\documentclass[11pt]{article}
\usepackage[a4paper, total={6in, 8in}]{geometry}
\usepackage{eurosym}
\usepackage{color}
\usepackage{amsmath}
\usepackage{amsfonts}
\usepackage{amssymb}
\usepackage{authblk}
\usepackage{amsthm}
\usepackage{dsfont}
\usepackage{mathtools}
\usepackage{scalerel,nicefrac}
\usepackage{tikz}
\usepackage{adjustbox}
\usepackage{bm}

\newtheorem{theorem}{Theorem}[section]
\theoremstyle{plain}
\newtheorem{lem}[theorem]{Lemma}
\newtheorem{cor}[theorem]{Corollary}
\theoremstyle{definition}
\newtheorem{rem}{Remark}
\newtheorem{dfn}{Definition}

\newtheorem*{con}{Convention}
\newtheorem{q}{Question}

\newcommand{\M}{\mathcal{M}}
\newcommand{\N}{\mathcal{N}}
\newcommand{\K}{\mathcal{K}}
\newcommand{\LL}{\mathcal{L}}
\newcommand{\NN}{\mathbb{N}}
\newcommand{\I}{\mathrm{I_{fix}}}
\newcommand{\G}{\mathrm{G}}
\newcommand{\E}{\mathrm{E}}
\newcommand{\A}{\mathrm{Aut}}
\newcommand{\Aa}{\A_{(I)}^{\M}(\K)}
\newcommand{\pa}{\mathrm{PA}}
\newcommand{\ssy}{\mathrm{SSy}}

\newcommand{\fix}{\mathrm{Fix}}
\newcommand{\LG}{\mathrm{LG}}

\newcommand{\KK}{\mathrm{K}}

\newcommand{\tp}{\mathrm{tp}}

\makeatletter
\def\moverlay{\mathpalette\mov@rlay}
\def\mov@rlay#1#2{\leavevmode\vtop{%
		\baselineskip\z@skip \lineskiplimit-\maxdimen
		\ialign{\hfil$\m@th#1##$\hfil\cr#2\crcr}}}
\newcommand{\charfusion}[3][\mathord]{
	#1{\ifx#1\mathop\vphantom{#2}\fi
		\mathpalette\mov@rlay{#2\cr#3}
	}
	\ifx#1\mathop\expandafter\displaylimits\fi}
\makeatother

\newcommand{\bigcupdot}{\charfusion[\mathop]{\bigcup}{\cdot}}

\begin{document}

\title{The automorphism group of countable recursively saturated models of Peano arithmetic and strong cuts}
\author{Saeideh Bahrami\footnote{
		School of Mathematics, Institute for Research in Fundamental Sciences (IPM), P.O. Box: 19395-5764, Tehran, Iran.\\
		This research was  supported by a grant from Iran National Science Foundation (INSF) (NO.99019371).} \\
	
	{\small {bahrami.saeideh@gmail.com}}}
\maketitle

\begin{abstract}
In this paper, we extend the concept of a Lascar generic automorphism in the setting of models of Peano arithmetic ($\pa$) to the subgroup of the automorphism group of a countable recursively saturated model $\M$ of $\pa$ that fixes pointwise a strong cut $I$ of $\M$, denoted by $(\A(\M))_{(I)}$. Then, we prove that:
\begin{itemize}
\item[(1)] $(\A(\M))_{(I)}$ has the small index property.
\item[(2)] The cofinality of $(\A(\M))_{(I)}$ is uncountable.
\item[(3)] Any nontrivial normal subgroup of  $(\A(\M))_{(I)}$ is meagre in it. In particular, the infinite cyclic group $\mathbb{Z}$ is not a homomorphic image of $(\A(\M))_{(I)}$.
\end{itemize} 
\end{abstract}

\section{Introduction}
In  early 1991, J. K. Truss introduced \cite{trusss} the concept of a \textit{generic}\footnote{The term "generic" is used because of the close connection between this notion and the ideas of forcing in set theory.} automorphism, aiming to identify an  element of a permutation group which 'almost' intersects with  all the finite partial isomorphisms of the underlying set. To be more precise, if $\G$ is a permutation group endowed with a Polish topological space (i.e., a  separable complete metric space),    an element $g$ of $\G$ is called \textit{generic} if its conjugacy class is \textit{comeagre} in  $\G$---that is, its conjugacy class is equal to the intersection of a countable family of open dense subsets of $\G$.  It is well-established that  the  automorphism group of any structure $ \M $, denoted by $ \A(\M) $, forms a topological group. Furthermore, when $ \M $ is countable, $ \A(\M) $ is a Polish group  \cite{km}. In his paper \cite{trusss}, Truss explored the existence and the properties of generic automorphisms of a number of homogeneous structures, such as a countable set with its symmetric group, $ (\mathbb{Q},<) $ which is the set of rational numbers with its ordering, and a countable universal $ C $-coloured graph. 

Concurrently, D. Lascar examined the automorphism group of various  structures in a series of papers (\cite{las1}, \cite{las2}, and \cite{las3}), and came out with a notion very similar to the concept of a generic automorphism introduced by Truss. Lascar’s notion is based on strong \textit{amalgamation} properties and plays an important role in showing the \textit{small index property} in a variety of structures (e.g., see \cite{hod} or \cite{las-sh}); where a topological group of cardinality $\kappa$ is said to have the \textit{small index property} if every subgroup whose index is less than $\kappa$ is open in the ground space.  So, the small index property guarantees that the topological  structure of a topological group can be reduced to its abstract group structure. Moreover, it is intimately connected to the extent to which a structure can be recovered from its automorphism groups. In $\cite{las-sh}$, Lascar and S. Shelah  showed that the automorphism group of any uncountable saturated structure have the small index property. Then in \cite{las}, Lascar modified their method to show that the automorphism group of every countable \textit{arithmetically saturated} model of Peano arithmetic ($\pa$)  has the small index property; where an \textit{arithmetically saturated} model of $\pa$ is a recursively saturated model in which the standard cut is \textit{strong} (for precise definitions and details, see the Preliminaries section below).

In section 3 of this paper, we first generalize the method Lascar used in \cite{las}. Then, we show that the Lascar's result on countable arithmetically saturated models of $\pa$ can be generalized to every automorphism group of a countable recursively saturated model of $\pa$ which pointwise stabilizes a strong cut of the ground model:
\newtheorem*{aaaa}{Theorem A}
\begin{aaaa}
Suppose $\M$ is a countable recursively saturated model of $\pa$ and $I$ is a strong cut of $\M$. Then the subgroup of $\A(\M)$ which fixes  $I$ pointwise, denoted by $(\A(\M))_{(I)}$,  has the small index property.
\end{aaaa}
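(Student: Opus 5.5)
We follow Lascar's strategy for arithmetically saturated models and replace the standard cut $\NN$ throughout by the strong cut $I$. Write $G=(\A(\M))_{(I)}$; it is a closed subgroup of $\A(\M)$, hence a Polish group. The basic open subgroups are the pointwise stabilizers $G_{(\bar a)}$ of finite tuples $\bar a$ from $\M$. The goal is: if $H\le G$ has index $<2^{\aleph_0}$, then $H\supseteq G_{(\bar a)}$ for some finite $\bar a$.

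\textbf{Step 1: Lascar generics for $G$.} Consider pairs $(\K,f)$ where $\K\prec\M$ is a finitely generated elementary submodel, or more generally a suitable small submodel coded in $\M$ and containing $I$ as a cut, and $f$ is a partial automorphism of $\K$ that fixes $I$ pointwise and extends to an element of $G$. We show these pairs have the joint embedding and amalgamation properties over $I$. Here strength of $I$ is essential. Given $\bar a,\bar b$ with $\tp(\bar a/I)$ realized compatibly, strength ensures that for every coded set $X$ the set $\{i\in I: (X)_i\cap I\ \text{bounded}\}$ is coded. With recursive saturation, this gives the back-and-forth criterion that $\tp(\bar a)=\tp(\bar b)$ together with agreement on $I$-indexed coded data yields some $g\in G$ with $g\bar a=\bar b$. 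This is the analogue of Kaye–Kossak–Kotlarski's characterization of $\A(\M)$ for arithmetically saturated $\M$, relativized to $I$. From amalgamation we obtain, by the Truss/Kechris–Rosendal criterion, a comeagre set of generic automorphisms in $G$. We also obtain the stronger statement Lascar needs: comeagrely many $n$-tuples $(g_1,\dots,g_n)$ are generic, with independent amalgamation.

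\textbf{Step 2: small index via the Lascar–Shelah argument.} Suppose $H$ has index $<2^{\aleph_0}$. Pick a generic pair and run a Cantor-scheme construction, a fusion over a binary tree using the completeness of $G$. We obtain $2^{\aleph_0}$ elements $g_\sigma$ such that $g_\sigma g_\tau^{-1}\notin H$ for $\sigma\ne\tau$, unless $H$ contains some $G_{(\bar a)}$. Equivalently, we show $H$ is non-meagre; by Pettis' theorem $H$ then contains an open neighborhood of the identity, so $H$ is open. Concretely, we follow Lascar's lemma that a non-open $H$ lets us split each finite approximation into two incompatible extensions, both realized inside $H$-cosets that differ. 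This step uses amalgamation over the finite base $\bar a$, and it requires that two generic automorphisms agreeing on a finitely generated model over $I$ are conjugate by an element of $G_{(\bar a)}$.

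\textbf{Main obstacle.} The hard part is the amalgamation/extension lemma of Step 1. We must show that a partial isomorphism between finitely generated elementary submodels, fixing $I$ pointwise, can be extended to an element of $G$. We also need a free amalgam of two such extensions over a common submodel, with the amalgam still fixing $I$. For $I=\NN$, Lascar used arithmetic saturation so that types over the standard system are coded. Here we first try replacing ``$\ssy(\M)$'' by the coded subsets of $I$, and we use strength of $I$ to code, uniformly in $\M$, the sets $\{i\in I:\M\models\varphi(i,\bar a)\}$ for $\varphi$ ranging over a coded $I$-indexed family. The back-and-forth then proceeds as in the $\NN$ case, and the amalgamation is carried out inside a recursively saturated elementary end extension argument. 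If this relativization fails in some place, the fallback is to use Kossak–Schmerl's description of $G_{(I)}$-orbits for strong $I$ as the substitute for the Kaye–Kossak–Kotlarski theorem.
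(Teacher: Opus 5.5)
\textbf{The gap is in Step 1.} You claim that the pairs $(\K,f)$, with $f$ an \emph{arbitrary} partial automorphism of a submodel (finitely generated over $I$) that extends to an element of $\G_{(I)}$, have joint embedding and amalgamation. You then invoke Truss/Kechris--Rosendal to get a comeagre conjugacy class, and even ample generics.

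Nothing in your sketch supports this, and strength of $I$ plus a back-and-forth will not give it. If true, it would show that every $\G_{(I)}$ with $I$ strong, including $\A(\M)$ for $I=\NN$, has a comeagre conjugacy class. By Schmerl's theorem quoted at the end of the paper, for arithmetically saturated $\M\not\models\mathrm{Th}(\NN)$ this is equivalent to the combinatorial statement $\mathrm{H}(n,4)$ holding in $\M$ for some $n$. For nonstandard strong $I$ the paper leaves it as an open question.

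The missing idea is Lascar's: only \emph{existentially closed} partial automorphisms are used as amalgamation bases, and one never amalgamates over anything else. The paper proceeds as follows.
\begin{itemize}
\item It works with $I$-small elementary submodels $\K=\{(\alpha)_i:i\in I\}$. These contain $I$, and $\G_{(K)}\supseteq\G_{(I\cup\{\alpha\})}$ is open.
\item It defines $I$-e.c.\ tuples in $\A^{\M}_{(I)}(\K)$.
\item The substantive use of strength is Theorem 3.4: every $a\mapsto b$ with $\tp(a,i)=\tp(b,i)$ for all $i\in I$ extends to an $I$-e.c.\ automorphism of some $I$-small $\K$. Strength keeps the inductively chosen witnesses $\Omega(i)$ inside $I$ and supplies $e>I$ making $p_e$ finitely satisfiable.
\item Lascar $I$-generics are defined by e.c.\ restrictions on cofinally many $I$-small submodels, plus realization of all extensions over e.c.\ bases.
\item These generics are comeagre (Theorem 3.6; density of $O(\K,\LL,f)$ uses the e.c.\ hypothesis essentially). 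The fibres over a generic tuple are comeagre (Theorem 3.7).
\item Two generics agreeing on an $I$-small $\K$ with $I$-e.c.\ restriction are conjugate by an element of $\G_{(K)}$ (Theorem 3.5).
\end{itemize}
Your Step 2 asks that generics agreeing on a finitely generated model over $I$ be conjugate. That omits exactly the e.c.\ hypothesis, and without it the claim is false.

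\textbf{Step 2 also contains an error.} You write ``equivalently, we show $H$ is non-meagre; by Pettis' theorem $H$ is open.'' This does not work. Every subgroup of index $<2^{\aleph_0}$ is already non-meagre (Lemma 3.1). Pettis' theorem needs $H$ to have the Baire property, which an arbitrary small-index subgroup need not have; otherwise every Polish group would have the SIP.

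The Cantor-scheme fusion is the right route, but each splitting step needs three inputs your Step 1 does not supply:
\begin{itemize}
\item a generic extension of the current tuple lying in a conjugate of $H$, obtained from small index plus comeagreness of the fibre;
\item a generic extension lying in $g\G_{(M_s)}\setminus H$, obtained from non-openness of $H$ (Lemma 2.8);
\item the conjugacy-over-e.c.-base theorem, to write the latter as $g^{h}$ with $h\in\G_{(M_s)}$ while preserving the earlier conjugates.
\end{itemize}
The paper isolates these as the axioms of a ``Lascar generic system'' (Theorem 3.3) and verifies them for $(\LG_I,\mathcal{S}_I,\{\mathrm{EC}_I(\K):\K\in\mathcal{S}_I\})$.
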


Section 4 of this paper is devoted to some  observations  concerning the introduced  notion of a Lascar generic automorphism for $(\A(\M))_{(I)}$, which we call a Lascar $I$-generic automorphism. We begin by examining the set of fixed points of a Lascar $I$-generic automorphism. Then, we generalize a theorem by R. Kaye \cite{k} which demonstrates a Galois-like correspondence between closed normal subgroups of the automorphism group of a countable recursively saturated model of $\pa$ and its \textit{invariant} cuts which are closed under exponentiation.  We show that if $I$ is a strong cut of $\M $, then  Kaye's result also holds for   $(\A(\M))_{(I)}$. As a consequence, we use this generalization along with the notion of Lascar $I$-generic automorphisms to derive a property for the normal subgroups of $(\A(\M))_{(I)}$:
\newtheorem*{bb}{Theorem B}
\begin{bb}
	Suppose $\M$ is a countable recursively saturated model of $\pa$ and $I$ is a strong cut of $\M$. Then for every  normal subgroup $\mathrm{N}$ of $(\A(\M))_{(I)}$ either $(\A(\M))_{(I)}=\mathrm{N} $ or $\mathrm{N} $ is  meagre in $(\A(\M))_{(I)}$.
	\end{bb}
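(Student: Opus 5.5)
We would prove Theorem B by combining the existence of Lascar $I$-generic automorphisms (Section 4) with a Baire category argument. Write $\G=(\A(\M))_{(I)}$; it is a closed subgroup of the Polish group $\A(\M)$, hence Polish. We assume the result of Section 3 that the set of Lascar $I$-generic automorphisms is comeagre in $\G$. We also assume that for any two Lascar $I$-generics $g,h$ there is a Lascar $I$-generic $f$ with $g=f_1 f_2$ type decompositions, i.e.\ that every element of $\G$ is a product of two generics. This is Lascar's standard fact that a comeagre set $C$ in a Polish group satisfies $C\cdot C=\G$: for any $x$, both $C$ and $xC^{-1}$ are comeagre, so they intersect.

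First step: reduce to the case where $\mathrm{N}$ is non-meagre and has the Baire property. By the small index property (Theorem A), any subgroup of countable index is open. In general, however, $\mathrm{N}$ need not have the Baire property, so we argue differently. If $\mathrm{N}$ is not meagre, then some coset (indeed $\mathrm{N}$ itself, since meagreness is translation invariant) meets the comeagre set of Lascar $I$-generics in a non-meagre way. In particular $\mathrm{N}$ contains a Lascar $I$-generic $g$.

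Second step: use the generic to swallow everything. The central property of a Lascar $I$-generic needed here is that any two Lascar $I$-generics with the same "type data" are conjugate in $\G$. If the generics form a single comeagre conjugacy class, then normality gives that $\mathrm{N}$ contains the whole comeagre class $C$, hence $\mathrm{N}\supseteq C\cdot C=\G$. If instead the generics are only conjugate within classes determined by extra data, we intend to use the Kaye-style Galois correspondence generalized in Section 4. Under it, the closure $\overline{\mathrm{N}}$ is a closed normal subgroup of $\G$ and corresponds to an invariant cut $J\supseteq I$ closed under exponentiation, with $\overline{\mathrm{N}}=\{f\in \G: f \text{ fixes } J \text{ pointwise}\}$. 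A Lascar $I$-generic moves points arbitrarily close above $I$: we expect to show, in the fixed-point analysis, that its fixed-point set has $I$ as its largest initial segment. So $g\in\mathrm{N}$ forces $J=I$, hence $\overline{\mathrm{N}}=\G$. To pass from $\overline{\mathrm{N}}$ to $\mathrm{N}$, we use that the conjugacy class of $g$ inside $\G$ is comeagre (genericity). Then $\mathrm{N}$ contains a comeagre set and therefore equals $\G$ by the $C\cdot C=\G$ argument.

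Main obstacle: showing that the Lascar $I$-generics form a \emph{single} comeagre conjugacy class in $\G$, or at least that a non-meagre $\mathrm{N}$ contains a comeagre conjugacy class. We would first try a back-and-forth argument. It would build a conjugating automorphism between two generics $g,h$ by extending finite partial maps $\bar a\mapsto\bar b$ that preserve types over $I$ together with the action of $g$ and $h$. Strongness of $I$ enters at two points. It is what makes the relevant types over $I$ coded in $\ssy_I(\M)$. It also makes the amalgamation property available for these finite approximations, mirroring Lascar's use of arithmetic saturation. If full conjugacy fails, the fallback is the Kaye correspondence route sketched above, whose weak point is the final passage from density of $\mathrm{N}$ to equality.
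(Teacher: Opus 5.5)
\paragraph{The gap.} It is the step you yourself flag as the main obstacle, and both of your routes depend on it: each needs the conjugacy class of a Lascar $I$-generic $g$ to be comeagre in all of $\G_{(I)}$. That is Truss genericity, not Lascar genericity, and nothing in the paper provides it.

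Theorem 3.5 makes two Lascar $I$-generics conjugate only when they agree on some $I$-small $\K\prec\M$ on which the common restriction is $I$-e.c. Generics with different such restrictions need not be conjugate. Whether $\G_{(I)}$ has a comeagre conjugacy class at all is left open in the paper (Question 3), and already for $I=\NN$ Schmerl's Theorem 5.4 shows it is a nontrivial condition on $\M$.

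Without that, your Galois-correspondence route stops at $\overline{\mathrm{N}}=\G_{(I)}$, and this carries no weight by itself. A strong cut is not of the form $\inf\{\log^{n}(a): n\in\omega\}$, so Theorem 4.5(3) makes $\G_{(>I)}$ dense in $\G_{(I)}$, yet $\G_{(>I)}$ is a proper (indeed meagre) normal subgroup. Your first step is fine: a nonmeagre $\mathrm{N}$ does meet the comeagre set $\LG_{I}$.

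\paragraph{The missing idea.} Use Theorem 3.5 locally. Fix $g\in\mathrm{N}\cap\LG_{I}$ and an $I$-small $\K$ with $g\upharpoonright_{K}$ $I$-e.c. Every $h\in\LG_{I}\cap g\G_{(K)}$ has $h\upharpoonright_{K}=g\upharpoonright_{K}$. So by Theorem 3.5 it is a $\G_{(K)}$-conjugate of $g$, and hence lies in $\mathrm{N}$. Thus the conjugacy class of $g$ is comeagre relative to the basic open set $g\G_{(K)}$, and that is all you need.

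The paper uses this by splitting on whether $\mathrm{N}$ is open:
\begin{itemize}
\item[(a)] If $\mathrm{N}$ is not open, apply Lemma 2.8 to the comeagre set $\LG_{I}$, the nonopen subgroup $\mathrm{N}$ and the open set $g\G_{(K)}$. It produces a generic $h\in g\G_{(K)}\setminus\mathrm{N}$, which contradicts the previous paragraph. Hence $\mathrm{N}\cap\LG_{I}=\emptyset$ and $\mathrm{N}$ is meagre.
\item[(b)] If $\mathrm{N}$ is open, it is closed, so $\mathrm{N}=\G_{(\I(\mathrm{N}))}$ by Theorem 4.5(4), and openness forces $\I(\mathrm{N})=I$.
\end{itemize}

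Your own route can be repaired the same way. The set $g^{-1}\LG_{I}\cap\G_{(K)}$ is contained in $\mathrm{N}$ and is comeagre in the Polish group $\G_{(K)}$. Running your $C\cdot C$ argument inside $\G_{(K)}$ then gives $\G_{(K)}\subseteq\mathrm{N}$. Hence $\mathrm{N}$ is open and therefore closed. Theorem 4.5(4), together with $\I(\mathrm{N})\subseteq\I(g)=I$ (Lemma 4.1), gives $\mathrm{N}=\G_{(I)}$.
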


In section 5, we investigate how $(\A(\M))_{(I)}$ can be covered by a chain of its proper subgroups. In other words, we compute the cofinality of $(\A(\M))_{(I)}$; where
 the \textit{cofinality} of a group $\G$ is the least cardinality of a well-ordered chain of proper subgroups whose union is $\G$. By generalizing a result of R. Kosssak and J. Schmerl \cite{ks}, we show that:
\newtheorem*{cc}{Theorem C}
\begin{cc}
	Suppose $\M$ is a countable recursively saturated model of $\pa$ and $I$ is a  cut of $\M$ which is not $\omega$-coded from above. Then $I$ is strong in $\M$ iff $ (\A(\M))_{(I)}$ has uncountable cofinality.
\end{cc}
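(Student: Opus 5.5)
We prove the two directions separately. Throughout, $G=(\A(\M))_{(I)}$ carries the topology of pointwise convergence, with basic open subgroups $G_{(\bar a)}$ for finite tuples $\bar a$ from $\M$.

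\emph{Strong $\Rightarrow$ uncountable cofinality.} We follow the Kossak--Schmerl argument for arithmetically saturated models, with $\omega$ replaced by $I$. Suppose $G=\bigcup_{n}G_n$ is an increasing chain of proper subgroups; we derive a contradiction in three steps.

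First, a Baire category argument shows that some $G_n$ is non-meagre. Since $G$ is Polish, it cannot be a countable union of meagre sets.

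Second, we show that this $G_n$ contains an open subgroup $G_{(\bar a)}$. A non-meagre subgroup need not have the Baire property, so Pettis' lemma does not apply directly. Instead we use the small index property of Theorem A together with the Lascar $I$-generics of Section 3, as in the Lascar--Shelah/Hodges route: the $G_n$ have countable index in the sense relevant there, and generic pairs let us absorb a basic open subgroup.

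Third, we show that $G_n$ is all of $G$. We need every $g\in G$ to lie in the subgroup generated by $G_{(\bar a)}$ and some fixed countable set. Using strength of $I$ (the key property: for coded sequences, the set of $i$ with $\mathrm{tp}$-relevant parameters landing above $I$ is coded), we show the following: for any $g$ there is $h\in G_{(\bar a)}$ with $g\,h$ sending $\bar a$ to a tuple of the same type over $I$ among countably many "representatives". Then the $G_m$ eventually contain the required finitely many coset representatives, because the chain is increasing. This gives $G_m=G$ for large $m$, a contradiction.

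\emph{Uncountable cofinality $\Rightarrow$ strong.} We argue contrapositively. Assume $I$ is not strong and not $\omega$-coded from above. Non-strength gives a coded sequence $\langle c_i\rangle$ whose pattern of lying in $I$ versus above $I$ is not coded. Together with the failure of $\omega$-coding from above, this lets us choose a decreasing sequence of cuts $J_n$ above $I$ with $\inf J_n = I$ that are "invariant-like" for the group. Set $G_n=(\A(\M))_{(J_n)}$. Each $G_n$ is a proper subgroup, and the $G_n$ increase. Their union is $G$ provided every automorphism fixing $I$ pointwise fixes some $J_n$ pointwise. This is the step that needs non-strength. The Kaye-style correspondence from Section 4 (fixed-point sets of elements of $G$ are cuts/elementary submodels) shows that $\fix(g)\cap$ the initial segment is a cut properly above $I$. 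The choice of $J_n$ coinitial above $I$ then does the job, and this coinitiality is exactly where "not $\omega$-coded from above" is used.

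\emph{Main obstacle.} The hardest part is the third step of the forward direction: extracting countably many representatives from strength. The delicate issue is that types over $I$ must be handled via coded sets, so that back-and-forth in $\M$ can realize them while fixing $I$. I would first try to mimic Kossak--Schmerl's proof verbatim, with $\mathbb{N}$ replaced by $I$. The point to check is that recursive saturation plus strength of $I$ gives the needed $I$-coded types.
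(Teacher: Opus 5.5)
There are genuine gaps in both directions, and in each case the missing piece is exactly where strength of $I$ has to be used. \emph{Forward direction.} Your Step 2 fails as stated. Theorem A only makes subgroups of index $<2^{\aleph_0}$ open, and nothing forces a member of a countable chain with union $\G_{(I)}$ to have small index. Non-meagreness does not help: such a subgroup need not have the Baire property, and Lemma 3.1 only gives ``meagre $\Rightarrow$ index $2^{\aleph_0}$'', not the converse. Your Step 3 is missing its key lemma. Knowing $G_n\supseteq\G_{(I\cup\{a\})}$ only gives countable index, i.e.\ countably many coset representatives, and an increasing chain need not eventually contain countably many prescribed elements. What you need is Lemma 5.1: for every $a$ there is $b$ with $\tp(a,i)=\tp(b,i)$ for all $i\in I$ and $\G_{(I)}=\langle\G_{(I\cup\{a\})}\cup\G_{(I\cup\{b\})}\rangle$. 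Consequently $\G_{(I)}=\langle\mathrm{H}\cup\{g\}\rangle$ for every open $\mathrm{H}$ and a single $g$. Constructing $b$ is where strength enters: witnesses of strength for the auxiliary functions make the relevant types over $I$ finitely satisfiable, and recursive saturation realizes them. The paper uses this lemma to show that no $H_n$ is open, since an open $H_n$ together with $g\in H_m$, $m\ge n$, would give $H_m=\G_{(I)}$. It then needs a separate argument, its perfect-tree construction of $2^{\aleph_0}$ automorphisms $h_\sigma$, to rule out a countable chain of non-open subgroups. Your proposal supplies neither ingredient.

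\emph{Converse direction.} Everything in your argument rests on the claim that for non-strong $I$ every $g\in\G_{(I)}$ satisfies $I\subsetneq\I(g)$, so that $\G_{(I)}=\G_{(>I)}$ is the union of the pointwise stabilizers of $[0,c_n)$ for a coinitial sequence $c_n>I$. This is asserted, not proved, and Section 4 does not provide it. Lemma 4.1 goes the other way (when $I$ is strong there are $g$ with $\I(g)=I$), and fixed-point sets are elementary submodels, not cuts. Also, ``not $\omega$-coded from above'' is not what produces the coinitial sequence: $M\setminus I$ has countable coinitiality in any countable model. The paper avoids this claim entirely, in two steps.
First, an open $\mathrm{H}$ has countable index, so $\G_{(I)}$ is the increasing union of the subgroups $\langle\mathrm{H}\cup\{g_0,\dots,g_n\}\rangle$. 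Uncountable cofinality therefore gives finite generation of $\G_{(I)}$ over every open subgroup.
Second, Remark 2 (the right-to-left half of Lemma 5.1) turns this into strength by overspill. Suppose $a$ codes $(a)_i$, $i\in I$, downward cofinal in $M\setminus I$ and lying outside $\KK(\M;I)$. Each generator $g_k$ satisfies $(g_k(a))_i=(a)_i$ whenever $(a)_i\in I$. Overspill gives $\mathbf{s}>I$ such that this holds for all $i<\mathbf{s}$ with $(a)_i<\mathbf{s}$. Then some $(a)_{i_0}\in(I,\mathbf{s})$ is fixed by every $g_k$ and by $\G_{(I\cup\{a\})}$, hence by all of $\G_{(I)}$. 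This is impossible because $I$ is not $\omega$-coded from above. If you want to keep your route, you must actually prove $\G_{(I)}=\G_{(>I)}$ for non-strong $I$. That is a substantial result in its own right, and it is where both hypotheses would have to be used.
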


\section{Preliminaries}
Throughout this paper, we mostly work in the language of Peano arithmetic, that is ${\mathbb{L}_{A}:=\{0,1,+,.,<\}}$. For every language $\mathbb{L}$ extending $\mathbb{L}_{A}$, we denote $\pa(\mathbb{L})$ by  $\pa^{*}$. Moreover, we use $\M$, $\K$, $\LL$,  and similar notations to refer to $\mathbb{L}$-structures (for a given language $\mathbb{L}$) where the universes  of these structures are represented by $M$, $K$, $L$, and etc.., respectively. 
 A given $\mathbb{L}$-structure $ \M $ is called  \textit{recursively saturated} if it realizes every recursive type with a finite number of parameters from  $ M $.  In \cite{bs}, J. Barwise and J. Schlipf showed that  \textit{any countable  model $ \M $ of $ \pa $ is recursively saturated iff it carries an \textit{inductive satisfaction class}}; here  an \textit{inductive satisfaction class} $  S $ of $ \M $ is a subset of $ M $ which contains pairs $ \langle\varphi,a\rangle $ such that (1) $ \M\models \mathrm{Form}(\varphi) $, (2) $ (\M; S)\models\pa^{*} $, and (3) $ (\M; S) $ satisfies Tarski's inductive conditions for satisfaction (for a more precise definition see \cite{kaye}).  In \cite{sm},  C. Smory\'{n}ski by generalizing   Barwise-Ressayre  expandability result, proved that   \textit{for every countable recursively saturated model $ \M $ of $ \pa $ there exists some inductive satisfaction class $  S $ such that $ (\M; S) $ is also recursively saturated}.  Moreover, For every model $ \M $ of $\pa$, we define the \textit{standard system} of $ \M $ as below:
 $$ \ssy(\M):=\{X\cap\NN: \ X \text{ is definable in } \M \}. $$ 
  \begin{theorem}[Smory\'{n}ski \cite{sm}]
 	Every two recursively saturated models of $ \pa $ are isomorphic iff they have the same theories and the same standard systems.
 \end{theorem}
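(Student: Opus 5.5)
The plan is a back-and-forth argument; the direction from isomorphism to equal invariants is immediate. If $f:\M\to\N$ is an isomorphism, then $\M$ and $\N$ have the same theory. For standard systems, take $X=\{n\in\NN:\M\models\varphi(n,\bar a)\}\in\ssy(\M)$. The standard part is fixed by $f$, so $X=\{n\in\NN:\N\models\varphi(n,f(\bar a))\}\in\ssy(\N)$. By symmetry the two standard systems coincide.

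For the converse, let $\M,\N$ be recursively saturated models of $\pa$ with $\mathrm{Th}(\M)=\mathrm{Th}(\N)$ and $\ssy(\M)=\ssy(\N)$. I will assume both are countable; this is the setting of the whole paper, and uncountable models of the same cardinality can fail to be isomorphic. Enumerate $M=\{m_0,m_1,\dots\}$ and $N=\{n_0,n_1,\dots\}$. I will build finite tuples $\bar a\in M$, $\bar b\in N$ with $\tp^{\M}(\bar a)=\tp^{\N}(\bar b)$. The chain starts from the empty tuples, which have the same type because the theories agree. At odd stages I add the next $m_i$ to $\bar a$, and at even stages the next $n_j$ to $\bar b$. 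The union of the resulting partial elementary maps is then a total, onto elementary map, hence an isomorphism.

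The key step is the forth step (the back step is symmetric). Suppose $\tp(\bar a)=\tp(\bar b)$ and $c\in M$; we must find $d\in N$ with $\tp(\bar a,c)=\tp(\bar b,d)$. Let $p(\bar x,y)=\tp^{\M}(\bar a,c)$, viewed via Gödel numbers as a set of natural numbers. First, $p\in\ssy(\M)$. Given a nonstandard $e\in M$, recursive saturation (overspill applied to the partial truth definitions $\mathrm{Sat}_{\Sigma_k}$) yields an element $s\in M$ coding, on its standard part, exactly the formulas true of $(\bar a,c)$. Concretely, the type $\{\ulcorner\varphi\urcorner\in s\leftrightarrow\varphi(\bar a,c):\varphi\}$ is recursive in parameters $\bar a,c$, so it is realized. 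Hence $p\in\ssy(\M)=\ssy(\N)$. Then pick $t\in N$ coding $p$, i.e.\ $p=\{\ulcorner\varphi\urcorner\in\NN : \N\models \ulcorner\varphi\urcorner\in t\}$, where $\in$ is the Ackermann membership relation.

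Now consider the type
$$q(y)=\{\ulcorner\varphi\urcorner\in t\rightarrow\varphi(\bar b,y):\varphi\ \text{a formula}\},$$
which is recursive with parameters $\bar b,t$. Any realization $d$ of $q$ satisfies every formula coded by $t$, namely $p(\bar b,y)$, so $\tp(\bar b,d)=p$ by completeness. It remains to see that $q$ is finitely satisfiable in $\N$. A finite subset of $q$ amounts to the conjunction $\psi$ of finitely many formulas $\varphi$ with $\ulcorner\varphi\urcorner\in t$, that is, with $\varphi\in p$. Since $\M\models\exists y\,\psi(\bar a,y)$, witnessed by $c$, this statement lies in $\tp(\bar a)=\tp(\bar b)$, so $\N\models\exists y\,\psi(\bar b,y)$. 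Finite formulas that are not in $t$ are vacuous implications. Recursive saturation of $\N$ then gives the required $d$.

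The main obstacle is the coding step: showing that the complete type of a finite tuple belongs to the standard system. This needs recursive saturation, not just $\pa$, since only the $\Sigma_k$ fragments are definable. I would handle it as sketched above, by realizing the recursive type in a variable $s$ asserting ``$\ulcorner\varphi\urcorner\in s$ iff $\varphi(\bar a,c)$''. Each finite piece of this type is satisfiable by $\pa$'s ability to code finite sets, so the whole type is realized by recursive saturation.
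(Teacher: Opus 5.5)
The paper gives no proof of this statement; it quotes it from Smory\'{n}ski. Your back-and-forth argument is the standard proof and is correct: you use recursive saturation to put complete types of finite tuples into the standard system, then realize the recursive type $q(y)$ in the other model. You are right to add countability, which the paper's formulation leaves implicit. The one step you leave unstated is that recursively saturated models are nonstandard, so every set in $\ssy(\N)$ (defined in the paper via definable sets) is coded by a single element $t$; this follows by overspill.
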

A model  $ \M $ is called  \textit{arithmetically saturated} if for every finite tuple $ \bar{a}\in M $, each arithmetic type in $ \tp(\bar{a}) $  is realizable in $ \M $. It is well-known that \textit{a recursively saturated model $ \M $ of $ \pa $ is arithmetically saturated iff the standard cut $ \NN $ is strong in $ \M $} (e.g., see \cite{wil}); where a \textit{strong} cut is defined as follows: a  cut $ I $ of a  model $ \M $ is called \textit{strong} if for every coded function $ f $ in $ \M $ whose domain contains $ I $ there exists some $ e>I $ such that $ f(i)\in I $ iff $ f(i)<e $ for all $ i\in I $. 
  
  For every $ X\subseteq M $, by $\KK(\M;X)$ we mean the set of all elements of $M$ which are definable in $\M$ with parameters from $X$. If $X=\{a\}$ for some $a\in M$, we simply denote $\KK(\M;X)$ by $\KK(\M;a)$. 

For every $\mathbb{L}$-formula $\phi(x) $ (where $\mathbb{L}$ is an extension of $\mathbb{L}_{A}$), we have the following $\mathbb{L}$-formula:
\begin{center}
	$y=\mu_{x} \ \phi(x): \ \ $ \textit{'$y$ is the least element such that $\phi(y)$ holds'}.
\end{center}
Moreover, the following  $\mathbb{L}_{A}$-formulas are available within $\pa$:
\begin{center}
\begin{itemize}
\item $x\E y: \ \ $ \textit{'$x$ is an Ackermann's member of $y$'}; in other words, \textit{'the $x$-th bit of the binary expansion of $y$ is 1'}. We might also say $x$ is an $\E$-member of $y$.
\item $(x)_{\E}=y: \ \ $ \textit{'$y$ is the code of the set of $\E$-members of $x$'}.
\item $x^{\frown}z=y: \ \ $ \textit{'$y$ is the sequence number obtained by adding $z$ at the end of the sequence coded by $x$'}.
\item $(x)_{y}=z: \ \ $\textit{'the $y$-th element of the sequence coded by $x$ is $z$'}.
\item $\mathrm{Card}(x)=y: \ \ $ \textit{'there exists a bijection between $y$ and the set coded by $x$'}.
\item 	$2_{0}^{x}:=x$ and
$2_{y+1}^{x}:=2^{2_{y}^{x}}$.
\item $\log^{0}x:=x$ and $\log^{y+1}x:=\log_{2}(log^{y}x+1)$.
\end{itemize}
\end{center} 
Suppose $I$ is a given cut of $\M$. A subset $X$ of $\M$ is called \textit{$I$-small} if $X=\{(a)_{i}: \ i\in I\}$ for some $a\in M$ such that $(a)_{i}\neq(a)_{j}$ for all distinct $i,j\in I$. If $I$ is the the standard cut of $\M$, we simply say $X$ is \textit{small} in $\M$.  Moreover,
we write $ \K\prec_{s}\M $ to indicate  that $ \K $ is a small elementary submodel of $ \M $.  Let $ \mathcal{S}(\M) $ and $\mathcal{S}_{I}(\M)$ be the family of small and $I$-small elementary submodels of $\M$, respectively; when there is no risk of ambiguity, we may simply write $\mathcal{S}$ and $\mathcal{S}_{I}$. Clearly,  $\mathcal{S} $ and $\mathcal{S}_{I}$ are countable, when $\M$ is countable. Any $I$-small elementary submodel of $\M$ may itself admit nontrivial automorphisms; thus,  for $I$-small elementary submodel  $ \K $ of $\M$, we  denote the family of automorphisms of $ \K $ which  can be extended to some automorphism of $ \M $ which stabilizes $I$ pointwise by $ \Aa $. Note that when $\M$ is countable, $\Aa$ is also countable. The next lemma summarizes some fundamental properties of $I$-small elementary submodels which we use in this paper:
\begin{lem}
	Suppose $\M$ is a recursively saturated model of $\pa$, $I$ is a strong cut of $\M$, and $\K=\{(a)_{i}: \ i\in I\}$ is an $I$-small elementary submodel of $\M$. Then:
	\begin{itemize}
		\item[(1)] \textup{(Bahrami \cite{me})}  $\KK(\M;I\cup\{\bar{b}\})$ is $I$-small in $\M$, for every finite tuple $\bar{b}\in M$.  
		\item[(2)] \textup{(Essentially Enayat \cite{ali})} $I$ is a subset of $K$.
		\item[(3)] \textup{(Bahrami \cite{me})} If $X$ is a subset of $K$ which is definable in $\M$ and there exists some $i_{0}\in I$ such that $i<i_{0}$ for every $(a)_{i}\in X$, then $X$ is definable in $\K$.
	\end{itemize}
\end{lem}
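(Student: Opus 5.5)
\textbf{Part (1).} I would realise $\KK(\M;I\cup\{\bar b\})$ as the range of a single coded sequence indexed by $I$. Its elements are the values $t_{\varphi}(\bar i,\bar b)=\mu_y\,\varphi(y,\bar i,\bar b)$, where $\varphi$ is a standard formula and $\bar i$ a finite tuple from $I$.
\begin{itemize}
\item[(i)] Since $I$ is strong, it is closed under exponentiation and hence under a coding of pairs and sequences. So each pair (formula, parameter tuple) gets a code $n\in I$.
\item[(ii)] By Smory\'nski's theorem quoted above, fix an inductive satisfaction class $S$ with $(\M;S)$ recursively saturated. In $(\M;S)$ define $g(n)=\mu_y\,S(\langle (n)_0,\langle y,(n)_1,\bar b\rangle\rangle)$. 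Then $g(n)\in\KK(\M;I\cup\{\bar b\})$ for every $n\in I$ coding a standard formula, and every such element is some $g(n)$.
\item[(iii)] Fix any $c>I$. Then $g\restriction c$ is a bounded $S$-definable object. By the standard coding argument in recursively saturated models (the sequence $\langle g(n):n<c\rangle$ realises a recursive type over $c,\bar b$ in $(\M;S)$), it is coded in $\M$ by some $a_0$.
\item[(iv)] Remove repetitions. Let $D=\{n<c:(a_0)_n\neq(a_0)_m \text{ for all } m<n\}$ and let $a$ enumerate $(a_0)\restriction D$ in increasing order of index.
\item[(v)] Check that the enumeration maps $I$ onto the first-occurrence indices lying in $I$. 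For $n\in I$ the position of $n$ in $D$ is $\mathrm{Card}(D\cap n)\le n$, so it stays in $I$. Conversely, the positions of elements of $D$ above $I$ must lie above $I$; here I would apply strongness to the coded function $n\mapsto\mathrm{Card}(D\cap n)$ to separate the two pieces.
\end{itemize}

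\textbf{Part (3).} Let $D=\{i<i_0:(a)_i\in X\}$. Since $X$ is definable in $\M$, $D$ is a definable bounded set, hence coded by some $d<2^{i_0}$. Because $I$ is strong, $2^{i_0}\in I$, and by (2) we get $d\in K$. It remains to see that the finite sequence $s=\langle (a)_i:i<i_0\rangle$ has a code in $K$. Then $X=\{(s)_i: i\,\E\, d\}$ is definable in $\K$ from the parameters $s$ and $d$.
\begin{itemize}
\item For the code of $s$, I would use that each $(a)_i\in K$. I would also use that $K$ is closed under the definable functions of $\M$, since $\K\prec\M$.
\item I would obtain $s$ by an overspill/elementarity argument. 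The set of $j\le i_0$ such that the code of $\langle (a)_i:i<j\rangle$ lies in $K$ is closed under successor, since adjoining one element uses the concatenation operation $x^{\frown}z$. Combined with (2), it should contain $i_0$.
\end{itemize}
This inductive step needs care, because the property "lies in $K$" is not definable in $\M$. I would instead run the induction inside $\K$ on bounded sets.

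\textbf{Part (2): the main obstacle.} This is where I expect the real difficulty, since nothing in the definition of $I$-smallness obviously forces indices to reappear as elements. My plan is as follows.
\begin{itemize}
\item[(i)] Strongness applied to $i\mapsto(a)_i$ gives $e>I$ with $(a)_i\in I$ iff $(a)_i<e$ for all $i\in I$. Then, for a suitable $c>I$, the coded set $\{(a)_i:i<c,\ (a)_i<e\}$ meets $I$ exactly in $K\cap I$.
\item[(ii)] Show that $K\cap I$ is an initial segment of $I$. For $y\in K\cap I$, elementarity of $\K$ lets me take in $K$ the elements $\mu_x$ ("$x\le y$ and $x$ is the $j$-th such") for $j$ ranging over $K$. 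I would then compare the count $\mathrm{Card}$ of the coded set below $y$ with $y$ itself.
\item[(iii)] Show that $K\cap I$ is cofinal in $I$. This follows since the injection $i\mapsto(a)_i$, restricted to the indices with values in $I$, cannot have its range bounded below some $m\in I$ while its domain meets every interval of $I$.
\end{itemize}
I expect step (ii) to be the hardest point. If it resists, my fallback is Enayat's original argument for the standard cut, relativised from $\NN$ to $I$ using the element $e$ supplied by strongness.
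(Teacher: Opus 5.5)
\textbf{Part (1).} Your enumeration is $\{g(n):n\in I\}$, but you only check $g(n)\in\KK(\M;I\cup\{\bar b\})$ when $(n)_0$ is a standard formula. Once $I\neq\NN$, $I$ also contains codes of nonstandard formulas. So $\{g(n):n\in I\}$ is really the set of elements $S$-definable from $I\cup\{\bar b\}$ by formulas with codes in $I$. After your step (iv) this is an $I$-small elementary submodel containing $I\cup\{\bar b\}$, but in general it is strictly larger than $\KK(\M;I\cup\{\bar b\})$.

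This cannot be repaired, because (1) is false for every nonstandard strong $I$. The paper only cites (1), remarking that an $I\Sigma_1$ argument adapts. Take $\M$ countable and $S$ as in your step (ii).

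\emph{Step 1: every $I$-small $X=\{(y)_j:j\in I\}\prec\M$ realizes every type over a finite $\bar c\subseteq X$ that is realized in $\M$.} Fix $d\in M$, a nonstandard $\kappa\in I$, and $s_0>I$. For $k\le\kappa$ let $\sigma(k)$ be the least $j<s_0$ such that $(y)_j$ and $d$ satisfy, according to $S$, the same ones among the first $k$ formulas with parameters $\bar c$; put $\sigma(k)=s_0$ if there is none. Then $\sigma$ is coded, and $\sigma(k)\in I$ for standard $k$ because $X\prec\M$. If $\sigma(k)\in I$ for some nonstandard $k$, then $(y)_{\sigma(k)}\in X$ realizes $\tp(d/\bar c)$. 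Otherwise the strength witness $e$ for $\sigma$ would make $\omega=\{k\le\kappa:\sigma(k)<e\}$ definable, which is impossible.

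\emph{Step 2: a type over $b>I$ omitted by $\KK(\M;I\cup\{b\})$.} The recursive type $\{x>\log^k b:k\in\omega\}\cup\{\forall j<\log x\,(\mu_y\varphi(y,j,b)\neq x):\varphi\text{ standard}\}$ is finitely satisfiable by counting. Since $I$ is closed under exponentiation, every realization satisfies $I<\log x$. Hence a realization equals none of the elements $\mu_y\varphi(y,j,b)$ with $j\in I$, so it lies outside $\KK(\M;I\cup\{b\})$.

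By the two steps, $\KK(\M;I\cup\{b\})$ is not $I$-small. Likewise $\KK(\M;I)$ is $I$-small only if $I\prec\M$ (otherwise take $b\in\KK(\M;I)\setminus I$). For $I=\NN$ your argument is the classical one and works.

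Separately, step (v) does not need strength, which only speaks about arguments in $I$. What you need is that $I$ is contained in the range. Then the first occurrences of $0,\dots,j$ have an attained maximum, and that maximum lies in $I$.

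\textbf{Parts (2) and (3).} Your step (i) is unjustified. Values $(a)_i$ with $I<i<c$ may lie in $I\setminus K$, and strength for $i\mapsto(a)_i$ says nothing about them. Steps (ii)--(iii) are never carried out.

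The missing device is to apply strength to the least-index function $\rho(x)=\min\{j<s_0:(a)_j=x\}$, with default value $s_0$. For $x\in I$ one has $\rho(x)\in I$ iff $x\in K$. So, with the strength witness $e$, the set $K\cap[0,m)=\{x<m:\rho(x)<e\}$ is definable for each $m\in I$. If it omitted some $x<m$, the least such $x_0$ would be positive with $x_0-1\in K$. This contradicts the fact that $K\prec\M$ is closed under successor, and gives (2) with no counting or cofinality argument.

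The same device legitimizes your induction in (3). Let $\lambda(j)$ be the least index of $\langle(a)_i:i<j\rangle$ in $a$, and let $e'$ be its strength witness. Then $\{j\le i_0:\langle(a)_i:i<j\rangle\in K\}=\{j\le i_0:\lambda(j)<e'\}$ is definable in $\M$. It contains $0$ and is closed under successor by concatenation. Induction in $\M$ (not in $\K$, which cannot see $a$) then gives $\langle(a)_i:i<i_0\rangle\in K$, and the rest of your argument for (3) goes through.
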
 
\begin{rem}
Although parts (1) and (3) of Lemma 2.2 are stated and proved in \cite{me} for models of  $\mathrm{I}\Sigma_{1}$, they can be adapted for recursively saturated  models of  $\pa$. 
\end{rem}

Every countable recursively saturated model $ \M $ of $ \pa $ has a rich class of automorphisms, denoted by $ \A(\M) $. 
In this paper, we usually refer to $\A(\M)$ by $\G$ for simplicity. $\G$ forms a topological group whose  basic open sets are cosets of subsets of the form ${\G_{\bar{a}}:=\{g\in\G: \ g(\bar{a})=\bar{a} \}}  $, where $ \bar{a}$ is a finite tuple of elements of $ M $. In particular, $\G$ is a Polish group (see \cite{km} for details). For each $g,h\in\G$, by $[g]^{\G}$ we mean the conjugacy class of $g$ in $\G$, and $g^{h}:=h^{-1}gh$. For every automorphism $ g $ of $ \M $, let $ \I(g) $ be the largest initial segment of $ \M $ whose elements stay fixed by $ g $, and $ \fix(g) $ is the set of all fixed points of $ g $.  In \cite{kkk} it is shown that if $I$ is a strong cut of $\M$, then $\KK(\M;I)$ can appear as the fixed point set of some automorphism of $\M$:
\begin{theorem}[Kaye-Kossak-Kotlarski \cite{kkk}]
Suppose $\M$ is a countable recursively saturated model of $\pa$. Then $\NN$ is strong in $\M$ iff there exists some automorphism $g$ of $\M$ such that $\fix(g)=\KK(\M)$. In particular, J. Schmerl has noted in \cite[Theorem 5.7]{kkk} that the same proof can be adapted to show that: a cut $I$ of $\M$ is strong in $\M$ iff for every $I$-small elementary submodel $\K$ of $\M$ there exists some automorphism  $g$ of $\M$ such that $\fix(g)=K$.
\end{theorem}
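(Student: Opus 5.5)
We follow the proof of Kaye, Kossak and Kotlarski for $I=\NN$ and replace ``small'' by ``$I$-small'' throughout, as Schmerl indicated. Everything is reduced to a back-and-forth construction of $g$ through finite partial maps. The recursive saturation of $\M$, with Smory\'{n}ski's expandable satisfaction class $S$, supplies realizations of the types needed. Strongness of $I$ is used at exactly one point: it makes the relevant types over $K$ \emph{coded}, so that a single finite parameter can represent all of $K$.

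\textbf{Forward direction.} Assume $I$ is strong and $\K=\{(a)_i:\ i\in I\}$ is $I$-small. We enumerate $M=\{m_n\}$ and build finite partial elementary maps $p_0\subseteq p_1\subseteq\cdots$ over $K$, meaning that each $p_n\cup\mathrm{id}_K$ is elementary in $(\M;a)$ in the sense of types over the parameter $a$ with indices in $I$. At stage $n$ we do three things:
\begin{itemize}
\item[(i)] put $m_n$ into the domain of $p_n$ (forth step);
\item[(ii)] put $m_n$ into the range of $p_n$ (back step);
\item[(iii)] if $m_n\notin K$, extend so that $m_n$ is moved.
\end{itemize}
The key lemma needed is the following. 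Suppose $\bar b\mapsto\bar b'$ is a type-preserving map over $K$ and $d\notin K$. Then there are $d',d''$ such that $\bar b d\mapsto\bar b' d'$ preserves types over $K$ and $d''\neq d$ (and similarly with $d$ on the range side).

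To prove the lemma, let $\tau(x,\bar b)=\{\phi(x,\bar b,(a)_i):\ i\in I\}$ be the type of $d$. Using $S$, this type is given by a coded sequence $\langle\phi_j:\ j<c\rangle$ for some $c>I$ in $\M$. The strongness of $I$, applied to the coded function sending $j$ to the least index at which the coded truth of $\phi_j$ changes, gives an $e>I$. With this $e$, the portion of the type indexed below $e$ agrees with the true type on $I$. Recursive saturation of $(\M;S)$ then realizes the corresponding type over $\bar b'$, which yields $d'$.

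To obtain $d''\ne d$, we use that $d\notin K$. If the only realization were $d$ itself, then $d$ would be definable from $\bar b$, $a$ and finitely many indices, and by Lemma 2.2(3) (the part that handles bounded indices) this would place $d$ in $K$. Hence the coded type has a second realization, again found by recursive saturation. The union of the $p_n$ together with $\mathrm{id}_K$ is an automorphism $g$ of $\M$. It fixes $K\supseteq I$ pointwise, using Lemma 2.2(2), and it moves every element outside $K$. Therefore $\fix(g)=K$.

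\textbf{Converse.} Assume $I$ is not strong, witnessed by a coded $f$ with code $c$. Let $\K=\KK(\M;I\cup\{c\})$; this is an elementary submodel and it contains $I$. The required $I$-smallness of $\K$ is not automatic from Lemma 2.2(1), which assumes $I$ strong. We therefore argue for it directly, using an $S$-definable enumeration of Skolem terms indexed below a nonstandard element, cut down to $I$.

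Suppose $g$ were an automorphism with $\fix(g)=K$. The values $f(i)>I$ with $i\in I$ lie in $K$ and are coinitial above $I$. Choose any $x\notin K$ lying just above $I$. We then try to show that $g$ must fix $x$, by squeezing $x$ between fixed elements and using that $\fix(g)$ is an elementary submodel closed under the relevant $\mu$-operators. This is the contradiction we aim for.

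\textbf{Main obstacle.} I expect the converse, and in particular the construction of a suitable $I$-small $\K$ when $I$ is not strong, to be the main difficulty. If the squeezing argument fails, I would first try the original Kaye–Kossak–Kotlarski argument for $\NN$ with indices ranging over $I$. That argument extracts a code for $\{i\in I:\ f(i)\in I\}$ from the behaviour of $g$ on a $\mu$-definable element, and so produces the $e$ that strongness requires.
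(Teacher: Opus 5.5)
The paper gives no proof of this theorem; it is quoted from Kaye--Kossak--Kotlarski together with Schmerl's remark. Your sketch therefore has to stand on its own, and it has genuine gaps in both directions.

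\textbf{Forward direction.} You misplace the use of strongness. For $I=\NN$, preserving types over a small $\K$ needs only a satisfaction class and overspill, with no strongness at all. The real difficulty is step (iii), which cannot be carried out stage by stage. Once $\bar b\mapsto\bar b'$ is chosen, every element $t(\bar b,k)$ of $\KK(\M;K\cup\{\bar b\})$ has the forced image $t(\bar b',k)$. If, say, $\lfloor\log b_1\rfloor=\lfloor\log b_1'\rfloor\notin K$, that element is fixed forever, and no later extension can move it.

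You therefore need an invariant: the induced map on $\KK(\M;K\cup\{\bar b\})$ fixes nothing outside $K$. When adding $d$, you must realize, besides the type over $K$, every condition $t(\bar b,d,k)\neq t(\bar b',x,k)$ for Skolem terms $t$ and $k\in K$ with $t(\bar b,d,k)\notin K$. This family is not recursive, because it depends on which values fall into $K$. Replacing ``$\in K$'' by ``equals some $(a)_j$ with $j<e$'', for a strength witness $e>I$, is where strongness is really needed.

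Your key lemma does not address this, and its justification is invalid. Lemma 2.2(3) concerns definable \emph{subsets} of $K$ with bounded indices. Elements definable from $\bar b$, $a$ and indices in $I$ typically lie outside $K$ (e.g.\ $a$ itself, or $b_1+1$).

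Make sure the maps preserve types over $K$ only, not over $a$. Otherwise $g(a)=a$, yet $a\notin K$ once $K\neq I$. To see this, suppose $a\in K$ and let $s$ code $\{(a)_i:\ i<m\}$ for some $m\in K\setminus I$. Then $s\in K$ and $\K\models\forall x\,(x\,\E\,s)$, which is absurd.

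\textbf{Converse.} The approach as proposed does not work. Squeezing gives nothing: an automorphism can fix $K$ and a set coinitial above $I$ while moving points in between. The $I$-smallness of $\KK(\M;I\cup\{c\})$ is only asserted. The enumeration you gesture at would have to discard the values produced by nonstandard formulas, and you give no mechanism for this without a strength witness.

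What works is your fallback idea, made precise, and $c$ need not lie in $K$. Let $\K=\{(\alpha)_j:\ j\in I\}\supseteq I$ be $I$-small with $\fix(g)=K$, and let $c$ code $f$. Put $(b)_i:=\min\bigl(M\setminus\{(\alpha)_j:\ j<(c)_i\}\bigr)$.
If $(c)_i\in I$, then $(b)_i\leq(c)_i$, so $(b)_i\in I\subseteq K$. If $(c)_i>I$, then $(b)_i\notin K$.
Since $g$ fixes $I$, we have $(g(b))_i=g((b)_i)$ for $i\in I$. Hence the coded set $E=\{n:\ (b)_n=(g(b))_n\}$ satisfies $E\cap I=\{i\in I:\ f(i)\in I\}$.
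Overspill applied to ``$\forall n<m\,(n\notin E\rightarrow(c)_n>m)$'' yields some $m_0>I$. Then $e:=\min(\{(c)_n:\ n<m_0,\ n\notin E\}\cup\{m_0\})$ witnesses strength for $f$.

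Taking $\K=\KK(\M)$, which is small in a recursively saturated model, this settles the case $I=\NN$. For general $I$ you still have to produce some $I$-small elementary submodel containing $I$ when $I$ is not strong; otherwise the right-hand side is vacuous.
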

A given cut $ I $ is called \textit{invariant} if $ g(I)=I $ for all $ g\in\G $. It is shown in \cite{kkk} that \textit{if $\M$ is a countable recursively saturated model of $\pa$, then a cut $I$ of $\M$ is invariant iff $\KK(\M)\cap I$ is cofinal in $I$ or $\KK(\M)\cap (M\setminus I)$ is downward cofinal in $M\setminus I$.}
For every subgroup $ \mathrm{H} $ of $ \G $, let $ {\I(\mathrm{H}):=\bigcap_{g\in \mathrm{H}} \I(g)} $. It is easy to see that  \textit{if $ H $ is normal in $ \G $, then $ \I(H) $ is invariant}.\\
 For every subset $X$ of $M$, $\G_{(X)}$ is the subgroup of $\G$ which fixes $X$ pointwise. Moreover, for a given cut $ I $ of $ \M $, we define:
\begin{center}
$ \G_{(>I)}:=\{g\in\G: \ I\subsetneqq\I(g) \}. $
\end{center}
It is easy to see that when $I$ is an invariant cut of $\M$, then both $\G_{(I)}$ and $\G_{(>I)}$ are normal subgroups of $\G$. Moreover, Kaye has proved that every closed normal subgroup of $\G$ is of the form $\G_{(I)}$  for some invariant cut $I$ of $\M$ which is closed under exponentiation:
\begin{theorem}[Kaye \cite{k}]
	Suppose $ \M $ is a countable recursively saturated model of $ \pa $ and $\G:=\A(\M)$. Then:
		\begin{itemize}
			\item[(1)] If $I=\inf \{\log^{n}(a): \ n\in \omega\} $ for some $a>I$, then  $\G_{(>I)}$ is equal to  $\G_{(I')} $, where $ I':=\sup \{2_{n}^{a}: \ n\in \omega\}$\footnote{For every subset $X$ of $M$ we have	$\sup(X):=\{a\in M: \ a\leq x \text{ for some  } x\in X\}$, and $\inf(X):=\{a\in M: \ a< x \text{ for all  } x\in X\}$.}. If $I$ is a cut of $\M$  not of the above form, then the closure of $\G_{(>I)}$  equals to $\G_{(I)} $.
		\item[(2)] For every closed normal subgroup $\mathrm{N}$ of $\G$, it holds that $\mathrm{N}=\G_{(\I(\mathrm{N}))}$.
	\end{itemize} 
\end{theorem}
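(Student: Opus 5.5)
The statement splits into two parts. Part (1) is a density/identification statement about $\G_{(>I)}$; part (2) will be derived from part (1) together with a conjugation argument. Throughout I use one elementary fact. If $g$ fixes $[0,c]$ pointwise, then $g$ fixes $[0,2^{c})$ pointwise, because every $x<2^{c}$ codes a subset of $c$ and is determined by its $\E$-members, all of which are fixed. Consequently $\I(g)$ is always closed under exponentiation.

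\emph{Part (1), first case.} Suppose $I=\inf\{\log^{n}(a):n\in\omega\}$ with $a>I$. Since $I'\supsetneq I$, we get $\G_{(I')}\subseteq\G_{(>I)}$. Conversely, let $g\in\G_{(>I)}$ and pick $b\in\I(g)\setminus I$. Then $b>\log^{n}(a)$ for some $n$, hence $2^{b}_{n}\geq a$. Iterating the fact above, and then closing under further exponentiation, gives $\I(g)\supseteq\sup\{2^{a}_{m}:m\in\omega\}=I'$. So $g\in\G_{(I')}$.

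\emph{Part (1), second case.} Since $\G_{(I)}$ is closed and contains $\G_{(>I)}$, only density needs proof. If $I$ is not closed under exponentiation, then every $g\in\G_{(I)}$ already fixes the exponential closure of $I$, which is strictly larger than $I$, so $\G_{(I)}=\G_{(>I)}$. So assume $I$ is exp-closed and not of the $\log$-form. Fix $g\in\G_{(I)}$ and a tuple $\bar a$. I must find $h\in\G_{(>I)}$ with $h(\bar a)=g(\bar a)$.
\begin{itemize}
\item[(a)] Fix an inductive satisfaction class $S$ with $(\M;S)$ recursively saturated. Since $g$ fixes $I$ pointwise, $\bar a$ and $g\bar a$ satisfy the same formulas of code $<i$ with parameters $<i$, for every $i\in I$. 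This agreement is expressible in $(\M;S)$, so overspill yields $c>I$ for which it holds below $c$.
\item[(b)] Because $I$ is not of the $\log$-form, there is $e>I$ with $e<\log^{n}(c)$ for all $n\in\omega$. Hence $2^{e}_{n}<c$ for all $n$.
\item[(c)] Run a back-and-forth inside $(\M;S)$, using recursive saturation, starting from $\bar a\mapsto g\bar a$. The invariant maintained is that the two current tuples have the same type over $[0,e']$ for some $e'$ which is still above $e$ after finitely many steps. The margin $2^{e}_{n}<c$ absorbs the finitely many exponential losses incurred at each step.
\end{itemize}
The resulting automorphism $h$ fixes $[0,e]$ pointwise and sends $\bar a$ to $g\bar a$, so $h\in\G_{(>I)}$. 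Step (c) is the technical heart of Part (1).

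\emph{Part (2).} Let $\mathrm{N}$ be a closed normal subgroup and $J:=\I(\mathrm{N})$. Then $J$ is invariant and exp-closed, and $\mathrm{N}\subseteq\G_{(J)}$. By Part (1) and closedness of $\mathrm{N}$, it suffices to show $\G_{(>J)}\subseteq\mathrm{N}$. In the $\log$-case one instead shows $\G_{(J)}\subseteq\mathrm{N}$ directly, which is the same argument with $J'$ in place of $J$. Since $\mathrm{N}$ is closed, it is enough to show that for $h\in\G_{(>J)}$ and any tuple $\bar a$, some element of $\mathrm{N}$ agrees with $h$ on $\bar a$. By the choice of $J$, there are elements $f\in\mathrm{N}$ whose $\I(f)$ lies arbitrarily close to $J$ from above, and therefore below $\I(h)$. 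I would then use conjugates $f^{k}$, $k\in\G$, and products of such conjugates, to produce an element of $\mathrm{N}$ matching $h$ on $\bar a$. Concretely, choose $k$ via recursive saturation so that $f^{k}$ moves $\bar a$ along a prescribed path realizing $h\bar a$; since $\mathrm{N}$ is normal, all such elements lie in $\mathrm{N}$.

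I expect this last step to be the main obstacle. It requires a combinatorial "moving" lemma: if $f$ moves some element below $\I(h)$, then suitable conjugates of $f$ can realize any type-preserving map fixing $[0,\I(f)]$. I would try to prove it by another back-and-forth in $(\M;S)$, exploiting the fact that $f$ moves points arbitrarily low above $J$.
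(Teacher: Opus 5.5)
The paper only quotes this theorem from Kaye. Its generalization (Theorem 4.5 with Lemmas 4.5.1--4.5.2) shows what the proof requires, and measured against that your proposal has two genuine gaps.

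\textbf{Step (a) of Part (1).} An automorphism $g$ need not preserve the satisfaction class $S$. So $g\in\G_{(I)}$ does not give you $S$-agreement of $\bar a$ and $g\bar a$ on formulas of nonstandard code below a nonstandard $i\in I$. You only know agreement on each standard formula with parameters in $I$. For each $n\in\omega$ this yields a point $j_n>I$, namely the least parameter where $\bar a$ and $g\bar a$ disagree on one of $\bm{\phi}_0,\dots,\bm{\phi}_n$. A single $c>I$ exists only if $\inf_n j_n\supsetneqq I$. That strict inclusion is Kaye's Lemma 2.7, which the paper invokes in Lemma 4.5.1 to get $I'\subsetneqq J$, and it needs its own argument. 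One way:
\begin{itemize}
\item[(i)] $g(j_n)\neq j_n$ for every $n$: apply $g$ to the disagreement at $j_n$.
\item[(ii)] Suppose $\inf_n j_n=I\neq\NN$. Recursive saturation gives a non-increasing coded sequence $e$ of nonstandard length with $(e)_n=j_n$ for $n\in\omega$.
\item[(iii)] The entries of $e$ at nonstandard indices then lie in $I$, so $g$ fixes them.
\item[(iv)] Hence for a nonstandard $N\in I$, the definable set $\{n<N:(e)_n\neq(g(e))_n\}$ contains only standard numbers, so it is finite. This contradicts (i).
\end{itemize}
For $I=\NN$, recursive saturation gives $c$ directly. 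Once this is supplied, (b) and (c) go through, as do your first case and the non-exp-closed case.

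\textbf{The moving lemma in Part (2).} You leave it open, and the mechanism you sketch does not work. A single conjugate of $f$ sending $\bar a$ to $h\bar a$ would need some $x$ with $\tp(x,f(x))=\tp(\bar a,h\bar a)$, and nothing provides one. The argument the paper adapts from Schmerl uses two conjugates:
\begin{itemize}
\item[(i)] Code $\bar a$ as one element $a$, say with $a<h(a)$. Replace $f$ by $f^{-1}$ if necessary, so that $f$ moves arbitrarily small elements above $\I(f)$ downward.
\item[(ii)] Find $\mathbf{u}<\mathbf{v}<\mathbf{w}$ with $f(\mathbf{v})=\mathbf{u}$, $\tp(\mathbf{u},\mathbf{v})=\tp(\mathbf{u},\mathbf{w})$ and $\tp(\mathbf{v},\mathbf{w})=\tp(a,h(a))$.
\item[(iii)] Take $\sigma(\mathbf{u},\mathbf{v})=(\mathbf{u},\mathbf{w})$ and $\rho(\mathbf{v},\mathbf{w})=(a,h(a))$. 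Then $\rho\sigma f^{-1}\sigma^{-1}f\rho^{-1}$ is a product of conjugates of $f^{-1}$ and $f$, so it lies in $\mathrm{N}$, and it sends $a$ to $h(a)$.
\end{itemize}
Constructing $\mathbf{v}$ is the substantive work. It is chosen deep in a definable list (the terms $l_r(s,x)$ of Lemma 4.5.1), built so that every earlier member of the list can serve as $\mathbf{u}$. One also needs some $\mathbf{r}$ with $f(\mathbf{r})<\mathbf{r}$ inside the agreement cut of $(a,h(a))$, which again rests on Kaye's Lemma 2.7.

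Your treatment of the $\log$-case is also off. The real point is that $\I(\mathrm{N})=\I(f)$ for some $f\in\mathrm{N}$; otherwise every $f\in\mathrm{N}$ fixes $J'$ by Part (1), and $\I(\mathrm{N})\supseteq J'$. The moving lemma applied to that $f$ then gives $\G_{(J)}\subseteq\mathrm{N}$.
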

Theorem 2.4 implies that the nontrivial closed normal subgroups (if they exists) are linearly ordered. Moreover, since every open subgroup of $\G$ is also closed, from the Kaye's Theorem we infer  that nontrivial normal subgroups are nonopen.  In \cite{k}, Kaye also conjectured  that  every nonclosed normal subgroup is of the form $\G_{(>I)}$ where $I$ is an invariant cut of $\M$ which is closed under exponentiation; this conjecture remains unproven.

As mentioned in the Introduction section, we  say  a  topological group $ \G $ of cardinality $2^{\aleph_{0}}$ has the \textit{small index property} (abbreviated by the \textit{SIP}) if every subgroup $ \mathrm{H} $ of $\G$ whose index is less than $ 2^{\aleph_{0}} $ is open in $ \G $. In \cite{las}, Lascar introduced a version of the notion of generic automorphism---now commonly referred to as a \textit{Lascar generic automorphism} in the literature on Peano arithmetic (Definition 2 below)---and used it to prove that:
\begin{theorem}[Lascar \cite{las}]
	The automorphism group of every countable arithmetically saturated model of $ \pa $ has the small index property.
\end{theorem}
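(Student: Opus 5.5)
This theorem is Lascar's result, and the plan is to reproduce his argument in the special case $I=\NN$. It is the template that Section 3 later generalizes, so the steps should be phrased so that they survive replacing $\NN$ by a strong cut $I$. Throughout, $\M$ is countable and arithmetically saturated and $\G=\A(\M)$. The goal is to show that every subgroup $\mathrm{H}\le\G$ of index $<2^{\aleph_0}$ contains some $\G_{\bar a}$.

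\textbf{Step 1: generic automorphisms.} Work with the countable family $\mathcal{S}$ of small elementary submodels $\K\prec_s\M$, together with the countable sets $\A^{\M}(\K)$ of automorphisms of $\K$ that extend to $\M$. Call $g\in\G$ \emph{Lascar generic} if two conditions hold. First, $g$ is a union of a chain $g\restriction\K_n$ with $\K_n\in\mathcal{S}$, $\K_n$ exhausting $M$, and each $g\restriction\K_n$ invariant. Second, $g$ satisfies the extension property: for any $\K\subseteq\K'$ in $\mathcal{S}$ with $g(K)=K$, and any $f'\in\A^{\M}(\K')$ extending $g\restriction\K$, some $\K''\supseteq\K'$ in $\mathcal{S}$ is $g$-invariant and $g\restriction\K'$ is conjugate to $f'$ over $K$ by an automorphism of $\M$ fixing $K$. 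The key amalgamation lemma says the following: given $\K\subseteq\K_1,\K_2$ in $\mathcal{S}$ and $f_i\in\A^{\M}(\K_i)$ agreeing on $K$, there is $\K_3\in\mathcal{S}$ with $f_3\in\A^{\M}(\K_3)$ into which both $(\K_i,f_i)$ embed over $(\K,f_1\restriction K)$. This is where strength of $\NN$ enters, through the Kaye--Kossak--Kotlarski Theorem 2.3. We realize the relevant types using arithmetic saturation, code the union as a small set using Lemma 2.2(1), and extend automorphisms by back-and-forth inside $\mathrm{SSy}$-coded types. A Baire-category / back-and-forth argument over the countably many finite configurations then shows that generics exist, that they form a comeagre set, and that any two generics $g,g'$ with $g\restriction\K\cong g'\restriction\K$ are conjugate by an element fixing $K$.

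\textbf{Step 2: generic tuples and the Lascar--Shelah argument.} Next consider tuples $(g_1,\dots,g_n)$ that are generic in the analogous sense, with a common exhausting chain of small submodels. Using the amalgamation lemma again, such tuples also form a comeagre set in $\G^n$. Suppose $\mathrm{H}$ has index $<2^{\aleph_0}$. The main claim is that some $\K\in\mathcal{S}$ and some generic $g$ satisfy: every generic $h$ with $h\restriction\K=g\restriction\K$ and $h(K)=K$ has $h^{-1}g\in\mathrm{H}$. To prove this, suppose it fails for every $\K$. Then build, by a binary tree construction of depth $\omega$ through the $g$-invariant small submodels, $2^{\aleph_0}$ generics $g_\sigma$, $\sigma\in 2^\omega$, whose pairwise quotients $g_\sigma^{-1}g_\tau$ lie outside $\mathrm{H}$. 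At each splitting node, the failure hypothesis provides two extensions whose quotient avoids $\mathrm{H}$. These extensions are only finitely determined, so genericity is kept along branches by interleaving the countably many extension requirements, and the limits are automorphisms because the submodels exhaust $M$. This gives $2^{\aleph_0}$ distinct cosets, a contradiction.

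\textbf{Step 3: conclusion, and the main obstacle.} From the claim, every generic $h$ that agrees with $g$ on $K$ lies in $g\mathrm{H}$. Any $f\in\G_{(K)}$ can be written as a product of two such elements, $f=h_1^{-1}h_2$, by choosing generic $h_1,h_2$ extending $g\restriction\K$ with $h_2=h_1f$; this uses comeagreness of generics in the open set $g\G_{(K)}$ together with $h\mapsto hf$ being a homeomorphism. Hence $\G_{(K)}\le\mathrm{H}$. Finally, Theorem 2.3 gives an automorphism with fixed point set exactly $K$, and a Lemma 2.2(3)-style coding argument yields a finite $\bar a$ such that $\G_{(K)}$ contains $\G_{\bar a}\cap$ (a small-index-many coset piece). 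Concretely, take $\bar a$ to be a code $a$ with $K=\{(a)_i\}$: then $\G_{a}$ fixes each $(a)_i$ with $i$ standard, so $\G_a\le\G_{(K)}\le\mathrm{H}$ and $\mathrm{H}$ is open. The step I expect to be the main obstacle is the amalgamation lemma in Step 1. Gluing two automorphisms of small submodels over a common one, so that the result is again the restriction of an automorphism of $\M$ to a small submodel, is precisely where arithmetic saturation (strength of $\NN$) is indispensable. There I would follow the proof of Theorem 2.3, realizing the amalgamated type by an element coding the joint structure.
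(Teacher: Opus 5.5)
There is a genuine gap in Step 2, in the proof of your main claim. At a splitting node the failure hypothesis gives you two full automorphisms $g,h$ with $h^{-1}g\notin\mathrm{H}$. You then keep refining along both branches, so the final $g_\sigma,g_\tau$ are different automorphisms obtained as limits. Nothing carries $g_\sigma^{-1}g_\tau\notin\mathrm{H}$ through the later stages or through the limit. $\mathrm{H}$ is an arbitrary non-open subgroup, not even Borel, so membership in $\mathrm{H}$ is not determined by the restriction of an automorphism to any small submodel. Interleaving extension requirements protects genericity, but not this property. An exact freezing mechanism is unavoidable: Polish groups can have non-open subgroups of countable index (e.g.\ $(\mathbb{Z}/2\mathbb{Z})^{\omega}$ has non-open subgroups of index $2$), so controlling only restrictions to small submodels cannot decide membership in such an $\mathrm{H}$. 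Tellingly, you introduce generic tuples but never use them, and you never use that $\mathrm{H}$ is non-meagre.

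The missing device is Lascar's, which is the proof of Theorem 3.3 run with the system of Corollary 3.8 for $I=\NN$: separate the branches by conjugators $h_\sigma$, not by the generics themselves.
\begin{itemize}
\item Since $\mathrm{H}$ is non-meagre (Lemma 3.1), each new generic $g_t$ can be taken in $h_t\mathrm{H}h_t^{-1}$ while extending the current generic tuple (Theorem 3.7, Lemma 3.2(1)).
\item Since $\mathrm{H}$ is not open, Lemma 3.2(2) gives a tuple-generic $f'\notin\mathrm{H}$ in $g_t^{h_t}\G_{(M_t)}$.
\item The conjugacy criterion for tuples (Theorem 3.5 with $n+1$ coordinates) writes $f'=g_t^{h_th}$, with $h$ fixing the current submodel pointwise and commuting with every earlier $g_{s\upharpoonright_m}^{h_t}$.
\end{itemize}
Hence $g_s^{h_\sigma}=g_s^{h_{s^{\frown}i}}$ exactly, so $g_s^{h_\sigma}\in\mathrm{H}$ iff $\sigma$ passes through $s^{\frown}0$. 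Since $g_s^{hk}\in\mathrm{H}\iff g_s^{h}\in\mathrm{H}$ for $k\in\mathrm{H}$, the cosets $h_\sigma\mathrm{H}$ are pairwise distinct.

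If you take your generic $g$ inside $\mathrm{H}$, then by the conjugacy criterion the failure of your main claim says exactly that $g^k\notin\mathrm{H}$ for some $k\in\G_{(K)}$. That is the splitting Lascar exploits. Your Step 3 reduction to $\G_a\le\G_{(K)}\le\mathrm{H}$ is fine.

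Two corrections to Step 1:
\begin{itemize}
\item The conjugacy criterion needs $g\upharpoonright_K$ to be e.c. (Definition 2(ii), Theorem 3.5). Conjugation by $\G_{(K)}$ preserves each statement $\exists x\,\varphi(x,g(x),\bar c)$ with $\bar c\in K$, and only existential closedness of $g\upharpoonright_K$ forces two generics agreeing on $K$ to agree on all of them.
\item Strength of $\NN$ does not enter via Theorem 2.3. It enters through the existence of e.c.\ restrictions (Theorem 2.6) and the density of the sets $O(\K,\LL,f)$ (cf.\ Theorem 3.6), where it supplies an $e>\NN$ making the relevant recursive types finitely satisfiable.
\end{itemize}
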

In formulating his notion of a generic automorphism, Lascar employed the following concept of an \textit{existentially closed} automorphism: 
\begin{dfn}
$(n\in\omega)$	Suppose $ \K\prec_{s}\M $ and $ (g_{0},...,g_{n})\in(\A_{\NN}^{\M}(\K))^{n+1} $. We  say $ (g_{0},...,g_{n}) $ is \textit{existentially closed} (abbreviated by \textit{e.c.}) if for every  tuple  $ {(f_{0},...,f_{n})\in(\G)^{n+1}} $ such that  $ g_{k}= f_{k}\upharpoonright_{K} $ for all $k=0,...,n$,  for every $\mathbb{L}_{A}$-formula $ \varphi(x,y,\bar{z}) $, and for each $ \bar{c}\in K $ if $ \M\models\exists x \ \varphi(x,f_{0}(x),...,f_{n}(x),\bar{c}) $, then $ \M\models\varphi(d,g_{0}(d),...,g_{n}(d),\bar{c}) $ for some $ d\in K $.
\end{dfn}

\begin{theorem}[Lascar \cite{las}]
$(n\in\omega)$ Suppose $ \M $ is a countable  arithmetically saturated model of $ \pa $ and $ a,b_{0},...,b_{n}\in M $ such that $ \tp(a)=\tp(b_{k}) $ for all $k=0,...,n$. Then there exist some $ \K\prec_{s}\M $ containing $ a,b_{0},...,b_{k} $ and some e.c. tuple $ (g_{0},...,g_{n})\in(\A_{\NN}^{\M}(\K))^{n+1} $ such that  $ g_{k}(a)=b_{k} $ for all $k=0,...,n$.
\end{theorem}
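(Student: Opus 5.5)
We will prove Theorem 2.7 (Lascar) by a countable bookkeeping construction. We build an increasing chain of small elementary submodels $\K_0\prec_s\K_1\prec_s\cdots$ of $\M$, together with tuples $(g^m_0,\dots,g^m_n)\in(\A_{\NN}^{\M}(\K_m))^{n+1}$ such that $g^{m+1}_k$ extends $g^m_k$. The limit is not the object we want, since a countable union of small models need not be small. Instead, the construction produces a single small $\K$ directly, using a satisfaction-class / recursive-saturation argument.

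\textbf{Step 1: a starting tuple.} Since $\tp(a)=\tp(b_k)$ and $\M$ is countable recursively saturated, there are $f_k\in\G$ with $f_k(a)=b_k$. Fix an inductive satisfaction class $S$ such that $(\M;S)$ is recursively saturated; this exists by Smory\'nski. Arithmetic saturation ($\NN$ strong) allows $\KK(\M;\cdot)$-type closures of small sets to be small. It also makes ``the $f_k$ restrict to automorphisms of a coded small model'' an arithmetic condition. Hence it can be realized inside $\M$ by an element coding a sequence.

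\textbf{Step 2: a one-step extension lemma.} Given small $\K$, a tuple $\bar g\in(\A_{\NN}^{\M}(\K))^{n+1}$ with extensions $\bar f\in\G^{n+1}$, a formula $\varphi$, and parameters $\bar c\in K$ such that $\M\models\exists x\,\varphi(x,f_0(x),\dots,f_n(x),\bar c)$, choose a witness $d$. Then pass to the small model $\K'\supseteq K\cup\{d\}$ closed under $f_k^{\pm1}$; this means iterating the $f_k$ along $\omega$. Smallness of $\K'$ uses the strength of $\NN$: the relevant $\omega$-sequence is arithmetic in a code for $K$ and $\bar f\upharpoonright$ its orbit, so it is coded by arithmetic saturation. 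Put $g'_k:=f_k\upharpoonright_{K'}$.

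\textbf{Step 3: the limit must be small.} We enumerate all requirements $(\varphi,\bar c)$ in the fashion of a Henkin construction, interleaved with closure under the $f_k$ and their inverses. The key idea is to perform this construction internally. We code $K$ by $\{(e)_i:i\in\omega\}$ and express, via $S$ and the coded $\bar f$-action, the recursive type saying: ``$e$ codes an elementary submodel, closed under the maps, such that every existential requirement with parameters from it, true for the extended maps, has a witness in it.'' Each finite piece of this type is satisfiable by Step 2. Arithmetic saturation then realizes the type. Consequently $\K$ is small, the $g_k=f_k\upharpoonright_K$ lie in $\A_{\NN}^{\M}(\K)$, and $g_k(a)=b_k$.

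\textbf{Main obstacle.} E.c.\ quantifies over \emph{all} extensions $\bar f'\in\G^{n+1}$ of $\bar g$, not only the fixed $\bar f$. The condition ``some automorphism extending $\bar g$ makes $\exists x\,\varphi$ true'' therefore has to be expressed arithmetically. For this we use that, in a countable recursively saturated model, a partial map on a small set extends to an automorphism iff it preserves types. That is an arithmetic-over-$S$ condition on the codes, so it is expressible in the type we realize. At each stage we then switch the chosen extension $\bar f$ to one that witnesses the requirement, which forces us to build $\bar f$ along with $\K$ by back-and-forth. Making this switching uniform enough to be captured by one realized type is the step I expect to be hardest.
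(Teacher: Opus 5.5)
There is a genuine gap, and it begins before the step you flag.

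\textbf{Step 2 is circular, and false as stated.} You claim that closing a small set under the fixed automorphisms $f_k^{\pm1}$ keeps it small, because the orbit is ``arithmetic in a code for $K$ and $\bar f\upharpoonright$ its orbit''. But arithmetic saturation only realizes arithmetic types over finitely many parameters, and $\bar f$ is not definable in $\M$. For an arbitrary choice of $f_k$ in Step 1 the claim actually fails. For instance, when $b\notin\KK(\M;a)$, a back-and-forth that puts each new iterate $f^{m+1}(a)$ outside the $m$-th of the countably many small sets yields an $f$ with $f(a)=b$ whose $a$-orbit lies in no small set. So no small $\K$ containing $a$ is closed under $f$.

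\textbf{The same objection reaches Steps 1 and 3.} ``The $f_k$ restrict to automorphisms of a coded small model'' (Step 1) is not a condition expressible in $\M$. The type in Step 3 cannot refer to ``the coded $\bar f$-action'', and its finite satisfiability cannot be certified by the external construction of Step 2.

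\textbf{The e.c.\ clause is not first order.} ``Some extension of $\bar g$ makes $\exists x\,\varphi$ true'' amounts to saying that codes of $\bar g$, extended by $x$ and by $y$ respectively, have the same type; that is an infinite conjunction. ``Has a witness in $K$'' means ``at a standard index'', which a type in $e$ cannot express. Your final paragraph leaves exactly this unresolved, so no proof results.

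\textbf{The missing idea.} The paper's argument for the analogous Theorem 3.4 (take $I=\NN$) reverses the order and never uses external automorphisms.
\begin{itemize}
\item[(1)] First fix a small $(\K;S')\prec(\M;S)$ containing $a,\bar b$, coded by $\alpha$.
\item[(2)] Use strength of $\NN$ to get a nonstandard $\epsilon$ such that, for entries of $\alpha$ at standard indices, ``the $S$-types agree beyond $\epsilon$'' is a single formula equivalent to ``same type''.
\item[(3)] Define in $(\M;S)$ a recursion $\Omega$ that at stage $i$ picks the least pair of indices witnessing the $i$-th requirement, while keeping the two growing sequences $\epsilon$-equivalent. Elementarity of $(\K;S')$ together with overspill shows the picks are standard. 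So each witness sits at a prescribed standard index, and e.c.\ never has to be written as a type.
\item[(4)] Strength once more makes $p_e(x,y)$ (same type, with $(x)_i=(\alpha)_{(\Omega(i))_0}$ and $(y)_i=(\alpha)_{(\Omega(i))_1}$) finitely satisfiable for some nonstandard $e$. A realization $(\lambda,\xi)$ has $\tp(\lambda)=\tp(\xi)$, so $g=\bigcup_i(\lambda)_i\mapsto(\xi)_i$ extends to an automorphism.
\item[(5)] If some extension $f$ of $g$ has a witness $x$, then the finite prefixes extended by $x$ and by $f(x)$ have the same type. Hence the recursion already placed a witness in $K$ at that stage, which gives e.c.
\item[(6)] Using the requirements $x=(\alpha)_i$ (and symmetrically $y=(\alpha)_i$) gives $\mathrm{Dom}(g)=\mathrm{Range}(g)=K$, with no closure under external maps.
\end{itemize}
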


\begin{dfn}
($n\in\omega$)	A tuple $ (g_{0},...,g_{n})\in(\G)^{n+1} $ is called \textit{Lascar generic} if:
	\begin{itemize}
		\item[(i)] For every finite tuple $ \bar{a}\in M $ there exists some  $ \K\prec_{s}\M $ containing $ \bar{a}$ such that $ (g_{0}\upharpoonright_{K},...,g_{n}\upharpoonright_{K})\in(\A_{\NN}^{\M}(\K))^{n+1}  $ is e.c.
		\item[(ii)] For every two $ \K,\mathcal{L}\prec_{s}\M $ and  for each $ (f_{0},...,f_{n})\in\A_{\NN}^{\M}(\mathcal{L}) $ if  the following conditions $(\spadesuit)$ hold, then there exists some $ h\in\G_{(K)} $ such that $ f_{k}\subseteq g_{k}^{h} $ for all $k=0,...n$.
		
	$$(\spadesuit): \ \ \left(\begin{array}{c}
	\K\prec\mathcal{L}, \text{ and }	\\
	{g_{k}\upharpoonright_{K}=f_{k}\upharpoonright_{K}\in\A_{\NN}^{\M}(\K) }  \text{ is e.c. for all } k=0,...,n
	\end{array}\right).$$
	\end{itemize}
Let $ \LG(\M) $ indicates to the class of all finite tuples of Lascar generic automorphisms of $ \M $. When there is no risk of confusion, we simply write $ \LG $ .
\end{dfn}

\begin{theorem}[Lascar \cite{las}]
$(n\in\omega)$	The set of all $(n+1)$-tuples of Lascar generic automorphisms of a countable arithmetically saturated model $ \M $ of $ \pa $ is comeagre in $ (\A(\M))^{n+1} $.
\end{theorem}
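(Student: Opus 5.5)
Since $\M$ is countable, $\G^{n+1}$ is Polish, and I will write the set of Lascar generic $(n+1)$-tuples as a countable intersection of sets, each of which is comeagre. The countable index sets are available because $\mathcal{S}$ is countable and each $\A_{\NN}^{\M}(\K)$ is countable. For each $\K\prec_{s}\M$ and each tuple $\bar{g}\in(\A_{\NN}^{\M}(\K))^{n+1}$, the set $U_{\K,\bar{g}}:=\{\bar{f}\in\G^{n+1}: f_{k}\upharpoonright_{K}=g_{k}\}$ is open in $\G^{n+1}$. It is a countable intersection of basic open sets, but it is closed, and by the homogeneity/extension properties it is nonempty. I will handle conditions (i) and (ii) separately.

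\textbf{Condition (i).} Fix a finite tuple $\bar{a}$. Let $D_{\bar{a}}$ be the set of $\bar{f}$ for which some $\K\prec_{s}\M$ containing $\bar{a}$ has $\bar{f}\upharpoonright_{K}$ e.c.; this is the union of the sets $U_{\K,\bar{g}}$ with $\bar{g}$ e.c. I will show that $D_{\bar{a}}$ contains a dense open set. Take a basic open set given by $f_{k}(\bar{c})=\bar{d}_{k}$ with $\bar{f}$ realizing it. Theorem 2.7 (in its tuple form, with $\bar{a},\bar{c},\bar{d}_{k}$ coded as single elements) yields $\K\ni\bar{a},\bar{c}$ and an e.c. $\bar{g}$ with $g_{k}(\bar{c})=\bar{d}_{k}$. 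Any extension of $\bar{g}$ to automorphisms of $\M$ then lies in both the basic open set and $D_{\bar{a}}$.

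The subtlety is that $U_{\K,\bar{g}}$ need not be open. So I will instead use the set of $\bar{f}$ that agree with an e.c. $\bar{g}$ on a finite generating piece. I will take $\K$ of the form $\KK(\M;b)$ or a small model determined by finitely many parameters, so that $\bar{f}\upharpoonright_{K}$ is determined by $f_{k}(b)$ and the set is open. Since Lascar's models can be chosen to be of this form via arithmetic saturation, this is the first place to be careful.

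\textbf{Condition (ii).} Fix $\K\prec\LL$ in $\mathcal{S}$, an e.c. tuple $\bar{g}$ on $\K$, and $\bar{f}\in\A_{\NN}^{\M}(\LL)^{n+1}$ extending $\bar{g}$. Let $E$ be the set of $\bar{p}$ such that either $\bar{p}\upharpoonright_{K}\neq\bar{g}$, or there is $h\in\G_{(K)}$ with $\bar{f}\subseteq\bar{p}^{h}$. Condition (ii) is the intersection of countably many such $E$. The complement of $U_{\K,\bar{g}}$ is open, so it suffices to show that $E\cap U_{\K,\bar{g}}$ is comeagre in $U_{\K,\bar{g}}$, which is a Polish space since it is closed.

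I plan to use a back-and-forth (Banach–Mazur game) argument inside $U_{\K,\bar{g}}$. The players extend finite conditions on $\bar{p}$, while I build $h$ as a union of finite partial maps fixing $K$. At each stage I must realize in $\M$, over $K$ and the finitely many chosen parameters, the type required to match an element of $\LL$. The e.c. property of $\bar{g}$, together with arithmetic saturation (the types involved being arithmetic in the code of $\K$), supplies witnesses. This amalgamation step, showing that any finite extension of $\bar{p}$ can be further extended compatibly with copying $\bar{f}$ into $\bar{p}$, is the main obstacle.

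Intersecting the countably many comeagre sets from (i) and (ii) gives a comeagre set, and it is contained in the set of Lascar generic tuples, which proves the theorem.
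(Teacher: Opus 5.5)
Your decomposition is the one the paper uses. In its proof of the generalization, Theorem 3.6, $\LG_I$ is written as $\bigcap_{\bar a}D(\bar a)\cap\bigcap O(\K,\LL,f)$, with each set open and dense. Your density argument for (i) via Theorem 2.7 is also fine.

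Your worry about openness is misplaced, though, and settling it changes how (ii) should be attacked. Every $\K\prec_{s}\M$ is $\{(\alpha)_n:n\in\omega\}$ for a single $\alpha$. Automorphisms fix $\NN$ pointwise, so $f((\alpha)_n)=(f(\alpha))_n$, and therefore $f\upharpoonright_K$ is determined by $f(\alpha)$. Hence $U_{\K,\bar g}$ is a union of basic open sets (indeed clopen), and $D_{\bar a}$ is open. No special choice of $\K$ and no appeal to arithmetic saturation is needed. The same remark applied to a code $\beta$ of $\LL$ shows that $\{\bar p:\exists h\in\G_{(K)}\ \bar f\subseteq\bar p^{h}\}$ is open. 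If $h$ witnesses this for $\bar p$ and $q_k(h(\beta))=p_k(h(\beta))$ for all $k$, then $q_k^{h}(\beta)=p_k^{h}(\beta)$, hence $q_k^{h}\upharpoonright_L=p_k^{h}\upharpoonright_L$. So (ii) needs only density, and no Banach--Mazur game is required.

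The genuine gap is that this density, which you set aside as ``the main obstacle'', is the whole content of (ii), and your outline does not contain the idea that proves it. Fix extensions $\hat f_k\in\G$ of $f_k$ and a nonempty basic condition inside $U_{\K,\bar g}$, say $p_k(a)=b_k$. Nonemptiness gives $\tp(a,\bar c)=\tp(b_k,g_k(\bar c))$ for all $\bar c\in K$. One looks for $(x,\bar y)$ satisfying two requirements:
\begin{itemize}
\item[(a)] $\tp(x,\bar y/K)=\tp(\beta,\hat f_0(\beta),\dots,\hat f_n(\beta)/K)$;
\item[(b)] $\tp(x,a,\bar c)=\tp(y_k,b_k,g_k(\bar c))$ for all $\bar c\in K$ and all $k\le n$.
\end{itemize}
Realizing this type and doing back-and-forth gives $h\in\G_{(K)}$ with $h(\beta)=x$ and $h(\hat f_k(\beta))=y_k$. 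It also gives $\bar p\supseteq\bar g$ with $p_k(a)=b_k$ and $p_k(x)=y_k$. Then $p_k^{h}(\beta)=\hat f_k(\beta)$, so $\bar f\subseteq\bar p^{h}$.

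The e.c. property enters exactly in proving finite satisfiability. Suppose a finite piece failed, and let $\theta(x,\bar y,\bar c)$ be its part coming from the type of $(\beta,\hat{\bar f}(\beta))$ over $K$. Then $\M\models\exists x\,\theta(x,\hat f_0(x),\dots,\hat f_n(x),\bar c)$, and each $\hat f_k$ extends $g_k$. So e.c. gives $d\in K$ with $\M\models\theta(d,g_0(d),\dots,g_n(d),\bar c)$. Because $d\in K$, the tuple $(d,\bar g(d))$ automatically satisfies the compatibility conditions (b) with $(a,\bar b)$, which is a contradiction. The paper carries this out, for a general strong cut, using a satisfaction class and the strength of the cut.

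Your plan to build $h$ by matching elements of $\LL$ one at a time also works against you here. After the first stage, the amalgamation problems carry the previously chosen images $h(l_j)\notin K$ as parameters. The e.c. property of $\bar g$ only concerns formulas with parameters in $K$, so it supplies no witnesses for them. Treating all of $\LL$ at once through its single code $\beta$ is what makes e.c. over $K$ sufficient.
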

The following lemma, is essentially due to Lascar \cite{las} and can be proved by an argument similar to that used in the proof of  \cite[Propositon 6.2]{km}:
\begin{lem}
Suppose $\M$ is a countable $\mathbb{L}$-structure (where $\mathbb{L}$ is an arbitrary countable language) and $\G$ is the Polish group of its automorphisms. Moreover, let $X$ be a comeagre subset of $\G$, $\mathrm{H}$ be a nonopen subgroup of $\G$, and $\mathrm{O}$ be a nonempty open subset of $\G$. Then there exists some $g\in X\cap\mathrm{O}$ which is not inside $\mathrm{H}$.
\end{lem}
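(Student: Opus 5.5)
We prove Lemma 2.8 with a Baire category argument, following the proof of \cite[Proposition 6.2]{km}. The key point is that a nonopen subgroup of a Polish group cannot contain a comeagre part of a nonempty open set. Fix $X$, $\mathrm{H}$ and $\mathrm{O}$ as in the statement, and suppose for contradiction that $X\cap\mathrm{O}\subseteq\mathrm{H}$.

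We may shrink $\mathrm{O}$ to a basic open set $g_0\G_{\bar a}$. Since $\G$ is Polish, it is a Baire space, and $X$ is comeagre. Hence $X\cap g_0\G_{\bar a}$ is comeagre in $g_0\G_{\bar a}$, and in particular it is nonempty. Pick $h_0\in X\cap g_0\G_{\bar a}$. Then $h_0\G_{\bar a}=g_0\G_{\bar a}$, and $h_0\in\mathrm{H}$ by our assumption. Left translation by $h_0^{-1}$ is a homeomorphism of $\G$, so
\[
Y:=h_0^{-1}(X\cap h_0\G_{\bar a})
\]
is comeagre in the open subgroup $\G_{\bar a}$. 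Since $h_0^{-1}\in\mathrm{H}$ and $X\cap h_0\G_{\bar a}\subseteq\mathrm{H}$, we also have $Y\subseteq\mathrm{H}\cap\G_{\bar a}$.

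Put $\mathrm{H}':=\mathrm{H}\cap\G_{\bar a}$, a subgroup of $\G_{\bar a}$ that contains the comeagre subset $Y$ of $\G_{\bar a}$. We claim $\mathrm{H}'=\G_{\bar a}$. Take any $k\in\G_{\bar a}$. The sets $Y$ and $kY^{-1}$ are both comeagre in $\G_{\bar a}$, because inversion and left translation by $k$ are homeomorphisms of $\G_{\bar a}$. The space $\G_{\bar a}$ is open in a Polish space, hence Polish and Baire. Therefore $Y\cap kY^{-1}\neq\emptyset$, so there are $y_1,y_2\in Y$ with $y_1=ky_2^{-1}$. This gives $k=y_1y_2\in\mathrm{H}'$.

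Consequently $\G_{\bar a}\subseteq\mathrm{H}$, so $\mathrm{H}$ contains an open neighbourhood of the identity. A subgroup containing an open subgroup is a union of cosets of it, and is therefore open. This contradicts the assumption that $\mathrm{H}$ is nonopen. Hence $X\cap\mathrm{O}\not\subseteq\mathrm{H}$, which is the claim of Lemma 2.8.

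The only delicate point is ensuring that comeagreness relativizes correctly to the open set $\G_{\bar a}$ and survives translation and inversion. This holds because $\G_{\bar a}$ is open, so the restriction of a comeagre set to it is comeagre in it, and because homeomorphisms preserve meagre sets. Beyond that, the argument is standard.
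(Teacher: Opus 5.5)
Your proof is correct and is essentially the argument the paper has in mind. The paper gives no proof of its own; it only points to the Baire-category argument of \cite[Proposition 6.2]{km}. Your Pettis-style argument is that standard route: a subgroup containing a comeagre part of a basic open set $g_0\G_{\bar a}$ must contain $\G_{\bar a}$, hence is open. Every step checks out, including the relativization of comeagreness to $\G_{\bar a}$ and your avoidance of any Baire-property assumption on $\mathrm{H}$.
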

\begin{con}
Within a  model $\M$ of $\pa$, we can   canonically\footnote{By a canonical enumeration of $\mathbb{L}_{A}$-formulas or $\mathbb{L}_{A}$-terms, we mean an enumeration in which each index represents the G\"{o}del number of the corresponding $\mathbb{L}_{A}$-formula or $\mathbb{L}_{A}$-term.}  enumerate all $\mathbb{L}_{A}$-formulas (containing the nonstandard $\mathbb{L}_{A}$-formulas in $\M$) with a specific number of variables, as $\langle \bm{\phi}_{r}(\bar{x}): \ r\in M\rangle$, and all $\mathbb{L}_{A}$-terms (containing the nonstandard $\mathbb{L}_{A}$-terms in $\M$) with a specific number of variables, as $\langle \textbf{t}_{r}(\bar{x}): r\in M\rangle$.  Throughout this paper, we frequently refer to these canonical enumerations, and the number of variables used in the $\mathbb{L}_{A}$-formulas or $\mathbb{L}_{A}$-terms will be clear from  context.
\end{con}
\section{Small index property }
In this section, we begin by generalizing the argument originally used by Lascar in \cite{las}  to prove the SIP for the automorphism group of countable arithmetically saturated models of $\pa$. We then apply this generalized method to establish the small index property  for  the subgroup of the automorphism group of a countable recursively saturated model of $\pa$ that pointwise stabilizes a strong cut of the model.

\subsection{Lascar's method}
Here, we define a notion based on Lascar's proof of Theorem 2.5:

Suppose $\mathbb{L}$ is a countable language,  $\M$ is a countable $\mathbb{L}$-structure, and $\G$ is the  Polish group of $\A(\M)$. We say that the triple $(\mathcal{X},\mathcal{A},\{Y_{K}: \ K\in\mathcal{A}\})$ is a\textit{ Lascar generic system} for $\G$ if:
\begin{itemize}
	\item[(1)] $\mathcal{X}:=\bigcup_{n>0}\mathcal{X}_{n}$, where each $\mathcal{X}_{n} $ is a comeagre subset of $\G^{n}$ (with product topology).
	 \item[(2)]   ($n>0$) For every $(f_{1},...,f_{n})\in\mathcal{X} $ and for each  $h\in\G$, it holds that $(f_{1}^{h},...,f_{n}^{h})\in\mathcal{X} $.
	\item[(3)]  ($n>0$) For every $(f_{1},...,f_{n})\in \mathcal{X}$ the following set is a comeagre subset of $\G$:
	$$\{g\in \G: \ (f_{1},...,f_{n},g)\in \mathcal{X}_{n+1}\}. $$
	\item[(4)] $\mathcal{A}$ is a family of subsets of $M$ such that $\G_{(K)}$ is open in $\G$ for every $K\in\mathcal{A}$.
	\item[(5)] For every $K\in\mathcal{A}$ and for each $h\in \G$, $h(K)\in\mathcal{A}$.
	\item[(6)] For every $K \in \mathcal{A}$, let $Y_K := \bigcup_{n >0} Y_{K,n}$, where each $Y_{K,n}$ is a set of $n$-tuples such that each element of the $n$-tuple is the restriction to $K$ of an automorphism $g$ of $\M$ satisfying $g(K) = K$.
	\item[(7)]  ($n>0$) For every tuple $(f_{1},...,f_{n})\in\mathcal{X} $, for each  $\bar{a}\in M$ and for every $K\in\mathcal{A}$ there exists some $L\in \mathcal{A}$ containing $\bar{a}$ and $K$ such that $(f_{1}\upharpoonright_{L},...,f_{n}\upharpoonright_{L})\in Y_{K,n} $.
	 \item[(8)]  ($n>0$) For each tuple $(f_{1},...,f_{n})\in\mathcal{X} $ and for every  $h\in\G$, if $K\in \mathcal{A}$ such that ${(f_{1}\upharpoonright_{K},...,f_{n}\upharpoonright_{K})\in Y_{K,n}}$, then $(f_{1}^{h}\upharpoonright_{h^{-1}(K)},...,f_{n}^{h}\upharpoonright_{h^{-1}(K)})\in Y_{K,n}$.
	\item[(9)]  ($n>0$) For every two $n$-tuples $(f_{1},...,f_{n}),(g_{1},...,g_{n})\in \mathcal{X}_{n}$ if there exists some $K\in\mathcal{A}$ such that $(f_{1}\upharpoonright_{K},...,f_{n}\upharpoonright_{K})\in Y_{K,n}$ and $f_{i}\upharpoonright_{K}=g_{i}\upharpoonright_{K}$
 for all $i=1,...,n$, then there exists some $h\in\G_{(K)}$ such that $f_{i}=g_{i}^{h}$ for all  $i=1,...,n$.
\end{itemize}
 We will show that any automorphism group possessing a Lascar generic system has the SIP. For this purpose we need the following lemmas:

 \begin{lem}[Hodges-Hodkinson-Lascar-Shelah \cite{hod}]
Suppose $\G$ is a Polish group. Then the index of any meagre subgroup of $\G$ is $2^{\aleph_{0}}$.
 \end{lem}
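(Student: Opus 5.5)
The plan is to prove the contrapositive in its standard form. Let $\mathrm{H}$ be a meagre subgroup of the Polish group $\G$. We show that $\mathrm{H}$ has $2^{\aleph_{0}}$ many distinct left cosets. The tool is a Mycielski-type construction: we build a perfect set $P\subseteq\G$ such that for all distinct $x,y\in P$ we have $x^{-1}y\notin\mathrm{H}$. Then distinct elements of $P$ lie in distinct left cosets of $\mathrm{H}$. A perfect subset of a Polish space has cardinality $2^{\aleph_{0}}$, so this gives the claim.

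First we pass to a convenient meagre set. Write $\mathrm{H}\subseteq\bigcup_{n}F_{n}$ with each $F_{n}$ closed and nowhere dense. Consider the map $\mu:\G\times\G\to\G$, $\mu(x,y)=x^{-1}y$. It is continuous and open, and it is in fact a homeomorphism on each slice: for fixed $x$, the map $y\mapsto x^{-1}y$ is a homeomorphism. Hence each $\mu^{-1}(F_{n})$ is closed and nowhere dense in $\G\times\G$. So $R:=\mu^{-1}(\mathrm{H})$ is a meagre relation on $\G$. We may also assume $\G$ is perfect, since otherwise it is discrete. In that case a countable $\G$ makes every subgroup open, and the statement concerns a meagre, hence empty, situation. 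An uncountable Polish group is perfect because it is homogeneous and has no isolated point. If $\G$ is countable and Polish it is discrete, and its only meagre subset is the empty set, so the lemma is vacuous.

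Next we run the Cantor scheme. Fix a complete compatible metric $d$ on $\G$. By induction on the length $k$, we choose nonempty open sets $U_{s}$ for $s\in 2^{<\omega}$ with the following properties:
\begin{itemize}
\item[(a)] $\overline{U_{s^\frown i}}\subseteq U_{s}$ and $\mathrm{diam}(U_{s})<2^{-|s|}$;
\item[(b)] $U_{s^\frown 0}\cap U_{s^\frown 1}=\emptyset$;
\item[(c)] for distinct $s,t$ of length $k$ and all $n<k$, we have $(U_{s}\times U_{t})\cap\mu^{-1}(F_{n})=\emptyset$.
\end{itemize}
At stage $k+1$ we first split each $U_{s}$ into two disjoint nonempty open subsets, which is possible by perfectness. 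We then handle the finitely many pairs of distinct nodes and the finitely many $n\le k$ one at a time. Given open $V\times W$ and a closed nowhere dense set $C$, we can shrink to a nonempty open box $V'\times W'\subseteq (V\times W)\setminus C$, because open boxes form a base. Shrinking is compatible with the constraints already secured. For each branch $\sigma\in2^{\omega}$, completeness of $d$ and (a) give a single point $x_{\sigma}\in\bigcap_{k}U_{\sigma\upharpoonright k}$. The map $\sigma\mapsto x_{\sigma}$ is injective by (b). For $\sigma\neq\tau$, condition (c) gives $(x_{\sigma},x_{\tau})\notin\mu^{-1}(F_{n})$ for every $n$, so $x_{\sigma}^{-1}x_{\tau}\notin\mathrm{H}$.

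The main point needing care is the claim that $\mu^{-1}(F_{n})$ is nowhere dense in $\G\times\G$. This is where the group structure enters. Since $(x,y)\mapsto(x,x^{-1}y)$ is a homeomorphism of $\G\times\G$, it carries $\G\times F_{n}$ onto $\mu^{-1}(F_{n})$. The set $\G\times F_{n}$ is closed nowhere dense, so the claim follows, and the rest is routine bookkeeping. This argument uses only that $\G$ is a Polish group, so it applies to $\G=\A(\M)$ and to its closed subgroups, such as $\G_{(I)}$, which are again Polish.
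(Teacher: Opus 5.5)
Your proof is correct. The paper gives no proof of this lemma: it only cites Hodges--Hodkinson--Lascar--Shelah. Your argument therefore serves as a self-contained replacement for that citation. It is the standard Mycielski-type argument: a meagre relation on a perfect Polish space admits a Cantor set of pairwise unrelated points, applied here to the relation $\{(x,y): x^{-1}y\in\mathrm{H}\}$. The group structure enters at exactly the right place. The homeomorphism $(x,y)\mapsto(x,x^{-1}y)$ of $\G\times\G$ identifies $\mu^{-1}(F_n)$ with $\G\times F_n$, so each $\mu^{-1}(F_n)$ is closed nowhere dense. The Cantor-scheme bookkeeping (shrinking boxes to avoid each closed nowhere dense set, and shrinking preserving earlier constraints) is sound.

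Three minor remarks, none of which is a gap:
\begin{itemize}
\item You only show the index is at least $2^{\aleph_0}$. Equality also needs the trivial bound $|\G|\le 2^{\aleph_0}$, which holds for any separable metrizable space.
\item Your reduction to the perfect case is somewhat tangled, though correct. A one-line version: $\mathrm{H}$ contains the identity $e$, so if $\mathrm{H}$ is meagre then $\{e\}$ is meagre, hence not open by the Baire category theorem. By homogeneity, $\G$ then has no isolated points.
\item What you give is a direct proof, not a proof of the contrapositive.
\end{itemize}
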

 
 \begin{lem}
 $(n\in\omega)$	Suppose $\M$ is a countable $\mathbb{L}$-structure (for a given countable language $\mathbb{L}$) and $\G:=\A(\M)$. Moreover, let  $(\mathcal{X},\mathcal{A},\{Y_{\K}: \K\in\mathcal{A}\})$ be a Lascar generic system for $\G$ and $(g_{0},...,g_{n})\in\mathcal{X}$. Then:
 \begin{itemize}
\item[(1)] If   $\mathrm{H}$ is a subgroup of $\G$ whose index is less than $2^{\aleph_{0}}$, then there exists some $g\in \mathrm{H}$ such that $(g_{0},...,g_{n},g)\in\mathcal{X}$.
\item[(2)] If $\mathrm{H}$ is a nonopen subgroup of $\G$ and  $ \mathrm{O}$ is a nonempty open subset of $\G$, then there exists some $g\in \mathrm{O}\setminus \mathrm{H}$ such that $(g_{0},...,g_{n},g)\in\mathcal{X}$.
 \end{itemize}
 \end{lem}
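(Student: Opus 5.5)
We use that the set $C:=\{g\in\G: (g_{0},...,g_{n},g)\in\mathcal{X}\}$ is comeagre in $\G$, by property (3) of a Lascar generic system applied to $(g_{0},...,g_{n})\in\mathcal{X}$. Both parts then reduce to Baire-category facts about subgroups of the Polish group $\G$.

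For (1), we first show that $\mathrm{H}$ is nonmeagre. If it were meagre, Lemma 3.1 (Hodges--Hodkinson--Lascar--Shelah) would give index exactly $2^{\aleph_{0}}$, contradicting the hypothesis. Next we want some coset of $\mathrm{H}$ to meet $C$ inside $\mathrm{H}$ itself, and for this we pass to the closure. Let $\bar{\mathrm{H}}$ be the closure of $\mathrm{H}$. By Pettis' theorem, since $\mathrm{H}$ is a nonmeagre subgroup, $\mathrm{H}^{-1}\mathrm{H}=\mathrm{H}$ contains an open neighbourhood of the identity when $\mathrm{H}$ has the Baire property. In general $\mathrm{H}$ need not have the Baire property, so we argue directly by translation.

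Since $\G=\bigcup_{\gamma}\gamma\mathrm{H}$ with fewer than $2^{\aleph_{0}}$ cosets, we intersect with $C$: $C=\bigcup_{\gamma}(C\cap\gamma\mathrm{H})$. The key trick is to replace $C$ by $C\cap C^{-1}$, which is still comeagre because inversion is a homeomorphism. We also want closure under conjugation: by property (2), $(g_0,\dots,g_n,g)\in\mathcal{X}$ implies $(g_0^h,\dots,g_n^h,g^h)\in\mathcal{X}$, but this does not keep the $g_i$ fixed, so we do not rely on it. Instead we use a Kuratowski--Ulam argument. The set $D:=\{(x,y)\in\G^{2}: x\in C,\ y\in C,\ x^{-1}y\in C\}$ is comeagre in $\G^{2}$, since each condition is the preimage of a comeagre set under an open continuous surjection $\G^2\to\G$. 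If every element of $C\cap\mathrm{H}$ failed, we would aim for a contradiction by producing $2^{\aleph_{0}}$ elements of $C$ that are pairwise inequivalent modulo $\mathrm{H}$. This is the Mycielski-style construction underlying the proof of Lemma 3.1: build a perfect set $P\subseteq C$ such that $x^{-1}y\notin\mathrm{H}$ for distinct $x,y\in P$. By pigeonhole over fewer than $2^{\aleph_{0}}$ cosets, two distinct $x,y\in P$ lie in the same coset, so $x^{-1}y\in\mathrm{H}$, a contradiction. Concretely, Mycielski's theorem applied to the comeagre relation $D$ yields a perfect set $P$ with $(x,y)\in D$ for all distinct $x,y\in P$. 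Two distinct elements of $P$ in the same left coset then give $g:=x^{-1}y\in\mathrm{H}\cap C$, which is exactly what (1) requires. So the plan is: Mycielski on $D$, then pigeonhole on cosets. The main obstacle is checking that $D$ is comeagre in $\G^{2}$, specifically the condition $x^{-1}y\in C$, which needs the map $(x,y)\mapsto x^{-1}y$ to pull back comeagre sets to comeagre sets. This holds because $(x,y)\mapsto(x,x^{-1}y)$ is a homeomorphism of $\G^{2}$, and $\G\times C$ is comeagre in $\G^{2}$.

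For (2), we apply Lemma 2.8 directly with $X:=C$, which is comeagre, the nonopen subgroup $\mathrm{H}$, and the open set $\mathrm{O}$. It gives $g\in C\cap\mathrm{O}$ with $g\notin\mathrm{H}$, that is, $(g_{0},...,g_{n},g)\in\mathcal{X}$ and $g\in\mathrm{O}\setminus\mathrm{H}$. No further work is needed here.
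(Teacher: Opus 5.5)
Your proof is correct. Part (2) is exactly the paper's argument: Lemma 2.8 applied to the comeagre set $C$.

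For part (1) you take a long detour. The paper simply notes that if $\mathrm{H}\cap C=\emptyset$, then $\mathrm{H}\subseteq\G\setminus C$ is meagre, contradicting Lemma 3.1. You already had this in your first step, since a nonmeagre set must meet every comeagre set. Everything you write after that step can be deleted: Pettis' theorem, the closure $\bar{\mathrm{H}}$, $C\cap C^{-1}$, and the conjugation remark are all dead ends.

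The Mycielski argument you finally give is nevertheless valid. The relation $\{(x,y): x^{-1}y\in C\}$ is comeagre in $\G^{2}$. Mycielski's theorem gives a perfect set $P$ of pairwise related points, and pigeonhole over fewer than $2^{\aleph_0}$ left cosets yields distinct $x,y\in P$ with $x^{-1}y\in\mathrm{H}\cap C$.

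In effect you are reproving the Hodges--Hodkinson--Lascar--Shelah lemma inline, in the sharper form ``a subgroup of index $<2^{\aleph_0}$ meets every comeagre set.'' This makes the argument independent of Lemma 3.1, but it costs length. It also needs $\G$ to be perfect for Mycielski's theorem. That is harmless: if $\G$ has an isolated point it is discrete, so $C=\G$ and $g=\mathrm{id}$ works. Finally, the conditions $x\in C$ and $y\in C$ in your set $D$ are superfluous; only $x^{-1}y\in C$ is used.
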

 \begin{proof}
 	\begin{itemize}
 	\item[(1)]  Suppose not. So by the third condition in the definition of a Lascar generic system, $\mathrm{H}$ is meagre in $\G$; but this is in contradiction with the previous lemma.
 	\item[(2)] By the third condition in the definition of a Lascar generic system and Lemma 2.8. 
 	\end{itemize}
\end{proof}

We are now ready to outline a generalization of the steps used by Lascar in his proof:
\begin{theorem}
	Suppose $\M$ is a countable $\mathbb{L}$-structure (for a given countable language $\mathbb{L}$) and $\G:=\A(\M)$ such that it possesses a Lascar generic system $(\mathcal{X},\mathcal{A},\{Y_{\K}: \K\in\mathcal{A}\})$. Then $\G$ has the SIP.
\end{theorem}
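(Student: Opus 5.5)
We argue by contradiction. Let $\mathrm{H}\leq\G$ have index $<2^{\aleph_0}$ and suppose $\mathrm{H}$ is not open. We build a binary tree of automorphisms $\langle k_s: s\in 2^{<\omega}\rangle$ whose branch limits $k_\sigma$ ($\sigma\in 2^{\omega}$) lie in pairwise distinct left cosets of $\mathrm{H}$. That gives $2^{\aleph_0}$ cosets, a contradiction.

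\textbf{Set-up.} Fix an enumeration $\langle a_m:m\in\omega\rangle$ of $M$. By Lemma 3.2(1), applied with $\mathcal{X}_1$ in place of a tuple (or by condition (3) together with Lemma 3.1), there is $g_0\in\mathrm{H}$ with $(g_0)\in\mathcal{X}$. By condition (7) there is $K_0\in\mathcal{A}$ with $g_0\upharpoonright_{K_0}\in Y_{K_0,1}$.

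\textbf{Inductive step.} At stage $n$ we hold a tuple $(g_0,\dots,g_n)\in\mathcal{X}$ with every $g_i\in\mathrm{H}$, together with sets $K_s\in\mathcal{A}$ and automorphisms $k_s$ for $s\in 2^{n}$. These satisfy three conditions:
\begin{itemize}
\item[(a)] $(g_0^{k_s},\dots,g_n^{k_s})$ restricted to $k_s^{-1}(K_s)$ lies in the relevant $Y$, which is available by (8);
\item[(b)] $K_s$ contains $a_0,\dots,a_n$ and their images under $k_s^{\pm1}$;
\item[(c)] $k_{s^\frown i}\in k_s\G_{(K_s)}$.
\end{itemize}
To split a node $s$, we use Lemma 3.2(2) with the open set $\mathrm{O}=\G_{(K_s)}$, which is open by (4). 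This yields $u\in\G_{(K_s)}\setminus\mathrm{H}$ such that $(g_0,\dots,g_n,u)\in\mathcal{X}$. We also pick $g_{n+1}\in\mathrm{H}$, using Lemma 3.2(1), so that the extended tuple stays in $\mathcal{X}$; by (3) both requirements can be met simultaneously. The two successors are then defined as $k_{s^\frown0}=k_s$ and $k_{s^\frown1}=k_s\,v$. Here $v\in\G_{(K_s)}$ is produced by condition (9). It is the conjugator between the two generic tuples $(g_0,\dots,g_n,g_{n+1})$ and $(g_0,\dots,g_n,u')$, where $u'$ is a generic element agreeing with $g_{n+1}$ on $K_s$.

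Because $v$ conjugates a tuple made of elements of $\mathrm{H}$ to a tuple containing an element outside $\mathrm{H}$, $v$ cannot lie in $\mathrm{H}$. Finally, by (7) and (5) we enlarge $K_{s^\frown i}$ so that it contains $K_s$, the point $a_{n+1}$, and their images, and so that the restrictions land in $Y$ again.

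\textbf{Limits and the contradiction.} Condition (c), together with the fact that the sets $K_s$ exhaust $M$ along every branch (by (b)), makes $\langle k_{\sigma\upharpoonright n}\rangle$ converge in $\G$ to an automorphism $k_\sigma$. Both forward and inverse maps converge, so $k_\sigma$ is a genuine automorphism. Now take $\sigma\neq\tau$ that first differ at level $n$. The invariant carried by the construction is that $k_\sigma^{-1}k_\tau$ conjugates $g_{n+1}$ into something agreeing with $u\notin\mathrm{H}$ on a large set, while fixing the earlier $g_i$ up to $\mathrm{H}$. From this we conclude $k_\sigma^{-1}k_\tau\notin\mathrm{H}$. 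Hence the cosets $k_\sigma\mathrm{H}$ are pairwise distinct, and $\mathrm{H}$ has index $2^{\aleph_0}$.

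\textbf{Main obstacle.} The delicate step is the splitting step: ensuring that the conjugator supplied by (9) genuinely witnesses non-membership in $\mathrm{H}$, and that this non-membership survives passing to the limit. My first attempt is to fold every coset decision into the tuple. I would keep all the $g_i$ in $\mathrm{H}$ and require every later conjugator to lie in $\G_{(K_s)}$, where $K_s$ is chosen so that $g_i\upharpoonright_{K_s}\in Y$. Then (9) forces later conjugators to commute with the earlier generators on $K_s$, and so the difference $k_\sigma^{-1}k_\tau$ is determined modulo $\mathrm{H}$ at the splitting level. The bookkeeping needed to keep conditions (7)–(9) simultaneously satisfied at every node is where I expect most of the work to be.
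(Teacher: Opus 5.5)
There is a genuine gap, and it is exactly at the step you flag as the main obstacle. The separation $k_\sigma^{-1}k_\tau\notin\mathrm{H}$ is not supported by your construction, and the construction as set up cannot support it.

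First, the invariant you invoke (``conjugates $g_{n+1}$ into something agreeing with $u\notin\mathrm{H}$ on a large set'') could not decide membership in $\mathrm{H}$ even if it held. $\mathrm{H}$ is not open, and the non-member produced at the split was chosen to agree with the member $g_{n+1}$ on a set from $\mathcal{A}$. So agreement on such sets says nothing about cosets; you need an exact identity that survives to the limit.

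Second, your construction yields no such identity. Write $k_\sigma=k_sw_\sigma$ and $k_\tau=k_svw_\tau$. Then $k_\sigma^{-1}k_\tau=w_\sigma^{-1}vw_\tau$, where $w_\sigma,w_\tau$ are limits of products of later splitting conjugators, which by your own argument lie outside $\mathrm{H}$. Hence $v\notin\mathrm{H}$ gives nothing modulo $\mathrm{H}$. The structural reason is that later factors are multiplied on the right, so an element $x^{k_s}$ is preserved only if those factors centralize $x^{k_s}$. Applying (9) to the unconjugated tuple $(g_0,\dots,g_n,\cdot)$ makes them centralize $g_0,\dots,g_n$ exactly, but these are the wrong elements. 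Moreover, $g_i\in\mathrm{H}$ tells you nothing about $g_i^{k_s}$, since $k_s\notin\mathrm{H}$ in general. So a single node-independent sequence in $\mathrm{H}$ is the wrong bookkeeping.

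There is also an ordering problem at the split. (9) needs the restriction of the extended tuple to some $K$ to lie in $Y_K$, so $K$ must be chosen via (7) after the new element. The non-member must then come from that element's coset modulo $\G_{(K)}$. Your $u\in\G_{(K_s)}\setminus\mathrm{H}$ plays no role, and $u'$ is never required to lie outside $\mathrm{H}$.

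The missing idea, which is essentially what the paper does, is to make the chosen elements node-dependent and to feed the \emph{conjugated} tuple into (9). At node $t$ extending $s$, with $h_t$ already built:
\begin{itemize}
\item Apply Lemma 3.2(1) to the small-index subgroup $h_t\mathrm{H}h_t^{-1}$ to get $g_t$ with $g_t^{h_t}\in\mathrm{H}$ and the extended tuple in $\mathcal{X}$. By (2), $(g_\emptyset^{h_t},\dots,g_s^{h_t},g_t^{h_t})\in\mathcal{X}$.
\item By (7), choose $M_t\in\mathcal{A}$ extending $M_s$ and containing the next enumerated point and its $h_t$-preimage, on which this tuple restricts into $Y_{M_t}$.
\item By Lemma 3.2(2) with $\mathrm{O}=g_t^{h_t}\G_{(M_t)}$, pick $f'\notin\mathrm{H}$ with $(g_\emptyset^{h_t},\dots,g_s^{h_t},f')\in\mathcal{X}$.
\item Then (9), inverting if necessary, gives $h\in\G_{(M_t)}$ that centralizes every $g_{s\upharpoonright m}^{h_t}$ exactly and sends $g_t^{h_t}$ to $f'$. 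Put $h_{t^{\frown}0}=h_t$ and $h_{t^{\frown}1}=h_th$.
\end{itemize}
Every later right factor then centralizes the relevant conjugated elements. By continuity, $g_t^{h_\sigma}=g_t^{h_{t^{\frown}i}}$ for each branch $\sigma$ through $t^{\frown}i$. So for $\sigma$ through $t^{\frown}0$ and $\tau$ through $t^{\frown}1$ you get $g_t^{h_\sigma}\in\mathrm{H}$ and $g_t^{h_\tau}=f'\notin\mathrm{H}$. If $h_\tau=h_\sigma k$ with $k\in\mathrm{H}$, then $g_t^{h_\tau}=k^{-1}g_t^{h_\sigma}k\in\mathrm{H}$, a contradiction. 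Hence the cosets $h_\sigma\mathrm{H}$ are pairwise distinct.
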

\begin{proof}[Sketch of proof]
 Suppose $\mathrm{H}$	is a nonopen subgroup of $\G$ whose index is less than $2^{\aleph_{0}}$.  By using Lemmas 3.1 and 3.2, we will inductively build  sequence $(M_{s}: \ s\in 2^{<\omega})$ of elements of $\mathcal{A}$, and sequences $(g_{s}: \ s\in 2^{<\omega})$ and $(h_{s}: \ s\in 2^{<\omega})$ of elements of $\G$ such that:
\begin{itemize}
\item[(1)] For every $\sigma\in 2^{\omega}$ it holds that $M=\bigcup_{n>0}M_{\sigma\upharpoonright_{n}}$.
\item[(2)] For every $s,t\in 2^{<\omega}$ if $s\sqsubset_{e} t$ (i.e., $s$ is an initial segment of $t$) then $M_{s}\subseteq M_{t}$.
\item[(3)] For every  $s\in 2^{<\omega}$, the tuple $(g_{\emptyset},g_{s\upharpoonright_{1}},g_{s\upharpoonright_{2}},...,g_{s})$ is in $\mathcal{X} $.
\item[(4)] For every $s,t,t'\in 2^{<\omega}$ if $s\sqsubset_{e} t$  and $s\sqsubset_{e} t'$ then $h_{t}\upharpoonright_{M_{s}}=h_{t'}\upharpoonright_{M_{s}}$.
\item[(5)] For every $s,t,t'\in 2^{<\omega}$ if $s\sqsubset_{e} t\sqsubset_{e} t'$  then $g_{s}^{h_{t}}=g_{s}^{h_{t'}}$
\item[(6)] $g_{s}^{h_{s^{\smallfrown}0}}\in H$ and $g_{s}^{h_{s^{\smallfrown}1}}\notin H$, for every $s\in 2^{<\omega}$ (here, by $ s^{\smallfrown}0$ and $s^{\smallfrown}1$, we mean the sequences which is obtained from appending $0$ and $1$ to $s$, respectively).
\end{itemize}
Suppose we have built such sequences. As a result, for every $\sigma\in 2^{\omega}$ the sequence $\{h_{\sigma\upharpoonright_{n}}: \ n\in\omega\}$ is Cauchy. So, we put ${h_{\sigma}:=\lim\limits_{n\rightarrow\infty}h_{\sigma\upharpoonright_{n}}}$. From the above statements, we conclude that $h_{\sigma}H\neq h_{\tau}H$ for every distinct $\sigma,\tau\in 2^{\omega}$, which contradicts the assumption that $[\G:\mathrm{H}]<2^{\aleph_{0}}$.

In order to construct the  aforementioned sequences, suppose $\{a_{n}: \ n\in\omega\}$ is an enumeration of $M$.
For the first step of the inductive construction, since we have assumed that ${[\G:\mathrm{H}]<2^{\aleph_{0}}}$, by Lemma 3.2(1), there exists some $g_{\emptyset}\in\mathrm{H}\cap\mathcal{X}$. Then by the definition of a Lascar generic system, there exists some $M_{\emptyset}\in \mathcal{A}$ containing $a_{0}$ such that $g_{\emptyset}\upharpoonright_{M_{\emptyset}}\in Y_{M_{\emptyset}}$. Moreover, put $h_{\emptyset}=h_{0}=id$. Then, by Lemma 3.2(2) there exists some $f\in g_{\emptyset}\G_{(M_{\emptyset})}\setminus \mathrm{H}$ such that $f\in\mathcal{X}$. Again, by the definition of a Lascar generic system, there exists some $h_{1}\in\G_{(M_{\emptyset})}$ such that $g_{\emptyset}^{h_{1}}=f$.

For the induction step, suppose $s\in 2^{<\omega}$  is given. Let $n\in\omega$ be the length of $s$  and suppose $g_{s}$,  $M_{s}$, $h_{s}$, $h_{s^{\smallfrown}0}$, and $h_{s^{\smallfrown}1}$ has been built. Let $t:=s^{\smallfrown}0$ or $t:=s^{\smallfrown}1$. So $[\G:\mathrm{H}^{h_{t}}]<2^{\aleph_{0}}$. Then, by Lemma 3.2(1) there exists some $g_{t}\in \mathrm{H}^{h_{t}}$ such that ${(g_{s\upharpoonright_{0}},g_{s\upharpoonright_{1}},...,g_{s},g_{t})\in\mathcal{X}}$. Then, put ${h_{t^{\smallfrown}0}:=h_{t}}$. By the definition of a Lascar generic system $(g^{h_{t}}_{s\upharpoonright_{0}},g^{h_{t}}_{s\upharpoonright_{1}},...,g^{h_{t}}_{s},g_{t}^{h_{t}})\in\mathcal{X} $. As a result, there exists some $K\in\mathcal{A}$  such that ${(g^{h_{t}}_{s\upharpoonright_{0}}\upharpoonright_{K},g^{h_{t}}_{s\upharpoonright_{1}}\upharpoonright_{K},...,g^{h_{t}}_{s}\upharpoonright_{K},g_{t}^{h_{t}}\upharpoonright_{K})\in Y_{K}} $ and $K$ contains $h_{t}(a_{n+1})$ and $h_{t}(M_{s})$.  Now, let $M_{t}:=h_{t}^{-1}(K)$. Again, by Lemma 3.2(2) there exists some $f'\in g_{t}^{h_{t}}\G_{(M_{t})}\setminus\mathrm{H}$ such that $ (g^{h_{t}}_{s\upharpoonright_{0}},g^{h_{t}}_{s\upharpoonright_{1}},...,g^{h_{t}}_{s},f')\in\mathcal{X}$. Thus, by the definition of a Lascar generic system there exists some $h\in\G_{(M_{t})}$ such that $f'=g_{t}^{h_{t}h}$ and $g_{s\upharpoonright_{m}}^{h_{t}}=g_{s\upharpoonright_{m}}^{h_{t}h}$ for all $m\leq n$. Finally, put $h_{t^{\smallfrown}1}:=h_{t}h$.
\end{proof}

\subsection{The SIP for $(\A(\M))_{(I)}$}

Through this subsection, suppose $\M$ is a countable recursively saturated model of $\pa$, $I $ is a strong cut of $\M$, and  $\G:=\A(\M)$. This subsection is devoted to constructing a Lascar generic system for the Polish group $\G_{(I)}$ (as a closed subgroup of the Polish group $\G$). To formulate the problem in the framework introduced in the previous subsection, let  $\mathbb{L}$ be the language of $\mathbb{L}_{A}$ augmented with countably many   constant symbols naming elements of  $I$.  Then by considering the expansion $\M^{*}$ of $\M$ to the language $\mathbb{L}$, the automorphism group  $\A(\M^{*})$ is precisely $\G_{(I)}$. Our goal is to construct a Lascar generic system for this automorphism group. For this purpose, we will generalize the notions of an existentially closed  and a Lascar generic automorphism:

 \begin{dfn}
	 ($ n\in\omega$) Suppose   $ \K\prec\M $ is $I$-small, and $ (g_{0},...,g_{n})\in(\Aa)^{n+1} $. We say $ (g_{0},...,g_{n}) $ is \textit{$I$-existentially closed} (abbreviated by \textit{$I$-e.c.}) if for every tuple  $ {(f_{0},...,f_{n})\in(\G_{(I)})^{n+1}} $ such that $f_{k}\upharpoonright_{K}=g_{k}\upharpoonright_{K}$ for all $k=0,...,n$,  for every $\mathbb{L}_{A}$-formula $ \varphi(x,y_{0},...,y_{n},\bar{z}) $, and for each $ \bar{c}\in K $, if $ \M\models\exists x \ \varphi(x,f_{0}(x),...,f_{n}(x),\bar{c}) $, then $ {\M\models\varphi(d,g_{0}(d),...,g_{n}(d),\bar{c}) }$ for some $ d\in K $.
	
	Let $\mathrm{EC}_{I}(\K)$ be the family of all finite tuples of  $I$-e.c. elements of $\Aa$.
\end{dfn}

First, using an argument analogous to the one establishing the existence of existentially closed automorphisms when the standard cut is strong in $\M$, we will prove that if $I$ is a strong cut of $\M$ there exist plenty of $I$-existentially closed automorphisms in $\M$:
\begin{theorem}
	($n\in\omega$)	Suppose $ \M $ is a countable  recursively saturated model of $ \pa $, $I$ is a strong cut of $\M$, and $ a,b_{0},...,b_{n}\in M $ such that $ \tp(a,i)=\tp(b_{k},i) $ for every $k=0,...,n$ and for all $i\in I$. Then there exists some $I$-small elementary submodel $ \K $ of $\M$  and some $I$-e.c.  tuple $ (g_{0},...,g_{n})\in(\Aa)^{n+1} $ such that $ a,b_{0},...,b_{n}\in K $ and $ g_{k}(a)=b_{k} $ for every $k=0,...,n$.
\end{theorem}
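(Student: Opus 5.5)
We follow Lascar's proof of Theorem 2.6, replacing ``small'' by ``$I$-small'' and arithmetic saturation by strength of $I$. The construction is a back-and-forth of length $\omega$ inside $\M$, carried out at the level of $I$-small elementary submodels and their partial automorphisms.

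\textbf{Setting up the target model.} Let $\K:=\KK(\M;I\cup\{a,b_{0},\dots,b_{n}\})$. By Lemma 2.2(1), $\K$ is $I$-small. Enumerate in $\M$ the triples consisting of a formula index $r$, a parameter index, and a tuple index. Because $\K=\{(e)_{i}:i\in I\}$ for some $e$, all these data are indexed by elements of $I$ via the Convention's canonical enumerations $\bm{\phi}_{r}$ and $\textbf{t}_{r}$.

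\textbf{Building the maps.} Since $\tp(a,i)=\tp(b_{k},i)$ for all $i\in I$, Theorem 2.3 (Schmerl's form) together with a standard back-and-forth using recursive saturation of $(\M;S)$ gives automorphisms $f_{k}\in\G_{(I)}$ with $f_{k}(a)=b_{k}$. Indeed, the map $\textbf{t}_{r}(a,\bar{i})\mapsto\textbf{t}_{r}(b_{k},\bar{i})$ is elementary and fixes $I$, so it extends to $\M$ because $\M$ is countable and recursively saturated and $I\subseteq$ the domain.

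We then define, by recursion on $j\in I$ (coded in $\M$), finite-type conditions recording approximations to $g_{0},\dots,g_{n}$ on $\K$. At stage $j$ we handle the requirement coded by $j$, which is a formula $\varphi$ and parameters $\bar{c}\in K$. If some extension of the current partial type admits a witness $x$ with $\varphi(x,f_{0}(x),\dots,f_{n}(x),\bar{c})$ realized by a tuple of automorphisms in $\G_{(I)}$ agreeing with what has been built so far, we add such a witness $d$ to the domain. This mimics Lascar's argument, where one uses a satisfaction class to speak about ``types over $\K$'' uniformly.

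\textbf{Main obstacle.} The key step is making the outcome of this recursion an $I$-small elementary submodel, while still having every relevant $\exists x\,\varphi$-instance decided inside it. Lascar uses that $\NN$ is strong to pick a nonstandard bound separating ``true'' requirements from ``false'' ones. Here we use strength of $I$ directly. Consider the coded function $F$ (definable in $(\M;S)$, hence coded on $[0,c]$ for some $c>I$) which sends a requirement index $i$ to the least stage, or witness size, at which the requirement is met, or to a large default value otherwise. Strength yields $e>I$ with $F(i)\in I\iff F(i)<e$ for all $i\in I$. This $e$ lets us define inside $\M$ a single sequence, coded by one element, of the witnesses for exactly those requirements with $F(i)\in I$.

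The final model $\K'$ is $\KK(\M;I\cup\{\text{this code}\})$, which is $I$-small by Lemma 2.2(1) and contains $a,b_{0},\dots,b_{n}$. The $g_{k}$ are the restrictions of the limiting $f_{k}$, adjusted via Theorem 2.3 so that each $g_{k}$ maps $\K'$ onto $\K'$ and extends to $\G_{(I)}$; hence each $g_{k}\in\A_{(I)}^{\M}(\K')$.

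\textbf{Verifying $I$-e.c.} Suppose $f'_{k}\in\G_{(I)}$ extend $g_{k}$ and $\M\models\exists x\,\varphi(x,f'_{0}(x),\dots,\bar{c})$ with $\bar{c}\in K'$. Then the corresponding requirement index lies in $I$ and was ``live'' at some stage. By the choice of $e$, it was met with a witness $d\in K'$. Lemma 2.2(3) is used to check that the bookkeeping sets, which are bounded below some $i_{0}\in I$, are definable in $\K'$, so the construction is internal to $\K'$.

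I expect the delicate point to be this uniform definability of the requirements: the $f'_{k}$ are arbitrary automorphisms, so the condition ``$\varphi$ can be realized by some extension'' must be expressed as a type condition over $\K'$. Recursive saturation of $(\M;S)$ turns it into a definable predicate to which strength can be applied.
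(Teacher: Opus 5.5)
There is a genuine gap exactly at the point you flag as delicate: the recursion is never made definable, so the function $F$ you apply strength to is not actually defined. Your ``live'' test quantifies over tuples in $\G_{(I)}$ that extend the current stage. Recursive saturation of $(\M;S)$ only realizes recursive types; it does not turn such a condition into a predicate of $(\M;S)$. The paper supplies three ingredients you are missing:
\begin{itemize}
\item[(i)] It fixes at the outset an $I$-small $(\K;S')\prec(\M;S)$ with $K=\{(\alpha)_i:i\in I\}$. Elementarity in the \emph{expanded} language is what later lets witnesses found in $\M$ be pulled back into $K$; your $\KK(\M;I\cup\{a,\bar b\})$ is only elementary for $\mathbb{L}_A$.
\item[(ii)] A stage is coded by two sequences $(\alpha)_u,(\alpha)_v$, and it extends to some element of $\G_{(I)}$ iff these codes have the same type over $I$.
\item[(iii)] That relation is made definable by applying strength \emph{first} to the agreement function $\Lambda(\langle i,j\rangle)=\Delta((\alpha)_i,(\alpha)_j)$. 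This gives $\epsilon>I$ such that ``$\Lambda>\epsilon$'' serves as a definable stand-in.
\end{itemize}
Only then can the back-and-forth be written as the definable recursion $\Omega,\bar\Omega$ via $\Theta$. That recursion chooses at each stage a \emph{pair} $(x,y)$, an element together with its image. You only add $d$ to the domain and never specify $g_k(d)$.

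The endgame also does not work. There are no ``limiting $f_k$'': your $f_k$ are fixed at the start and never enter the bookkeeping. Theorem 2.3 concerns fixed-point sets and is not what produces them; extending a map whose domain contains $I$ needs a type-over-$I$ argument that itself uses strength. Passing afterwards to $\K'=\KK(\M;I\cup\{\text{code}\})$ creates two further problems. Requirements with parameters in $K'\setminus K$ were never handled, and nothing makes $g_k$ a bijection of $K'$. Theorem 2.3 cannot ``adjust'' $g_k$ into an automorphism of $\K'$, and any adjustment would destroy both $g_k(a)=b_k$ and the $I$-e.c.\ property.

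The paper avoids this by staying inside the single model $\K$:
\begin{itemize}
\item[(1)] It shows every $\Omega(i)$ lies in $I$, using the strength witnesses $\varepsilon,\eta$, elementarity of $(\K;S')$, overspill in $(\K;S')$, and Lemma 2.2(3). Hence all witnesses lie in $K$.
\item[(2)] It uses strength plus recursive saturation once more to realize $(\lambda,\xi)$ coding the whole $I$-sequence with $\tp(\lambda,i)=\tp(\xi,i)$ for $i\in I$. So $g:(\lambda)_i\mapsto(\xi)_i$ extends to $\G_{(I)}$.
\item[(3)] $\mathrm{Dom}(g)=\mathrm{Range}(g)=K$ then comes for free from the e.c.\ requirements applied to formulas such as $x=(\alpha)_i$.
\end{itemize}
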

\begin{proof}
	We will prove the theorem for the case $n=0$, and the remaining cases follow by a similar argument.  So for simplicity, let $b:=b_{0}$.\\
		Let $S$ be an inductive satisfaction class for $\M$ such that $(\M;S)$ is also recursively saturated. So by Lemma 2.2(1), there exists some  $I$-small elementary submodel  $(\K;S')$  of $(\M;S)$ containing $a,b$.  Let $\K=\{(\alpha)_{i}: \ i\in I\}$ for some $\alpha\in M$. Moreover, let $a=(\alpha)_{i_{a}}$ and $b=(\alpha)_{i_{b}}$ for some $i_{a},i_{b}\in I$. Now, we  define:
	$$\Delta(x,y):=\mathrm{max}\{z: \ \forall \langle w,i\rangle<z \ S(\bm{\phi}_{w}(x,i))\leftrightarrow S(\bm{\phi}_{w}(y,i))\}\footnote{Throughout this paper, we adopt the convention that $\max(\emptyset)=0$.};$$ 
	and
	$$\Lambda(\langle i,j\rangle):=\Delta((\alpha)_{i},(\alpha)_{j})\footnote{For case $n>0$, we define: $$\Lambda(\langle i_{0},i_{1},...,i_{n+1}\rangle):=\min\{\Delta((\alpha)_{i_{k}},(\alpha)_{i_{m}}): \ 0\leq k\leq m\leq n+1  \}. $$}. $$
	
	Since $I$ is strong in $\M$,	there exists some $\epsilon>I$ such that $\Lambda(i)\in I$ iff $\Lambda(i)<\epsilon $ for all $i\in I$.  Then, let	$\langle\varphi_{r}((\alpha)_{i},x,y): \  r,i\in M\rangle$ be a  canonical enumeration of $\mathbb{L}_{A}$-formulas of the given form with $(\alpha)_{i}$s as parameters.  Moreover, let $\Theta(i,u,v,x,y)$ be the following $\mathbb{L}$-formula (where $\mathbb{L}:=\mathbb{L}_{A}\cup\{S\}$):
	$$  \left(\begin{array}{c}
		S(\varphi_{(i)_{0}}((\alpha)_{(i)_{1}},(\alpha)_{x},(\alpha)_{y})) \ \wedge \\ \forall z,w  \left(\begin{array}{c}
	   	(\alpha)_{z}=(\alpha)_{u}^{\frown}(\alpha)_{x}\ \wedge  \
		(\alpha)_{w}=(\alpha)_{v}^{\frown}(\alpha)_{y}
	\end{array}\right)  
	\rightarrow\Lambda(\langle z,w\rangle)>\epsilon\end{array}\right). $$
	Now,  we inductively define  $\mathbb{L}$-terms $\Omega(i)$ and $\bar{\Omega}(i)$ in $\M$ as follows:
	\begin{center}
	$\Omega(0):=\langle i_{a},i_{b}\rangle$, and\\ $\bar{\Omega}(0):=\langle \textbf{u},\textbf{v}\rangle\in I$ such that $(\alpha)_{\textbf{u}}=\langle a\rangle$ and $(\alpha)_{\textbf{v}}=\langle b\rangle$.
	\end{center} 
	and
	\begin{equation*}
		\Omega(i+1):=  \left\{
		\begin{array}{rl}
			\mu_{\langle x,y\rangle}   \Theta( i,(\bar{\Omega}(i))_{0},(\bar{\Omega}(i))_{1},x,y) & \text{if } \scriptstyle(\M;S)\models\exists x,y \    \Theta( i,(\bar{\Omega}(i))_{0},(\bar{\Omega}(i))_{1},x,y)\\
			\langle i_{a},i_{b}\rangle \   \ \ \ \ \ \ \ \ \ \ \ \ \ \ \ \ \ \ \ \ \ \ \ \ \ \ \ \ \ \ \    & \text{o.w.} 
		\end{array} \right.
	\end{equation*}
\begin{equation*}
	\bar{\Omega}(i+1):= \left\{
	\begin{array}{rl}
	\mu_{\langle u,v \rangle}  \left(\begin{array}{c}
		(\alpha)_{u}=((\alpha)_{(\bar{\Omega}(i))_{0}})^{\frown}(\alpha)_{(\Omega(i+1))_{0}}  \ \wedge \\ (\alpha)_{v}=((\alpha)_{(\bar{\Omega}(i))_{1}})^{\frown}(\alpha)_{(\Omega(i+1))_{1}}
	\end{array}\right) & \text{if }\scriptsize\exists u,v 
 \left(\begin{array}{c}
	(\alpha)_{u}=((\alpha)_{(\bar{\Omega}(i))_{0}})^{\frown}(\alpha)_{(\Omega(i+1))_{0}}  \ \wedge \\ (\alpha)_{v}=((\alpha)_{(\bar{\Omega}(i))_{1}})^{\frown}(\alpha)_{(\Omega(i+1))_{1}}
\end{array}\right)\\
\epsilon \ \ \ \ \ \ \ \ \ \ \ \ \ \ \ \ \ \ \ \ \ \ \ \ \ \ \ \ \ \ \ \ \ \ \ \ \ \ \ \ \ \ \ \ \ \ \ \ \ \ \ \ \ \ \ \ \  & \text{o.w.}
\end{array} \right.
\end{equation*}	
	Again, since $I$ is strong in $\M$	there exists some $\eta,\varepsilon>I$ such that $\Omega(i)\in I$ iff ${\Omega(i)<\varepsilon} $, and  $\bar{\Omega}(i)\in I$ iff $\bar{\Omega}(i)<\eta $ for all $i\in I$. Note that $\Omega(i)<\varepsilon $ and $\bar{\Omega}(i)<\eta$ for all $i\in I$: we use induction to show this fact. Suppose we have shown $\Omega(i)<\varepsilon $ and $\bar{\Omega}(i)<\eta$ for $i\in I$. By Lemma 2.2(3), it suffices to show that $\Omega(i+1)<\varepsilon$. Suppose ${(\M;S)\models\exists x,y \    \Theta( i,(\bar{\Omega}(i))_{0},(\bar{\Omega}(i))_{1},x,y)}$ (for the other case, it is clear that $\Omega(i+1)<\varepsilon$). So, for every $s\in I$ it holds that:\\
	
$(1): \ \ \ $	$(\M;S)\models\exists x,y  \left(\begin{array}{c}
		S(\varphi_{(i)_{0}}((\alpha)_{(i)_{1}},x,y)) \ \wedge \\ 
		\forall u,v  \left(
		\begin{array}{c}
			u=((\alpha)_{(\bar{\Omega}(i))_{0}})^{\frown}x \ \wedge \\ v=((\alpha)_{(\bar{\Omega}(i))_{1}})^{\frown}y	\end{array}\right)\rightarrow
			\forall \langle r,z\rangle<s \
		 S(\bm{\phi}_{r}(u,z))\leftrightarrow S(\bm{\phi}_{r}(v,z)) 
		\end{array}\right)$.\\
	
	So since $(\K;S')\prec(\M;S)$, statement (1) implies that for every $s\in I$ we have:\\
	
	$(2): \ \ \ $	$(\K;S')\models\exists x,y  \left(\begin{array}{c}
		S'(\varphi_{(i)_{0}}((\alpha)_{(i)_{1}},x,y)) \ \wedge \\ 
		\forall u,v \left(
		\begin{array}{c}
			u=((\alpha)_{(\bar{\Omega}(i))_{0}})^{\frown}x \ \wedge \\ v=((\alpha)_{(\bar{\Omega}(i))_{1}})^{\frown}y	\end{array}\right)\rightarrow
		\forall \langle r,z\rangle<s \
		S'(\bm{\phi}_{r}(u,z))\leftrightarrow S'(\bm{\phi}_{r}(v,z)) 
	\end{array}\right)$.\\
	
As a result, by using  Overspill principle over $ I$ in $(\K;S')$ and Lemma 2.2(3), we infer that $\Omega(i+1)\in I$.

		Now, we define the following recursive type for every $s\in M$:
	\begin{align*}
		p_{s}(x,y):=&\{\forall i<s \ \phi(x,i)\leftrightarrow\phi(y,i): \ \phi \text{ is an } \mathbb{L}_{A}\text{-formula}\}\cup\\&
	\left\lbrace	
		\forall i<s \ 
	 ((x)_{i}=(\alpha)_{(\Omega(i))_{0}} \ \wedge\ (y)_{i}=(\alpha)_{(\Omega(i))_{1}}) 
	\right\rbrace.
\end{align*}

	First, we will prove that there exists some  $s>I$ such that  $p_{s}(x,y)$ is finitely satisfiable. For this purpose, define:
	\begin{center}
		$\Psi(m):= \max\left\lbrace s: \exists x,y \ 	\psi(x,y,s,m,\alpha) \right\rbrace$, where $\psi(x,y,s,m,\alpha) $ is the following $\mathbb{L}$-formula:
		$$\forall i<s \ \forall z<m \left(\begin{array}{c}
			(S(\bm{\phi}_{z}(x,i))\leftrightarrow S(\bm{\phi}_{z}(y,i))) \ \wedge\\
				 ((x)_{i}=(\alpha)_{(\Omega(i))_{0}} \ \wedge\ (y)_{i}=(\alpha)_{(\Omega(i))_{1}}) 
		\end{array}\right)  .$$
	\end{center}
	
	Let $e>I$ be the witness of strength of $I$ in $\M$ for the definable function $\Psi$. We will  show that $p_{e}(x,y)$ is finitely satisfiable. For this purpose, it suffices to prove that $\Psi(m)>e$ for every $m\in\omega$. So, suppose $m\in\omega$ be arbitrary. By the way we defined $\Omega(i)$ and by using induction in $(\M;S)$, we can show  that ${(\M;S)\models\exists x,y \ \psi(x,y,s,m,\alpha)}$ for every $s\in I$. So, by Overspill principle over $I$ in $(\M;S)$, we have $\Psi(m)>e$.
	
	As a result,  we can find some $ \lambda,\xi\in M $ such that $(\lambda,\xi)$ realizes $p_{e}(x,y)$. Now, put:
	$$ g:=\bigcup_{i\in I}(\lambda)_{i}\mapsto(\xi)_{i} .$$ 
	Then, we have the following claim:
	\begin{center}
		\textit{Claim:} For every $m\in\omega$ and for each $i\in I$, if  $\M\models\exists x \ \varphi_{m}((\alpha)_{i},x,f(x))$ for some $f\in\G_{I}$ such that $g\subseteq f$, then $\M\models\varphi_{m}((\alpha)_{i},(\lambda)_{j},(\xi)_{j})$ for some $j\in I$.
	\end{center}
	
The claim implies that, first $\mathrm{Dom}(g)=K$:  note that since $\Omega(i)\in I$ for all $i\in I$, it holds that $\mathrm{Dom}(g)\subseteq K $. To see $K\subseteq \mathrm{Dom}(g)$, let $i\in I$ be arbitrary and consider  the $\mathbb{L}_{A}$-formula $(x=(\alpha)_{i} \ \wedge \ y=y)$ and $f\in\G_{(I)} $ such that $f(\lambda)=\xi$ in the claim. So we conclude that $(\alpha)_{i}\in \mathrm{Dom}(g)$. In a similar way, $\mathrm{Range}(g)=K$. Secondly, the claim implies that $g\in\Aa$ is $I$-e.c.
	
	In order to prove the claim, suppose we have the assumptions of the Claim. Thus, by iterating similar arguments to statements (1) and (2), we conclude that $\M\models\varphi_{m}((\alpha)_{i},(\lambda)_{j},(\xi)_{j})$ where $j:=\Omega(\langle m,i\rangle+1)$.
\end{proof}
Now, we will adapt the notion of a Lascar generic automorphism:	
\begin{dfn}
($n\in\omega$)	A tuple  $ (g_{0},...,g_{n})\in(\G_{(I)})^{n+1} $ is called \textit{Lascar $I$-generic} if:
	\begin{itemize}
		\item[(i)] For every finite tuple $ \bar{a}\in M $ there exists some $I$-small elementary submodel $ \K $ of $\M$ such that $ \bar{a}\in K $ and $ (g_{0}\upharpoonright_{K},...,g_{n}\upharpoonright_{K})\in(\Aa)^{n+1}  $ is $I$-e.c.
		\item[(ii)] For every two $I$-small elementary submodel $ \K$ and $ \mathcal{L}$ of $\M$, and for each tuple $ (f_{0},...,f_{n})\in(\A_{(I)}^{\M}(\mathcal{L}))^{n+1} $, if  the following conditions $(\spadesuit)$ hold, then there exists some $ h\in\G_{(K)} $ such that  $ f_{k}=g_{k}^{h}\upharpoonright_{L} $ for all $k=0,...,n$.
		$$(\spadesuit): \ \ \left(\begin{array}{c}
		 \K\prec\mathcal{L},\\
		 {g_{k}\upharpoonright_{K}=f_{k}\upharpoonright_{K}\in\Aa }   \text{ for all } k=0,...,n,  \text{ and }	\\
		(g_{0}\upharpoonright_{K},...,g_{n}\upharpoonright_{K})  \text{ is } I\text{-e.c.}
		\end{array}\right).$$
	\end{itemize}
Let $ \LG_{I}(\M) $ indicates to the class of all finite tuples of Lascar generic automorphisms in $ \G_{I} $. When there is no risk of confusion, we simply write $ \LG_{I} $ .
\end{dfn}

Clearly, $\LG_{\NN}=\LG$. Moreover,
it is easy to verify that $\LG_{I}$  is closed under conjugacy in $\G_{(I)}$. As in the case of Lascar generic automorphisms, we have the following criterion for two Lascar $I$-generic automorphism to be conjugates:
\begin{theorem}
($n\in\omega$) 	Suppose $ \M $ is a countable  recursively saturated model of $ \pa $ and $I$ is a strong cut of $\M$.  Moreover, let  $(g_{0},...,g_{n})$ and $(f_{0},...,f_{n})$ be two tuples of Lascar $I$-generic automorphisms of $\M$,  $\K$ be an $I$-small elementary submodel of $\M$ such that ${g_{k}\upharpoonright_{K}=f_{k}\upharpoonright_{K}\in\Aa}$ for all $k=0,...,n$, and $(g_{0}\upharpoonright_{K},...,g_{n}\upharpoonright_{K})$ be $I$-e.c. Then there exists some $h\in\G_{(K)}$ such that $g_{k}=f_{k}^{h}$ for all $k=0,...,n$.
\end{theorem}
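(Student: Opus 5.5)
This is a back-and-forth argument. It uses clause (ii) of the definition of Lascar $I$-genericity alternately for the two tuples, together with clause (i) and Lemma 2.2(1), to build an increasing chain of $I$-small elementary submodels exhausting $M$. Fix an enumeration $\{a_m:\ m\in\omega\}$ of $M$. We construct $I$-small elementary submodels $\K=\K_0\prec\K_1\prec\K_2\prec\cdots$ of $\M$ and automorphisms $h_0=\mathrm{id},h_1,h_2,\ldots\in\G_{(K)}$. The inductive hypotheses are:
\begin{itemize}
\item[(a)] $g_k\upharpoonright_{K_m}=f_k^{h_m}\upharpoonright_{K_m}$ for all $k\le n$;
\item[(b)] this common restriction lies in $(\A^{\M}_{(I)}(\K_m))^{n+1}$ and is $I$-e.c.;
\item[(c)] $h_{m+1}\upharpoonright_{K_m}=h_m\upharpoonright_{K_m}$ and $h_{m+1}^{-1}\upharpoonright_{h_m(K_m)}=h_m^{-1}\upharpoonright_{h_m(K_m)}$;
\item[(d)] $a_m\in K_{2m+1}$ and $a_m\in h_{2m+2}(K_{2m+2})$.
\end{itemize}
Since $\LG_I$ is closed under conjugation in $\G_{(I)}$, each $(f_0^{h_m},\ldots,f_n^{h_m})$ is again Lascar $I$-generic. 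This lets us apply clause (ii) to the conjugated tuple at every stage.

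For the forth step at an odd stage, start from $\K_m$ satisfying (a) and (b). By clause (i) for $(g_0,\ldots,g_n)$, choose an $I$-small $\mathcal{L}$ containing $a_j$ on which the restriction of $\bar g$ is $I$-e.c. I need $\mathcal{L}$ to also contain $K_m$. Here I would modify the argument of clause (i): by Lemma 2.2(1), $\KK(\M;I\cup\{\bar b\})$ is $I$-small, so one can first pass to a model containing both the code of $K_m$ and $a_j$. Then I would rerun the construction of Theorem 3.4 with $\bar g$ in place of the generic extension, to obtain an $I$-e.c. restriction on a larger $I$-small model that is closed under $\bar g$. Call the result $\K_{m+1}\supseteq K_m\cup\{a_j\}$, with $\bar g\upharpoonright_{K_{m+1}}\in(\A^{\M}_{(I)}(\K_{m+1}))^{n+1}$ $I$-e.c. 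Now apply clause (ii) to the generic tuple $\bar f^{h_m}$, with the pair $\K_m\prec\K_{m+1}$ and the target tuple $\bar g\upharpoonright_{K_{m+1}}$. Condition $(\spadesuit)$ holds by (a) and (b), so there is $u\in\G_{(K_m)}$ with $\bar f^{h_mu}\upharpoonright_{K_{m+1}}=\bar g\upharpoonright_{K_{m+1}}$. Set $h_{m+1}=h_mu$. Since $u$ fixes $K_m$ pointwise, (c) holds, and (a), (b) hold at level $m+1$.

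For the back step at an even stage, we need $a_j$ in the image side. Apply the same procedure with the roles of $\bar g$ and $\bar f^{h_m}$ swapped. Choose $\K_{m+1}\succ\K_m$ containing $h_m^{-1}(a_j)$ on which $\bar f^{h_m}$ restricts to an $I$-e.c. tuple. Use clause (ii) for $\bar g$ to get $v\in\G_{(K_m)}$ with $\bar g^{v}\upharpoonright_{K_{m+1}}=\bar f^{h_m}\upharpoonright_{K_{m+1}}$. Then $\bar f^{h_mv^{-1}}$ agrees with $\bar g$ on $v(K_{m+1})\supseteq K_m$, and $v(K_{m+1})$ is again $I$-small with the restriction $I$-e.c. (the $I$-e.c. property transfers along $v$). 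Replace $\K_{m+1}$ by $v(\K_{m+1})$ and set $h_{m+1}=h_mv^{-1}$. A careful reindexing of $h_m$ versus $h_m^{-1}$ arranges that $M=\bigcup_m h_m(K_m)$.

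Finally, the $h_m$ agree on the increasing union $\bigcup K_m=M$ by (c) and (d). The same holds for their inverses on $\bigcup h_m(K_m)=M$. So $h=\lim h_m$ exists in $\G$, lies in $\G_{(K)}$, and fixes $I\subseteq K$ pointwise by Lemma 2.2(2). By (a), $g_k=f_k^{h}$ on every $K_m$, hence everywhere.

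The main obstacle is the extension step. We need an $I$-small $\K_{m+1}\succ\K_m$ containing a prescribed element on which a given Lascar $I$-generic tuple restricts to an $I$-e.c. tuple. Clause (i) as stated only gives some model containing a finite tuple, not one extending a prescribed $\K_m$. I would first try to absorb $K_m$ by including the code $\alpha$ of $K_m$ in the finite tuple $\bar a$. Then I would argue that the model given by clause (i) contains $K_m=\{(\alpha)_i:\ i\in I\}$, since it contains $\alpha$ and $I$ (Lemma 2.2(2)) and is closed under the coordinate function. This also yields $\K_m\prec\K_{m+1}$, because both are elementary in $\M$.
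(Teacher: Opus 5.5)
Your proposal is correct and is essentially the paper's back-and-forth: you apply clause (ii) alternately for $\bar g$ and for the conjugated generic tuple $\bar f^{h_m}$, and you get the extending $I$-e.c.\ model from clause (i) by putting the code of $K_m$ into the finite tuple and using Lemma 2.2(2), a step the paper uses without comment. Your single running conjugator with the transported back step (which gives $a_j\in h_{m+1}(K_{m+1})=h_m(K')$ directly, so no reindexing is needed) is slightly more careful than the paper's two separate products $h_1h_3\cdots$ and $h_0h_2\cdots$, whose sketch never explicitly ensures that the images exhaust $M$; your detour about rerunning Theorem 3.4 is unnecessary.
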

\begin{proof}
The proof is identical to that given in \cite{ksb} for the analogous result concerning Lascar generic automorphisms.	We will prove the theorem for the case $n=0$, and the remaining cases follow by a similar argument. For simplicity put $g:=g_{0}$ and $f:=f_{0}$. Let $ \{a_{n}: n\in\omega\} $ be an enumeration of elements of $ M $. Since $ g $ is Lascar $ I $-generic, there exists some $ I $-small elementary submodel  $ \K_{0} $ of $\M$ containing $ K $ and $ a_{0} $ such that $ g\upharpoonright_{K_{0}}\in \A^{\M}(\K_{0}) $ is $ I $-e.c. So by the definition of an Lascar $ I $-generic automorphism, there exists some $ h_{0}\in\G_{(K)} $ such that $ g\upharpoonright_{K_{0}}\subseteq f^{h_{0}} $. Again, since $ f^{h_{0}} $ is Lascar $ I $-generic, there exists some $ I $-small $ \K_{1}\prec\M $ containing $ K_{0} $ and $ a_{1} $ such that $ f^{h_{0}}\upharpoonright_{K_{1}}\in \A^{\M}(\K_{1}) $ is $ I $-e.c. Then, there exists some $ h_{1}\in\G_{(K_{0})} $ such that $ f^{h_{0}}\upharpoonright_{K_{1}}\subseteq g^{h_{1}} $. By iterating this argument, we will find a sequence  $ \{h_{m}: \ m\in\omega\} $ of automorphisms of $ \M $ such that the sequences $ \{h_{1}h_{3}...h_{2m+1}: \ m\in\omega\} $ and $ \{h_{0}h_{2}...h_{2m}: \ m\in\omega\} $ are Cauchy. As a result, we  put $ j:= \lim\limits_{m\rightarrow\infty} h_{1}h_{3}...h_{2m+1} $, $ l:= \lim\limits_{m\rightarrow\infty} h_{0}h_{2}...h_{2m}$, and $h:=lj^{-1}$. It is easy to see that $ f=g^{h} $.
\end{proof}
In the  next two theorems, we will prove that the family of Lascar $I$-generic automorphisms satisfies the first and the third conditions from the definition of a Lascar generic system (again, the argument and notation used in the proof of the following theorems are similar to those in \cite{las} and \cite{ksb}):
\begin{theorem}
($n\in\omega$) 	Suppose $ \M $ is a countable  recursively saturated model of $ \pa $ and $I$ is a strong cut of $\M$. Then the set of all $(n+1)$-tuples  of Lascar $I$-generic  automorphisms of  $ \M $  is comeagre in $ (\G_{(I)} )^{n+1}$.
\end{theorem}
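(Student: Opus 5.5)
We will show that the complement of the set of Lascar $I$-generic $(n+1)$-tuples is contained in a countable union of closed nowhere dense subsets of $(\G_{(I)})^{n+1}$, following Lascar's argument for Theorem 2.7 (as presented in \cite{ksb}). Since $\M$ is countable, the family $\mathcal{S}_{I}$ is countable, and so is each $\Aa$. Hence there are only countably many triples $(\K,\mathcal{L},\bar{f})$ with $\K,\mathcal{L}\in\mathcal{S}_{I}$ and $\bar{f}\in(\A_{(I)}^{\M}(\mathcal{L}))^{n+1}$, and only countably many finite tuples $\bar{a}\in M$. We therefore handle conditions (i) and (ii) of Definition 3 separately. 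For each fixed instance we find a comeagre set (in fact open dense, or a countable intersection of such) consisting of the tuples that satisfy that instance. The countable intersection is then the desired comeagre set.

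For condition (i), fix $\bar{a}$. Let $U_{\bar{a}}$ be the set of $\bar{g}\in(\G_{(I)})^{n+1}$ for which there is some $\K\in\mathcal{S}_{I}$ with $\bar{a}\in K$ and $\bar{g}\upharpoonright_{K}\in\mathrm{EC}_{I}(\K)$. The condition $\bar{g}\upharpoonright_K=\bar{h}$ means $\bar{g}$ lies in a basic open set determined by the countably many values on $K$. Since $K$ is infinite, this set is not open, so we do not claim openness. Instead we note that $U_{\bar{a}}$ is analytic, hence has the Baire property, and that it is dense in every basic open set. Alternatively, we replace $U_{\bar{a}}$ by the open condition needed later; see the obstacle below.

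Density comes from Theorem 3.5. A basic open set is given by a finite partial map $\bar{c}\mapsto\bar{c}'_{k}$ realised by some $\bar{f}\in(\G_{(I)})^{n+1}$. Taking $a$ to code $\bar{a}\bar{c}$ and $b_{k}:=f_{k}(a)$, we get $\tp(a,i)=\tp(b_{k},i)$ for all $i\in I$, because $f_{k}$ fixes $I$ pointwise. Theorem 3.5 then gives $\K$ and an $I$-e.c. tuple $\bar{h}$ with $h_{k}(a)=b_{k}$. Each $h_k$ extends to an element of $\G_{(I)}$, and the resulting tuple lies in the prescribed open set and in $U_{\bar{a}}$. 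Comeagreness of $U_{\bar{a}}$ would then follow from the Baire property together with a Banach–Mazur style argument.

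For condition (ii), fix $(\K,\mathcal{L},\bar{f})$ with $\K\prec\mathcal{L}$, $\bar{f}\upharpoonright_{K}$ an $I$-e.c. tuple, and $\bar{f}\upharpoonright_K\in(\Aa)^{n+1}$. Consider the set $B$ of those $\bar{g}$ with $\bar{g}\upharpoonright_{K}=\bar{f}\upharpoonright_{K}$ for which no $h\in\G_{(K)}$ satisfies $\bar{f}\subseteq\bar{g}^{h}$. We want $B$ to be meagre. The key point is the following. Suppose $\bar{g}\upharpoonright_K=\bar{f}\upharpoonright_K$ is $I$-e.c. Then for every finite $\bar{d}\in L$, the existential type of $\bar{d}$ over $K$ under $\bar{f}$ is realised in $\M$ by some $\bar{d}'$ under $\bar{g}$, after applying an element of $\G_{(K)}$. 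This uses the $I$-e.c. property, with the formula $\varphi$ expressing the relevant finite configuration, and a back-and-forth inside $\G_{(K)}$ using recursive saturation and the type equality over $K$.

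From this we show that the set of $\bar{g}$ extending $\bar{f}\upharpoonright_K$ and admitting a finite approximation to a conjugating $h$ on each finite part of $L$ is open dense in the closed set $\{\bar{g}:\bar{g}\upharpoonright_K=\bar{f}\upharpoonright_K\}$. A Cauchy-sequence argument, as in the proof of Theorem 3.7, then assembles the approximations into a single $h\in\G_{(K)}$. Outside that closed set, condition (ii) is vacuous for this instance.

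We expect the main obstacle to be step (ii): proving that every $\bar{g}$ in a comeagre subset of the relevant closed set is conjugate over $K$ to something extending $\bar{f}$ on all of $L$. The difficulty is that $L$ is infinite, so the extension cannot be handled in a single open condition. We would first attempt to code $L$ and $\bar{f}$ by an element (using $I$-smallness and Lemma 2.2(3)) and then use the strength of $I$, via Theorem 2.3, to obtain the needed realisations uniformly.
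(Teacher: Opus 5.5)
\textbf{There is a genuine gap.} Your overall decomposition matches the paper's: $\LG_I$ is written as a countable intersection indexed by tuples $\bar a$ and by triples $(\K,\LL,f)$. Most of the trouble comes from one missed observation. An $I$-small $\K$ is coded, $K=\{(\alpha)_i:i\in I\}$, and every $g\in\G_{(I)}$ fixes $I$ pointwise, so $g((\alpha)_i)=(g(\alpha))_i$ for $i\in I$. Hence $(\G_{(I)})_{\alpha}\subseteq\G_{(K)}$, i.e.\ $\G_{(K)}$ is an \emph{open} subgroup of $\G_{(I)}$. This is exactly why $\mathcal{S}_I$ meets condition (4) of a Lascar generic system.

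So your claim that $\{\bar g:\bar g\upharpoonright_K=\bar h\}$ is not open is false. Although $K$ is infinite, this set is a product of cosets of $\G_{(K)}$, hence clopen. Being $I$-e.c.\ depends only on $\bar g\upharpoonright_K$, so your $U_{\bar a}$ (the paper's $D(\bar a)$) is open. Your density argument via the existence of $I$-e.c.\ tuples then settles (i) as in the paper. Your fallback is not a valid inference: a dense set with the Baire property can be meagre (e.g.\ $\mathbb{Q}\subseteq\mathbb{R}$). Exhibiting single points in each basic open set gives no non-meagreness there, so ``analytic + dense + Banach--Mazur'' does not yield comeagreness.

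\textbf{Condition (ii).} The same observation lets the infinite set $L$ be handled by a single open condition, which you say is impossible. For $n=0$, write $L=\{(\beta)_i:i\in I\}$, fix $\hat f\in\G_{(I)}$ extending $f$, and put $\beta':=\hat f(\beta)$. Then $O_2=\{g:\exists h\in\G_{(K)}\ f=g^{h}\upharpoonright_L\}$ is open: a witness $h$ for $g$ also works for any $g'\in\G_{(I)}$ with $g'(h(\beta))=g(h(\beta))$. Also $O_1=\{g:g\upharpoonright_K\neq f\upharpoonright_K\}$ is open, as the complement of the clopen coset $\hat f\G_{(K)}$. So only density of $O_1\cup O_2$ is needed, and $\LG_I$ is then a countable intersection of open dense sets.

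Your route does not supply this density, for two reasons.
\begin{itemize}
\item Your ``key point'' is unjustified and runs the wrong way. The $I$-e.c.\ property of $f\upharpoonright_K$ only pulls witnesses of single formulas \emph{down into} $K$. It says nothing about how an arbitrary $g$ with the same restriction behaves outside $K$, so it cannot make the full type of $(\bar d,f(\bar d))$ over the infinite set $K$ realized as $(\bar d',g(\bar d'))$.
\item Even granting open dense sets of finite approximate conjugators, the conjugators for different finite $\bar d\subseteq L$ need not cohere. No Cauchy argument assembles them into one $h$.
\end{itemize}

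\textbf{The missing step} is the paper's density argument, which builds a \emph{new} $g$ together with $h$.
\begin{itemize}
\item Take a basic open set $\{g:g(a)=b\}$ meeting $\hat f\G_{(K)}$. Use the strength of $I$, applied to a function defined from a satisfaction class, to find $e>I$ such that the recursive type $p_e(x,y)$ is finitely satisfiable. Here $p_e(x,y)$ consists of $\forall i<e\,(\phi(x,a,i)\leftrightarrow\phi(y,b,i))$ and $\forall i<e\,(\phi(\beta,\beta',(\alpha)_i)\leftrightarrow\phi(x,y,(\alpha)_i))$ for all $\phi$.
\item Suppose finite satisfiability failed at a standard level. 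The relevant finite fragment of $\tp(\beta,\beta'/K)$ is coded by an element of $I$, hence by Lemma 2.2(3) by some $(\alpha)_{i_0}\in K$.
\item Since $\beta$ witnesses ``$\exists x$ with $(x,\hat f(x))$ satisfying this fragment'', the $I$-e.c.\ property gives $d\in K$ with $(d,f(d))$ satisfying it.
\item Because $a\mapsto b$ is compatible with $f\upharpoonright_K$, the pair $(d,f(d))$ satisfies both parts at that level, contradicting the failure.
\item A realization $(\gamma,\gamma')$ of $p_e$ then lets back-and-forth produce $g$ with $g(a)=b$, $g(\gamma)=\gamma'$, and $h\in\G_{(K)}$ with $h(\beta,\beta')=(\gamma,\gamma')$. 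Hence $g^h(\beta)=\beta'$ and $g\in O_2$.
\end{itemize}
Your closing sentence gestures at coding and strength. But this realization argument, which does not go through Theorem 2.3, is the heart of the proof and is left undone.
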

\begin{proof}
	We will prove the theorem for the case $n=0$, and the remaining cases follow by a similar argument. For every $\bar{a}\in M$ define:
$$ D(\bar{a}):=\left\lbrace g\in\G_{(I)}: \begin{array}{c}
 \text{There exists some } I\text{-small }  \K\prec\M \text{ containing  } \bar{a} \text{ s.t.} \\ g\upharpoonright_{K}\in\Aa \text{ is } I\text{-e.c.}\end{array}\right\rbrace. $$
It is easy to see that $D(\bar{a})$ is open in $\G_{(I)}$. In addition,  by Theorem 3.4 it is  dense in $\G_{(I)}$.

Moreover,  for every two  $I$-small elementary submodel $ \K$ and $ \mathcal{L} $ of $\M$ and for each $ {f\in\A_{(I)}^{\M}(\mathcal{L})}$, define the  property $\blacklozenge(\K,\LL,f)$ as follows:
	$$ \left(\begin{array}{c}
	\K\prec\mathcal{L}, \text{ and }\\
f\upharpoonright_{K}\in\Aa \text{ is } I\text{-e.c.}
\end{array}\right).$$
  Now, if $\blacklozenge(\K,\LL,f)$ holds in $\M$,  let $ O(\K,\mathcal{L},f):=O_{1}(\K,f)\cup O_{2}(\K,\mathcal{L},f)$, where:
\begin{center}
	$O_{1}(\K,f):=\{g\in\G_{(I)}: \ g\upharpoonright_{K}\neq f\upharpoonright_{K}\},$ and \\
	$O_{2}(\K,\mathcal{L},f):=\{g\in\G_{(I)}: \exists h\in\G_{(K)} \ f= g^{h}\upharpoonright_{L} \}.$
\end{center} 
First, note  that both $O_{1}(\K,f)$ and $O_{2}(\K,\LL,f)$ are open in $\G_{(I)}$. Moreover, $O(\K,\mathcal{L},f)$ is dense in $\G_{(I)}$: to see this, let $\K=\{(\alpha)_{i}: \ i\in I\}$ and $\mathcal{L}=\{(\beta)_{i}: \ i\in I\}$ for some $\alpha,\beta\in M$, and  $\beta'=\hat{f}(\beta)$ for some extension $\hat{f}\in\G_{(I)}$ of $f$. Suppose $a,b\in M$ such that:
\begin{center}
$(1): \  \ \ $$\tp(a,\textbf{x},i)=\tp(b,f(\textbf{x}),i)$ for all $\textbf{x}\in K$ and every $i\in I$.
\end{center}    We shall find some $g\in\G_{(I)}$ and some $h\in\G_{(K)}$ such that $g^{h}(\beta)=\beta'$ and $g(a)=b$. For this purpose, for every $s\in M $ define:
\begin{align*}
		p_{s}(x,y):=&\{\forall i<s \ (\phi(x,a,i)\leftrightarrow\phi(y,b,i)): \ \phi  \text{ is an } \mathbb{L}_{A}\text{-formula}\}\cup\\ &
	\{  \forall i<s \ (\phi(\beta,\beta',(\alpha)_{i})\leftrightarrow\phi(x,y,(\alpha)_{i})): \ \phi  \text{ is an } \mathbb{L}_{A}\text{-formula}\}.
\end{align*}
We need to find some $s>I$ such that the recursive type $p_{s}(x,y )$ is finitely satisfiable. Let $S$ be an inductive satisfaction class for $\M$, and then  define: 
$$\Upsilon(m):= \max\left\lbrace s: \exists x,y \left(\begin{array}{c}
	\forall i<s \ \forall z<m \  (S(\bm{\phi}_{z}(x,a,i))\leftrightarrow S(\bm{\phi}_{z}(y,b,i))) \ \wedge\\
	\forall i<s  \ \forall z<m \ (S(\bm{\phi}_{z}(\beta,\beta',(\alpha)_{i}))\leftrightarrow S(\bm{\phi}_{z}(x,y,(\alpha)_{i})))
\end{array}\right)  \right\rbrace.$$
Since $I$ is a strong cut in $\M$, there exists some  $e\in M$ such that $\Upsilon(i)<I$ iff $\Upsilon(i)<e$ for all $i\in I$. We show that $p_{e}(x,y)$ is finitely satisfiable. It suffices to prove that $\Upsilon(m)>e$ for every $m\in\omega$. Otherwise, there exists some $m_{0}\in\omega$ such that $\Upsilon(m_{0})\leq e$. So, $\Upsilon(m_{0})\in I$. Put $s_{0}:=\Upsilon(m_{0})+1$. As a result, it holds that:\\

$(2): \ \ $$\M\models\forall x,y \left(\begin{array}{c}
	\forall i<s_{0} \ \bigwedge_{z=0}^{m_{0}} \  (S(\bm{\phi}_{z}(x,a,i))\leftrightarrow S(\bm{\phi}_{z}(y,b,i))) \rightarrow\\
	\exists i<s_{0}  \ \bigvee_{z=0}^{m_{0}} \ (S(\bm{\phi}_{z}(\beta,\beta',(\alpha)_{i})) \ \wedge \neg S(\bm{\phi}_{z}(x,y,(\alpha)_{i})))
\end{array}\right). $\\

Let:
$$ A:=\{\langle r,i\rangle<2^{\max\{s_{0},m_{0}\}}: (\M;S)\models r\leq m_{0} \ \wedge \ i<s_{0} \ \wedge \ S(\bm{\phi}_{r}(\beta,\beta',(\alpha)_{i}))\}.$$
So, there exists some $\epsilon\in I$ coding $A$ in $\M$. Then, put:
$$B:=\{\langle r,(\alpha)_{i}\rangle\in K: \ \M\models \langle r,i\rangle\E \epsilon \}. $$

Therefore,  by Lemma 2.2(3) there exists some $i_{0}\in I$ such that $(\alpha)_{i_{0}}$ codes $B$ in $\K$. Then, it holds that: \\

$(3): \ \ \ $$\M\models \exists x \  \forall z \ \bigwedge_{r=0}^{m_{0}} \ (\bm{\phi}_{r}(x,\hat{f}(x),z)\leftrightarrow \langle r,z\rangle\E(\alpha)_{i_{0}}) $.\\

So, since $f\upharpoonright_{K} $ is $I$-e.c., statement (3) implies that there exists some $d\in K$ such that:\\

$(4): \ \ \ $$\M\models    \forall z \ \bigwedge_{r=0}^{m_{0}} \ (\bm{\phi}_{r}(d,f(d),z)\leftrightarrow \langle r,z\rangle\E(\alpha)_{i_{0}}) $.\\ 

But statement (4) is in contradiction by statements (1) and (2). Therefore, $p_{e}(x,y)$ is finitely satisfiable. Consequently, by applying the back-and-forth method, we can construct the desired automorphisms  $g$ and $h$.

Moreover, we have: $$\LG_{I}=\bigcap_{\bar{a}\in M}D(\bar{a}) \ \cap\bigcap_{\substack{\K,\LL\in \mathcal{S}_{I},\\ f\in\A_{(I)}^{\M}(\LL),\\\M\models\blacklozenge(\K,\LL,f)}}O(\K,\LL,f).$$ 
As a result, $\LG_{I}$ is comeagre in $\G_{(I)}$.
\end{proof}

\begin{theorem}
	($n\in\omega$) 	Suppose $ \M $ is a countable  recursively saturated model of $ \pa $, $I$ is a strong cut of $\M$, and $(f_{0},...,f_{n})$ is an  Lascar $I$-generic tuple of automorphisms of $\M$. Then the following set is a comeagre subset of $\G_{(I)}$:
	$$X:=\{g\in \G_{(I)}: \ (f_{0},...,f_{n},g) \text{ is } I\text{-Lascar generic}\}. $$
\end{theorem}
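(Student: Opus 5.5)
We will write $X$ as a countable intersection of open dense subsets of $\G_{(I)}$, mirroring the proof of Theorem 3.7. For $\bar a\in M$ let $D_f(\bar a)$ be the set of $g\in\G_{(I)}$ for which there is an $I$-small $\K\prec\M$ containing $\bar a$ with $(f_0\upharpoonright_K,\dots,f_n\upharpoonright_K,g\upharpoonright_K)$ $I$-e.c. For $\K,\LL\in\mathcal{S}_I$ and $(e_0,\dots,e_{n+1})\in(\A_{(I)}^{\M}(\LL))^{n+2}$ satisfying $(\spadesuit)$ relative to $\K$, let $O_f(\K,\LL,\bar e)$ be the union of three sets. The first is $\{g: g\upharpoonright_K\ne e_{n+1}\upharpoonright_K\}$. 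The second is $\G_{(I)}$ itself if some $f_k\upharpoonright_K\neq e_k\upharpoonright_K$. The third is $\{g:\exists h\in\G_{(K)}\ e_k=f_k^h\upharpoonright_L\ (k\le n),\ e_{n+1}=g^h\upharpoonright_L\}$. Then $X=\bigcap_{\bar a}D_f(\bar a)\cap\bigcap O_f(\K,\LL,\bar e)$, and the intersection is countable because $\mathcal{S}_I$ and each $\Aa$ are countable. Openness is immediate: membership in each set is witnessed by a finite piece of $g$, since $\K$ and $\LL$ are fixed and the conditions on $h$ can be checked on finitely many points of $\beta$ (the code of $\LL$).

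For density of $D_f(\bar a)$, take a basic open set $g_0\G_{(I)}\cap\G_{(\bar c)}$. Since $f$ is Lascar $I$-generic, by (i) there is an $I$-small $\K_0\ni\bar a,\bar c,g_0(\bar c)$ with $\bar f\upharpoonright_{K_0}$ $I$-e.c. We then need a relative version of Theorem 3.4: given that $\bar f\upharpoonright_{K_0}$ is already $I$-e.c., together with $a,b$ satisfying $\tp(a,\bar x,i)=\tp(b,\bar x,i)$ over the relevant parameters, we must find $\K\supseteq K_0$ and an $I$-e.c. tuple $(\bar f\upharpoonright_K, g')$ with $g'(a)=b$. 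This is done by repeating the $\Omega,\bar\Omega$ construction of Theorem 3.4, now working inside a satisfaction-class model $(\M;S)$ and treating $f_0,\dots,f_n$ as fixed. Because $f$ is Lascar $I$-generic, we can choose $\K$ so that $\bar f\upharpoonright_K$ is $I$-e.c., and only the new coordinate $g$ has to be built by the strong-cut / overspill / recursive-saturation argument. The resulting $g'$ then extends, via condition (ii) for $\bar f$, to some $g\in g_0\G_{(\bar c)}$.

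For density of $O_f$, mimic the density argument for $O$ in Theorem 3.7. Given $a,b$ with matching types over $K$, we must find $g$ with $g(a)=b$ and $h\in\G_{(K)}$ such that $h$ conjugates $\bar f$ into agreement with $\bar e$ on $L$ and conjugates $g$ into agreement with $e_{n+1}$ on $L$. First use Lascar $I$-genericity (ii) of $\bar f$ to get $h_1\in\G_{(K)}$ with $e_k=f_k^{h_1}\upharpoonright_L$. It then remains to find $g$ with $g(a)=b$ and $h_2\in\G_{(K)}\cap C(\bar f^{h_1}\upharpoonright\ldots)$. Here we realize a type $p_s(x,y)$ as in Theorem 3.7, now involving the $f_k$ as well. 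The function $\Upsilon$ and the coding set $A$ are extended to formulas containing the $f_k$-images, and the contradiction comes from the $I$-e.c. property of the full tuple $(\bar e\upharpoonright_K)$, used exactly as in statements (3) and (4).

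The main obstacle is this last step: the back-and-forth must construct $h$ that simultaneously conjugates the fixed $\bar f$ correctly while $g$ is being built. I expect to handle it by back-and-forth through an increasing chain of $I$-small models on which $\bar f^{h}$ stays $I$-e.c. (available by (i)), invoking (ii) for $\bar f$ at each stage as in the proof of Theorem 3.6.
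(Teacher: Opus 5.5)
Your $G_\delta$ decomposition of $X$ is fine. The gap is that the proposal never establishes density of either family of open sets, and the two ``relative'' results you appeal to are exactly where the difficulty lies.

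For $D_f(\bar a)$: the $\Omega,\bar\Omega$ construction of Theorem 3.4 moves witnesses into $K$ using $(\K;S')\prec(\M;S)$. To make $(f_0\upharpoonright_K,\dots,f_n\upharpoonright_K,g')$ $I$-e.c. you would need witnesses for formulas involving $\hat f_k(x)$, for arbitrary extensions $\hat f_k$ of $f_k\upharpoonright_K$. These are not definable in $(\M;S)$. Moreover, condition (i) for $\bar f$ does not give you a $K$ on which $\bar f$ is $I$-e.c. that is also an $I$-small elementary submodel of $(\M;S)$; there is no reason the $f_k$ should even map such a $K$ onto itself. Separately, condition (ii) for $\bar f$ plays no role in extending $g'$, since every element of $\Aa$ extends by definition.

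For $O_f(\K,\LL,\bar e)$: after obtaining $h_1$ you still need $h\in\G_{(K)}$ such that $x:=h(\beta)$ has $(x,f_0(x),\dots,f_n(x))$ realizing $\tp(\beta,e_0(\beta),\dots,e_n(\beta))$ over $K$. At the same time some $g$ must send $a\mapsto b$ and $x\mapsto h(e_{n+1}(\beta))$. The constraint on $x$ is a condition on the values of the fixed, non-definable automorphisms $f_k$ at $x$, not a recursive type in $x$ over parameters from $M$. So the analogue of $\Upsilon$ from the density argument for $O(\K,\LL,f)$ cannot be defined in $(\M;S)$, and the strong-cut/overspill argument does not apply. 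Your fallback back-and-forth, using (ii) for $\bar f$, controls only $\bar f$ and gives no mechanism for the $g$-coordinate. You acknowledge this step is open, and it is essentially the whole content of the theorem.

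The missing idea is that you never need density of the individual open sets. Since $X$ is $G_\delta$, it suffices to show $X$ itself is dense, and this follows from results already proved. (Your ``Theorem 3.7'' and ``Theorem 3.6'' are the paper's Theorems 3.6 and 3.5.) The argument runs as follows.

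Given $a\in M$ and $f\in\G_{(I)}$, let $b:=f(a)$. Use (i) for $\bar f$ to get $\K=\{(\alpha)_i:i\in I\}$ containing $a,b$ with $\bar f\upharpoonright_K$ $I$-e.c.

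By comeagreness of Lascar $I$-generic $(n+2)$-tuples (Theorem 3.6), pick such a tuple $(g_0,\dots,g_{n+1})$ in $f_0(\G_{(I)})_\alpha\times\cdots\times f_n(\G_{(I)})_\alpha\times f(\G_{(I)})_a$. Then $g_k\upharpoonright_K=f_k\upharpoonright_K$ and $g_{n+1}(a)=b$.

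The conjugacy criterion (Theorem 3.5) gives $h\in\G_{(K)}$ with $f_k=g_k^h$ for $k\le n$. Put $g:=g_{n+1}^h$. Then $(f_0,\dots,f_n,g)$ is a conjugate of a Lascar $I$-generic tuple, hence lies in $X$. Also $g(a)=b$, because $h$ fixes $a$ and $b$.

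To apply Theorem 3.5 you want $(g_0,\dots,g_n)$ itself to be Lascar $I$-generic. This costs nothing: the set of $(n+2)$-tuples that are Lascar $I$-generic and whose first $n+1$ coordinates also form a Lascar $I$-generic tuple is still comeagre.
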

\begin{proof}
	First, note that similar to the proof of the previous theorem, $X$ is equal to the intersection of countably many open sets. So it suffices to show that $X$ is dense in $\G_{(I)}$. For this purpose,  suppose $a\in M$ and $f\in\G_{(I)}$ are given. We shall find some $g\in X\cap f\G_{(I\cup\{a\})}$. Let $b:=f(a)$ and  $\K=\{(\alpha)_{i}: \ i\in I\}$ be some $I$-small elementary submodel of $\M$ containing $a,b$ such that $(f_{0}\upharpoonright_{K},...,f_{n}\upharpoonright_{K})$ is $I$-e.c. Now, by considering the open set $f_{0}(\G_{(I)})_{\alpha}\times...\times f_{n}(\G_{(I)})_{\alpha}\times f(\G_{(I)})_{a}$,  since by the previous theorem $(\LG_{I})^{n+2}$ is dense in $(\G_{(I)})^{n+2}$, there exists some Lascar $I$-generic tuple $(g_{0},...,g_{n},g_{n+1})$ such that $g_{k}\upharpoonright_{K}=f_{k}\upharpoonright_{K}$ for all $k=0,...,n$ and $g_{n+1}(a)=b$. Therefore, by Theorem 3.5, there exists some $h\in\G_{(K)}$ such that $f_{k}=g_{k}^{h}$ for all $k=0,...,n$. Put $g:=g_{n+1}^{h}$.
\end{proof}
Up to this point, we have verified all the conditions required by the definition of a Lascar generic system, leading to the following conclusion:
\begin{cor}
	Suppose $ \M $ is a countable  recursively saturated model of $ \pa $ and $I$ is a strong cut of $\M$. Then $(\LG_{I},\mathcal{S}_{I},\{\mathrm{EC}_{I}(\K): \ \K\in \mathcal{S}_{I}\})$ is a Lascar generic system for $\G_{(I)}$.
\end{cor}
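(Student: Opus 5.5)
The plan is to check the nine clauses in the definition of a Lascar generic system one at a time. We work in the expansion $\M^{*}$, so $\A(\M^{*})=\G_{(I)}$, and take $\mathcal{X}_{n}$ to be the set of Lascar $I$-generic $n$-tuples, $\mathcal{A}=\mathcal{S}_{I}$, and $Y_{\K,n}$ the $n$-tuples in $\mathrm{EC}_{I}(\K)$.

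The analytic clauses come straight from earlier results. Clause (1) holds because $\mathcal{X}_{n}$ is comeagre in $(\G_{(I)})^{n}$ by Theorem 3.6. Clause (3) is exactly Theorem 3.7. Clause (9) is Theorem 3.5, applied with $\K\in\mathcal{S}_{I}$ and the $I$-e.c. restriction.

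Clause (7) follows from part (i) of Definition 3.5, provided we can find an $I$-small $\LL$ containing both $\bar a$ and a given $K\in\mathcal{S}_{I}$. Write $K=\{(\alpha)_{i}:i\in I\}$. Applying condition (i) to the tuple $(\bar a,\alpha)$ gives an $I$-small $\LL\ni\bar a,\alpha$ on which the restrictions are $I$-e.c. By Lemma 2.2(2) we have $I\subseteq L$, and since $\LL$ is an elementary submodel, every $(\alpha)_{i}$ lies in $L$. Hence $K\subseteq L$. Reading $Y_{K,n}$ in (7) as the e.c. tuples over the larger model $L$, which is how the SIP proof uses it, gives clause (7).

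Clause (4) holds because each $\K\in\mathcal{S}_{I}$ is countable, so $\G_{(K)}$ contains $(\G_{(I)})_{\alpha}$, where $K=\{(\alpha)_{i}:i\in I\}$. To see this, note that any automorphism fixing $\alpha$ and $I$ pointwise fixes every $(\alpha)_{i}$. Therefore $\G_{(K)}\cap\G_{(I)}$ is open in $\G_{(I)}$. Clause (6) is the definition of $\Aa$. Clause (5) holds because an automorphism $h\in\G_{(I)}$ sends $\{(\alpha)_{i}:i\in I\}$ to $\{(h(\alpha))_{i}:i\in I\}$, which is again $I$-small and elementary.

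The conjugation-invariance clauses (2) and (8) are where some care is needed. The key observation is that $h\in\G_{(I)}$ transports everything: $h^{-1}(K)$ is $I$-small, $f^{h}$ restricted to $h^{-1}(K)$ is conjugate to $f\upharpoonright_{K}$ via $h$, and the truth of $\M\models\varphi(d,g(d),\bar c)$ is preserved under $h$. The definition of $I$-e.c. is therefore invariant under this transport: extensions $f'$ of $f^{h}\upharpoonright_{h^{-1}(K)}$ correspond to extensions $hf'h^{-1}$ of $f\upharpoonright_{K}$, and witnesses $d$ correspond to $h(d)$.

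Clause (2), closure of $\LG_{I}$ under conjugacy, then follows by applying the same transport to both parts of Definition 3.5.

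Clause (8) is literally stated with $Y_{K,n}$ on both sides. I interpret it, as the SIP proof uses it, as saying the restriction lies in $\mathrm{EC}_{I}(h^{-1}(\K))$, and that follows by the transport argument.

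The main obstacle is expected to be clause (2)(ii), where $h$ is also conjugated inside $\G_{(K)}$. Given $\K\prec\LL$ and $f_{k}$ satisfying $(\spadesuit)$ for $g^{h}$, we apply $(\spadesuit)$ for $g$ to $h(\K)\prec h(\LL)$ and to $hf_{k}h^{-1}$. This produces some $h'\in\G_{(h(K))}$. Then $h^{-1}h'h\in\G_{(K)}$ is the required conjugator, and it remains to check this directly.
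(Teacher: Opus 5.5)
Your proposal is correct and follows the paper's route. The paper only remarks that Theorems 3.5, 3.6 and 3.7, together with the easy conjugacy-invariance of $\LG_{I}$, verify the clauses; your clause-by-clause check makes the routine parts explicit, including reading $Y_{K,n}$ in clauses (7) and (8) as $Y_{L,n}$ and $Y_{h^{-1}(K),n}$, and using Lemma 2.2(2) with $\LL\prec\M$ to get $K\subseteq L$. One small slip: in clause (4), $\G_{(K)}$ is open because $K=\{(\alpha)_{i}: i\in I\}$ is determined by $\alpha$ and $I$, as your next sentence says, not because $K$ is countable.
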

Therefore, combining Theorem 3.3 and Corollary 3.8, we conclude that Theorem A stated in the Introduction section is established. 
\begin{q}
Can the assumption that 
$I$ is strong in $\M$ be removed from Theorem A?
\end{q}

\section{Some remarks concerning the  normal subgroups of $\A(\M)$}
In this section, we present some observations concerning Lascar $I$-generic automorphisms and their implications for the normal subgroups of $\A(\M)$.

We begin with the following lemma concerning the fixed points of every Lascar $I$-generic automorphism:

\begin{lem}
	Suppose $\M$ is a countable recursively saturated model of $\pa$ and $I$ is a  strong cut of $\M$. Then for every Lascar $I$-generic automorphism $g$ of $\M$ it holds that $ 	\I(g)=I $ and $ \fix(g)\setminus I$ is downward cofinal in $\M\setminus I $.
\end{lem}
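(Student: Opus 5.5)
The plan is to prove the two assertions separately, using only the $I$-existential closedness of restrictions of $g$ to $I$-small elementary submodels, i.e.\ clause (i) of Definition 3, together with Theorem 2.3 (Schmerl's version of Kaye--Kossak--Kotlarski).

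\emph{Step 1: $I\subseteq\I(g)$ and $\fix(g)\setminus I$ is downward cofinal.} Since $g\in\G_{(I)}$, every element of $I$ is fixed, so $I\subseteq\I(g)$. For the cofinality claim, fix $c>I$. By Definition 3(i) choose an $I$-small $\K\prec\M$ containing $c$ such that $g\upharpoonright_{K}$ is $I$-e.c. By Theorem 2.3 (strength of $I$) there is $f\in\G_{(I)}$ with $\fix(f)=K$.

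Consider $f' := f^{-1}\circ \hat g$, where $\hat g=g$. Then $f'\upharpoonright_K$ is not what we want, so instead argue as follows. We want $f_0\in\G_{(I)}$ extending $g\upharpoonright_K$ that has a fixed point $x$ with $I<x<c$. To get it, apply the back-and-forth/type-realization argument of Theorem 3.4 to the elements $g\upharpoonright_K$ (viewed as a partial map) together with a new element $x$ sent to itself. This requires $\tp(x,\bar k)=\tp(x,g(\bar k))$ over finitely many parameters of $K$, where $I<x<c$.

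One route to producing such an $x$ is to pick $x>I$ with $x<c$ and $x\in\fix(\hat g)$ for some extension $\hat g\in\G_{(I)}$ of $g\upharpoonright_K$. For instance, $x$ could be taken in $\KK(\M;I\cup\{d\})$ for a suitable $d$, but producing such $x$ is not obvious. The cleaner route is to let $\M$ witness the formula $\exists x\,(\theta(x)\wedge x=f_0(x)\wedge x<c\wedge x>i_0)$ for each $i_0\in I$ and each formula $\theta$. Then $I$-e.c. yields $d\in K$ with $d=g(d)$, $i_0<d<c$, and $d$ is a fixed point of $g$ below $c$.

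To get $d>I$ and not just $d>i_0$, use the strength of $I$. Apply it to the coded function sending $j\in I$ to the least element of $K$ that is $>(\alpha)_j$-indexed, or better, use overspill within $K$. By Lemma 2.2(3), the set $\{d\in K: d<c,\ g(d)=d\}$ is approximated in $K$. I expect this to be the main obstacle: getting a fixed point strictly above $I$, not merely above each $i_0\in I$. My first attempt is to obtain an extension $f_0\in\G_{(I)}$ of $g\upharpoonright_K$ whose fixed-point set is downward cofinal above $I$. This can be done by extending $g\upharpoonright_K$ to $\hat g$ and conjugating $f$ (with $\fix(f)=K'$ for an $I$-small $K'\supseteq K$) appropriately, so that $\exists x(I<x<c\wedge f_0(x)=x)$ holds. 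The condition $I<x$ is not first-order, however. So take $x$ with $e<x<c$ for a parameter $e>I$ in $K$ below $c$, which exists since $K\cap(M\setminus I)$ is downward cofinal, because $K\supseteq \KK(\M;I\cup\{c\})$. Then $I$-e.c. applied to the formula $e<x<c\wedge y=x$ gives $d\in K$ with $g(d)=d$ and $I<e<d<c$.

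\emph{Step 2: $\I(g)\subseteq I$.} Suppose instead that $g$ fixes pointwise an initial segment strictly above $I$, say every element below some $c>I$. Take an $I$-small $\K\ni c$ with $g\upharpoonright_K$ $I$-e.c. Choose $f\in\G_{(I)}$ extending $g\upharpoonright_K$ and moving some element below $c$. For the existence of such $f$, use Theorem 2.3 to get $h\in\G_{(I)}$ moving some $u<c$ with $u>I$, and then compose it with an extension of $g\upharpoonright_K$ through a suitable conjugation so that it agrees on $K$. More carefully, take $\K'\supseteq\K$ $I$-small with $\fix(h)=K'$, so that $h$ fixes $K$ and moves some $u\notin K'$, $u<c$; such a $u$ exists since $K'$ is $I$-small and $[0,c)$ is not. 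Then $f:=\hat g h$ extends $g\upharpoonright_K$, and either $f(u)\neq u$ or $\hat g(u)\neq u$; in either case some extension moves a point below $c$. Apply $I$-e.c. to the formula $x<c\wedge y\neq x$ to get $d\in K$, $d<c$, with $g(d)\neq d$. This is a contradiction, so $\I(g)=I$.
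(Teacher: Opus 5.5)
\textbf{Step 2 is fine; Step 1 has a genuine gap.}

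Your Step 2 is correct. It is a slightly tidier version of the paper's first half. The paper picks $b_{1}<a$ outside $\KK(\M;I\cup\{g(\alpha)\})$ and asserts that some extension of $g\upharpoonright_{K}$ moves it. You instead compose $g$ (you may simply take $\hat g=g$) with an $h$ from Theorem 2.3 whose fixed-point set is an $I$-small model containing $K$.

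The gap in Step 1 is that you never produce an $f_{0}\in\G_{(I)}$ extending $g\upharpoonright_{K}$ with a fixed point in $(I,c)$. ``Conjugating $f$ appropriately'' is not an argument, and nothing cheap of this kind can work. For $e\in K$, an extension of $g\upharpoonright_{K}$ fixing a point of $(e,c)$ exists iff $g$ fixes a point of $K\cap(e,c)$: one direction is $I$-e.c., the other is taking $f_{0}=g$. That is essentially the conclusion you are after, so assuming it is circular.

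The missing ingredient is the paper's use of strength. Write $K=\{(\alpha)_{i}:i\in I\}$, let $S$ be an inductive satisfaction class, and put $\Phi(x):=\max\{y:\forall z,z'\le y\ \forall r\le x\ (S(\bm{\phi}_{r}(\alpha,z,z'))\leftrightarrow S(\bm{\phi}_{r}(g(\alpha),z,z')))\}$. Since $g$ fixes $I$ pointwise, $\Phi(n)>I$ for $n\in\omega$. So a strength witness $e$ for $\Phi$ satisfies $\Phi(n)\geq e$ for all $n\in\omega$. Hence any $b$ with $I<b<\min(e,c)$ has $\tp(\alpha,i,b)=\tp(g(\alpha),i,b)$ for all $i\in I$. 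This gives $f_{2}\in\G_{(I)}$ extending $g\upharpoonright_{K}$ with $f_{2}(b)=b$.

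\textbf{Your device for getting $d>I$ also fails.} The set $K\cap(M\setminus I)$ is \emph{not} downward cofinal in $M\setminus I$. Since $K$ is $I$-small and $I$ is strong, some interval $(I,e_{0})$ misses $K$; this is exactly the fact used in Corollary 4.2.

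Moreover, the $b$ just produced lies in that gap. Run your Step 2 with a parameter $e'\in K\setminus I$ in place of $c$: it shows $g$ moves some $(\alpha)_{i_{0}}<e'$. The formula $(w)_{z'}=z$ evaluated at $(z,z')=((\alpha)_{i_{0}},i_{0})$ then forces $e\le\Phi(n)<e'$ for all large $n\in\omega$, so $b<e$ lies below all of $K\setminus I$. Thus you cannot place the fixed point above a parameter from $K\setminus I$. The formula $x<c\wedge y=x$ on its own may return some $d\in I$. The paper's closing appeal to $I$-e.c.\ also leaves this point implicit.

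Your own Step 2 trick repairs it:
\begin{itemize}
\item Let $K''=\KK(\M;I\cup\{\alpha,b\})$. By Lemma 2.2(1) it is an $I$-small elementary submodel, and it contains $K$ and $b$.
\item Take $h$ with $\fix(h)=K''$ (Theorem 2.3) and choose $u\in(I,b)\setminus K''$.
\item Then one of $f_{2}$ and $f_{2}h$ extends $g\upharpoonright_{K}$, fixes $b$ and moves $u$.
\item Apply $I$-e.c.\ to the formula $x=\langle x_{1},x_{2}\rangle\wedge y=\langle y_{1},y_{2}\rangle\wedge x_{1}<x_{2}<c\wedge y_{1}\neq x_{1}\wedge y_{2}=x_{2}$. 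This yields $d_{1}<d_{2}<c$ in $K$ with $g(d_{1})\neq d_{1}$ and $g(d_{2})=d_{2}$.
\item Since $g\in\G_{(I)}$, $g(d_{1})\neq d_{1}$ gives $d_{1}\notin I$.
\end{itemize}
Hence $d_{2}\in\fix(g)$ and $I<d_{2}<c$.
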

\begin{proof}
	Suppose  $a\in M\setminus I$ is given. Since $ g $ is Lascar $I$-generic, there exists some $I$-small elementary submodel $ \K$ of $\M $ containing $ a $ such that $ g\upharpoonright_{K}\in\Aa $ is $I$-e.c.  Let $K=\{(\alpha)_{i}: \  i\in I\}$ for some $\alpha\in M$.
	
Note that	since $I$ is strong in $\M$,  the largest  common cut shared with $\M$ and ${\KK(\M;I\cup\{g(\alpha)\})}$ is $I$. As a result, there exists some $b_{1}>I$  such that $b_{1}<a$ and ${b_{1}\notin\KK(\M;I\cup\{g(\alpha)\})}$. So we can find some  $ f_{1}\in \G_{(I)} $ extending $ g\upharpoonright_{K} $ such that $ f(b_{1})\neq b_{1} $. As  a result,  since $g\upharpoonright_{K}$ is $I$-e.c. there exists some $c_{1}<a$ such that $g(c_{1})\neq c_{1}$. Therefore, $ 	\I(g)=I $.
	
	For the second statement of the theorem, let $S$ be a an inductive satisfaction class for $\M$ and define:
	$$\Phi(x):=\max\{y: \ \forall z,z'\leq y  \ \forall r\leq x \ S(\bm{\phi}_{r}(\alpha,z,z'))\leftrightarrow S(\bm{\phi}_{r}(g(\alpha),z,z'))\}. $$
	Let $e\in M$ be the witness of strength of $I$ for $\Phi$ in $\M$. Since for every $n\in\NN$ it holds that $\M\models \Phi(n)>e$, by using Overspill principle over $\NN$, there exists some $b_{2}>I$ such that $b_{2}<a$ and $\tp(\alpha,i,b_{2})=\tp(g(\alpha),i,b_{2})$ for all $i\in I$. As a result, there exists some $ f_{2}\in\G_{(I)} $ extending $g\upharpoonright_{K}  $ such that $ f_{2}(b_{2})=b_{2} $. Again, since $g$ is  $I$-e.c., we conclude that  $ \fix(g)\setminus I$ is downward cofinal in $\M\setminus I $.
\end{proof}
\begin{cor}
	
	Suppose $\M$ is a countable recursively saturated model of $\pa$, $\G:=\A(\M)$, and $I$ is a  strong cut of $\M$. Then there exists some automorphism $ f\in\G $ which is not  Lascar $I$-generic and $\I(f)=I$.
\end{cor}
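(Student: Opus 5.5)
The plan is to find an automorphism $f\in\G$ with $\I(f)=I$ that violates one of the two conclusions of Lemma 4.1. Since every Lascar $I$-generic automorphism $g$ has $\fix(g)\setminus I$ downward cofinal in $\M\setminus I$, it suffices to produce $f\in\G_{(I)}$ with $\I(f)=I$ and some $a>I$ such that $f$ has no fixed points in the interval $(I,a)$. Equivalently, every $c$ with $I<c<a$ is moved by $f$. By Lemma 4.1 such an $f$ cannot be Lascar $I$-generic.

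The construction uses the Kaye--Kossak--Kotlarski/Schmerl result (Theorem 2.3). Fix any $\gamma>I$ and let $\K$ be an $I$-small elementary submodel with $\K=\KK(\M;I)$. This model is $I$-small by Lemma 2.2(1) and elementary because $\M\models\pa$ has definable Skolem functions. Since $I$ is strong, Theorem 2.3 gives an automorphism $f$ with $\fix(f)=K$. Then $f\in\G_{(I)}$ by Lemma 2.2(2), and $\I(f)=I$, because the largest initial segment of $\M$ contained in $\KK(\M;I)$ is $I$ when $I$ is strong. This is the same fact used in the proof of Lemma 4.1. Hence $\fix(f)\setminus I=K\setminus I$.

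It remains to check that $K\setminus I$ is not downward cofinal in $\M\setminus I$. I expect this to be the main obstacle. The natural choice is to take $a$ to be the least element of $K\setminus I$ if one exists. But $K\setminus I$ need not have a least element, and it might in fact be downward cofinal when $I$ is not $\omega$-coded from above. So I would modify the construction. The first thing to try is to replace $\KK(\M;I)$ by an $I$-small elementary submodel chosen to avoid a neighbourhood of $I$. The first step is to pick $a>I$ such that $(I,a)\cap\KK(\M;I)=\emptyset$. This is possible when $\KK(\M;I)\setminus I$ is not downward cofinal in $\M\setminus I$. Then take $\K=\KK(\M;I)$ and $f$ as above.

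In the remaining case, $\KK(\M;I)\setminus I$ is downward cofinal. There I would instead run a back-and-forth using strength of $I$, in the style of the proofs of Theorems 3.4 and 3.6, to build $f\in\G_{(I)}$ with $\I(f)=I$ and $f(c)\neq c$ for all $c$ in some interval $(I,a)$. Concretely, I would realize recursive types saying $x\neq y$ for all elements below $a$ coded along the construction, with overspill over $I$ to find the bound $a$. If this direct route is too heavy, an alternative is to take $f$ Lascar $I$-generic and compose it with a suitable automorphism in $\G_{(>I)}$. The key point to verify for that alternative is that the composite still fixes exactly $I$ initially but loses downward cofinality of its fixed points.
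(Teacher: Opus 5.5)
Your first case is exactly the paper's proof: take $f$ with $\fix(f)=\KK(\M;I)$ from Theorem 2.3 and Lemma 2.2(1), then apply Lemma 4.1. The gap is that you never establish that $\KK(\M;I)\setminus I$ fails to be downward cofinal in $M\setminus I$. You even suggest it might be cofinal. It cannot be, and this one observation is what the paper's proof rests on.

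By Lemma 2.2(1), $\KK(\M;I)=\{(\delta)_i: i\in I\}$ for some $\delta\in M$. Apply the definition of strength to the coded function $i\mapsto(\delta)_i$. This gives $e>I$ such that $(\delta)_i\in I$ iff $(\delta)_i<e$ for every $i\in I$. Hence no element of $\KK(\M;I)$ lies strictly between $I$ and $e$, and this interval is nonempty since $e-1>I$. Two things follow at once:
\begin{itemize}
\item $\I(f)=I$;
\item every element of $\fix(f)\setminus I$ is $\geq e$, so $\fix(f)\setminus I$ is not downward cofinal.
\end{itemize}
Lemma 4.1 then shows $f$ is not Lascar $I$-generic. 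The same argument shows that for strong $I$ every $I$-small set is bounded away from $I$. Whether $I$ is $\omega$-coded from above plays no role.

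Your fallback for the second case, which is in fact vacuous, would not have worked as stated. The back-and-forth construction is only gestured at. The composition idea fails outright. Suppose $g$ is Lascar $I$-generic and $k\in\G_{(>I)}$ fixes a cut $J\supsetneq I$ pointwise. Lemma 4.1 gives a fixed point $c$ of $g$ with $I<c\in J$. For every $x<c$ we have $k(x)=x$ and $g(x)<g(c)=c$, so $g(x)\in J$. Thus both $gk$ and $kg$ agree with $g$ below $c$, and their fixed points stay downward cofinal above $I$.
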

\begin{proof}
	By Theorem 2.3 and Lemma 2.2(1)  there exists some $ f\in \G $ such that $ \fix(f)=\KK(\M;I) $. So $\I(f)=I$ (by strength of $I$). Since $I$ is strong in $\M$, there exists some $e>I$ such that there is no element of $\KK(\M;I) $ between $I$ and $e$. As a result, by Lemma 4.1,   $f$ is not Lascar $I$-generic.
\end{proof}
By Theorem 2.4(1), for every two cuts $I \subseteq J$ of $\M$ that are closed under exponentiation, the subgroup $\G_{(J)}$ is nowhere dense, and hence it is meagre in $\G_{(I)}$. Consequently, $\G_{(>I)}$, being a countable union of the subgroups $\G_{(J)}$ such that $I \subsetneqq J$, is also meagre in $\G_{(I)}$. However, by Lemma 4.1 we can conclude more than this:  
\begin{cor}
	Suppose $\M$ is a countable recursively saturated model of $\pa$, $\G:=\A(\M)$, and $I$ is a  strong cut of $\M$. Then $\G_{(>I)}\cap\LG_{I}=\emptyset$. 
\end{cor}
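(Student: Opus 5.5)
This follows immediately from Lemma 4.1. We argue by contradiction: suppose some $g\in\G_{(>I)}\cap\LG_{I}$ exists.

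Since $g\in\LG_{I}$, the automorphism $g$ is Lascar $I$-generic, so Lemma 4.1 gives $\I(g)=I$.

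On the other hand, $g\in\G_{(>I)}$ means by definition that $I\subsetneqq\I(g)$. This contradicts $\I(g)=I$, so the intersection $\G_{(>I)}\cap\LG_{I}$ is empty.

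The only point to check is the bookkeeping. Elements of $\LG_{I}$ are finite tuples, and the statement concerns the $1$-tuples, that is, single Lascar $I$-generic automorphisms $g$. These lie in $\G_{(I)}$, so $I\subseteq\I(g)$ holds automatically. Lemma 4.1 then upgrades this inclusion to equality, which is exactly what is incompatible with membership in $\G_{(>I)}$.

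No real obstacle arises; all the work was done in Lemma 4.1, where $\I(g)=I$ was obtained from the $I$-e.c. property together with the strength of $I$.

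In the paper's discussion, this corollary strengthens the meagreness of $\G_{(>I)}$ noted just before it. Since $\LG_{I}$ is comeagre in $\G_{(I)}$ (Theorem 3.6), $\G_{(>I)}$ lies inside the meagre complement of $\LG_{I}$.
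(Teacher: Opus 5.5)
Your argument is correct and is exactly the paper's proof, which just says the corollary is a direct implication of Lemma 4.1: a Lascar $I$-generic $g$ satisfies $\I(g)=I$, and this contradicts the defining condition $I\subsetneqq\I(g)$ of $\G_{(>I)}$. Your remark about reading $\LG_{I}$ as $1$-tuples is the only bookkeeping needed.
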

\begin{proof}
	A direct implication of Lemma 4.1.  
\end{proof}

As a result, the following holds for every strong cut $I$ of $\M$:
$$ \G_{(I)}=\bigcup_{g\in\LG_{I}}[g]^{\G_{(I)}} \ \bigcupdot \overset{\text{meagre in } \G_{(I)}}{\overbracket{  \bigcup_{\substack{g\notin\LG_{I}, \\ \I(g)=I}} [g]^{\G_{(I)}} \  \bigcupdot \ \bigcup_{g\in\G_{(>I)}}[g]^{\G_{(I)}}}}. $$  
In the remainder of this section, we show that Corollary 4.3 extends to all normal subgroups of $(\A(\M))_{(I)}$; i.e., we have the following theorem:
\begin{theorem}[or \textbf{Theorem B}]
	Suppose $\M$ is a countable recursively saturated model of $\pa$, $\G:=\A(\M)$, $I$ is a strong  cut of $\M$, and $\mathrm{N}$ is a normal subgroup of $\G_{I}$. 
	Then $\G_{(I)}=\mathrm{N}$ or $\mathrm{N}\cap\LG_{I}=\emptyset $. In particular, every nontrivial normal subgroup of $\G_{(I)}$ is meagre in it.
\end{theorem}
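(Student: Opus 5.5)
Suppose $\mathrm{N}\trianglelefteq\G_{(I)}$ contains some Lascar $I$-generic $g$; we show $\mathrm{N}=\G_{(I)}$. Since $\LG_{I}$ is closed under conjugation in $\G_{(I)}$ and $\mathrm{N}$ is normal, $\mathrm{N}$ contains the whole conjugacy class $[g]^{\G_{(I)}}$. The first step is to show this class is comeagre in $\G_{(I)}$; the second is to show every element of $\G_{(I)}$ is a product of two elements of $\mathrm{N}$.

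\textbf{Step 1.} We claim that for a Lascar $I$-generic $g$, the class $[g]^{\G_{(I)}}$ contains a nonempty open subset of $\LG_{I}$, namely $\LG_{I}\cap g\G_{(K)}$ for a suitable $K$. By Definition 3.6(i), with $\bar a$ empty, there is an $I$-small $\K\prec\M$ with $g\upharpoonright_K$ being $I$-e.c. If $f\in\LG_{I}$ satisfies $f\upharpoonright_K=g\upharpoonright_K$, then Theorem 3.5 gives $h\in\G_{(K)}$ with $f=g^{h}$. By Lemma 2.2(2), $I\subseteq K$, so $h\in\G_{(I)}$. Hence $\LG_{I}\cap g\G_{(K)}\subseteq[g]^{\G_{(I)}}\subseteq\mathrm{N}$.

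Here $\G_{(K)}$ is not open, since $K$ is infinite, so we must pass to finite data. Using Lemma 2.2(3), we would pick a finite tuple $\bar a$ (for instance the code $\alpha$ with $K=\{(\alpha)_i\}$, together with $g(\alpha)$) such that any $f\in\G_{(I)}$ with $f(\alpha)=g(\alpha)$ satisfies $f\upharpoonright_K=g\upharpoonright_K$. This holds because $f$ fixes $I$ pointwise and $f((\alpha)_i)=(f(\alpha))_i$. So $g(\G_{(I)})_{\alpha}$ is a nonempty open set $U$ with $\LG_{I}\cap U\subseteq\mathrm{N}$. By Theorem 3.6, $\mathrm{N}$ is therefore comeagre in $U$, and so it is nonmeagre with the Baire property.

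\textbf{Step 2.} A nonmeagre subgroup with the Baire property is open, by Pettis' theorem: $\mathrm{N}\mathrm{N}^{-1}$ contains a neighbourhood of the identity. Then $\mathrm{N}$ is an open normal subgroup of $\G_{(I)}$. An elementary alternative: for any $x\in\G_{(I)}$, both $U$ and $xU^{-1}\cdots$ are translates on which $\LG_{I}$ is comeagre, so we can write $x=uv$ with $u,v$ in comeagre pieces of $\mathrm{N}$.

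To conclude that an open normal subgroup is everything, we invoke the analogue of Kaye's theorem for $\G_{(I)}$, which the paper promises in this section: open implies closed, so $\mathrm{N}=\G_{(I)}\cap\G_{(J)}$ for some cut $J\supseteq I$ with $J=\I(\mathrm{N})$. But $g\in\mathrm{N}$ and $\I(g)=I$ by Lemma 4.1, so $J=I$ and $\mathrm{N}=\G_{(I)}$. If that generalization is not yet available at this point, I would argue directly: $\mathrm{N}\supseteq(\G_{(I)})_{\bar b}$ for some finite $\bar b$. By conjugating and using that $\I(\mathrm{N})=I$, together with Theorem 2.4(1)-style reasoning (the cut generated by $\bar b$ above $I$ is moved by elements of $\mathrm{N}$), every element of $\G_{(I)}$ becomes a product of elements of stabilizers of conjugates of $\bar b$. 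I expect this step to be the main obstacle: showing nontrivial open normal subgroups do not exist in $\G_{(I)}$, which requires the strength of $I$ and the Kaye-type correspondence.

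For the \emph{in particular} clause, suppose $\mathrm{N}\neq\G_{(I)}$. Then $\mathrm{N}\subseteq\G_{(I)}\setminus\LG_{I}$, which is meagre by Theorem 3.6.
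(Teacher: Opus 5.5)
Your proof is correct and is essentially the paper's argument: Theorem 3.5 plus normality puts $\LG_{I}\cap g\G_{(K)}$ inside $\mathrm{N}$, and your Pettis/Baire step showing $\mathrm{N}$ must then be open is the contrapositive of the paper's use of Lemma 2.8 in its ``$\mathrm{N}$ not open'' case; the open case is then settled by Theorem 4.5(4) as in the paper (you get $\I(\mathrm{N})=I$ from Lemma 4.1 rather than from openness, a cosmetic difference). Two small slips do not affect the argument: $\G_{(K)}$ is in fact open in $\G_{(I)}$ because it contains $(\G_{(I)})_{\alpha}$, as your own computation shows, and Pettis should be applied to $\LG_{I}\cap U$ (or $\mathrm{N}\cap U$) rather than to $\mathrm{N}$, whose Baire property is not known beforehand.
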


In order to prove Theorem 4.4, we first need to show that Theorem 2.4 (i.e., the Galois correspondence for closed normal subgroup of $\A(\M)$) can be extended to $(\A(\M))_{(I)}$ for every strong  cut $I$ of $\M$. For this purpose we use an argument nearly identical to that in \cite{sch1}:
\begin{theorem}
	Suppose $\M$ is a countable recursively saturated model of $\pa$, $\G:=\A(\M)$, and $I\subseteq J$ are two cuts of $\M$ closed under exponentiation.  Then: 
	\begin{itemize}
	\item[(1)]$J$ is invariant in $\G_{(I)}$ (i.e., $g(J)=J$ for every $g\in\G_{(I)}$) iff $J\cap\KK(\M;I)$ is cofinal in $J$ or $(M\setminus J)\cap\KK(\M;I)$ is downward cofinal in $M\setminus\KK(\M;I)$.
	\item[(2)] If $J$ is   invariant  in $\G_{(I)}$, then $\G_{(J)}$ is a closed normal subgroup of $\G_{(I)}$.
	\item[(3)] If $J=\inf \{\log^{n}(a): \ n\in \omega\} $ for some $a>I$, then  $\G_{(>J)}$ is equal to $\G_{(J')} $, where $ J':=\sup \{2_{n}^{a}: \ n\in \omega\}$. If $J$ is not of the above form, then the closure of $\G_{(>J)}$ in $\G_{(I)}$ equals to $\G_{(J)} $.
	\item[(4)]  For every closed normal subgroup $\mathrm{N}$ of $\G_{(I)}$, $\I(\mathrm{N})$ is closed under exponentiation and it is invariant in $\G_{(I)}$. Moreover, if $I$ is strong in $\M$, then  $\mathrm{N}=\G_{(\I(\mathrm{N}))}$.
	\end{itemize}
\end{theorem}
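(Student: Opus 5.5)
The plan is to transfer the proofs of Kaye's Theorem 2.4 and the Kaye--Kossak--Kotlarski characterization of invariant cuts to $\G_{(I)}$. The transfer works by viewing $\G_{(I)}$ as $\A(\M^{*})$, where $\M^{*}$ is the expansion of $\M$ by constants for the elements of $I$. In this setting ``definable'' becomes ``definable with parameters from $I$'', so $\KK(\M)$ is replaced by $\KK(\M;I)$ throughout. Recursive saturation is preserved only in a restricted form: types with finitely many parameters from $I$ are still realized. Wherever the original argument realizes a type over \emph{all} of $I$, I will instead use the strength of $I$, via an overspill argument with an inductive satisfaction class, exactly as in the proofs of Theorems 3.4 and 3.6.

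For (1), I follow \cite{kkk}. If $J\cap\KK(\M;I)$ is cofinal in $J$, every $g\in\G_{(I)}$ fixes cofinally many points of $J$ and hence maps $J$ into itself; the same argument applies to $g^{-1}$. Dually, if $(M\setminus J)\cap\KK(\M;I)$ is downward cofinal, then $g$ fixes a downward cofinal set of points above $J$, and again $g(J)=J$. For the converse, suppose both conditions fail. Then there are $a\in J$ and $b>J$ such that the interval $[a,b]$ contains no element of $\KK(\M;I)$ in the relevant sense. I then use a back-and-forth construction of the kind Kaye--Kossak--Kotlarski used, carried out over $I$, to produce $g\in\G_{(I)}$ moving some element of $J$ above $J$. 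The required types $\tp(x/I)$ are handled by the same finite-satisfiability and overspill technique as in Theorem 3.4.

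Part (2) is then immediate: $\G_{(J)}$ is closed as a pointwise stabilizer, and for $h\in\G_{(I)}$ we have $h\G_{(J)}h^{-1}=\G_{(h(J))}=\G_{(J)}$ because $J$ is invariant. For (3), I follow Kaye's proof of Theorem 2.4(1) essentially verbatim, working inside $\G_{(I)}$. In the first case, an automorphism fixing something above $J$ must fix $\sup\{2^{a}_{n}\}$, since that set is definable from $a$; the reverse inclusion uses $J'$ and the fact that $\log^{n}$ of the fixed points eventually falls below $a$. In the second case, density of $\G_{(>J)}$ in $\G_{(J)}$ requires the following: given $g\in\G_{(J)}$ and a finite tuple $\bar{c}$, find $f\in\G_{(I)}$ fixing an element above $J$ with $f(\bar{c})=g(\bar{c})$. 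This reduces to choosing $d>J$ with $\tp(\bar{c},d,i)=\tp(g(\bar{c}),d,i)$ for all $i\in I$. Since $g$ fixes $J$ pointwise and $J$ is closed under exponentiation and not of the excluded form, this follows from Kaye's lemma, with parameters from $I$ added (only finitely many at a time, which suffices because $I\subseteq J$ is fixed).

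For (4), the fact that $\I(\mathrm{N})$ is invariant in $\G_{(I)}$ follows from normality, since $\I(g^{h})=h^{-1}(\I(g))$. Closure under exponentiation comes from Kaye's argument: if $g$ fixes every element below $a$, it fixes $2^{a}$ coded-bitwise. The main obstacle is the equality $\mathrm{N}=\G_{(\I(\mathrm{N}))}$. One inclusion is trivial. For the other, I would follow \cite{sch1} and Kaye: given $f\in\G_{(J)}$ with $J=\I(\mathrm{N})$, approximate $f$ on finite tuples by products of conjugates of elements of $\mathrm{N}$, and then use closedness of $\mathrm{N}$. The key step is that for $g\in\mathrm{N}$ with $\I(g)$ close to $J$, the conjugates $g^{h}$, $h\in\G_{(I)}$, can realize prescribed finite partial maps fixing $J$. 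This requires constructing $h\in\G_{(I)}$ by back-and-forth, and it is exactly here that strength of $I$ is needed, so that types over $I$ behave like types over $\NN$ in an arithmetically saturated model, as in Theorem 3.4.
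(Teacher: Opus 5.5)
\textbf{The gap is in part (4), at the step that carries all the weight.}

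You reduce $\mathrm{N}=\G_{(\I(\mathrm{N}))}$ to this claim: for $g\in\mathrm{N}$ with $\I(g)$ near $J$, single conjugates $g^{h}$ ($h\in\G_{(I)}$) realize prescribed finite partial maps fixing $J$, with $h$ built by back-and-forth. But $g^{h}(a)=b$ requires some $v$ with $\tp(v,g(v),i)=\tp(a,b,i)$ for all $i\in I$. Nothing in your sketch guarantees that a fixed $g$ realizes the $2$-type of $(a,b)$ over $I$ on a pair $(v,g(v))$. Back-and-forth can only transport types that are already realized.

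The paper, following Schmerl's argument, handles this with Lemma 4.5.1. In your notation: suppose arbitrarily small $\mathbf{x}>\I(g)$ satisfy $g(\mathbf{x})<\mathbf{x}$, and the target $f\in\G_{(\I(g))}$ has $f(a)=b>a$. Then there are $\mathbf{u}<\mathbf{v}<\mathbf{w}$ with
\begin{itemize}
\item[(i)] $g(\mathbf{v})=\mathbf{u}$;
\item[(ii)] $\tp(\mathbf{u},\mathbf{v},i)=\tp(\mathbf{u},\mathbf{w},i)$ for all $i\in I$;
\item[(iii)] $\tp(\mathbf{v},\mathbf{w},i)=\tp(a,b,i)$ for all $i\in I$.
\end{itemize}
Transporting these pairs by $\sigma,\rho\in\G_{(I)}$ gives a product of \emph{two} conjugates, $g^{-f_{1}}g^{f_{2}}$ (or $g^{f_{1}}g^{-f_{2}}$ in the dual case), sending $a$ to $b$. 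This is Lemma 4.5.2, and it is then combined with closedness of $\mathrm{N}$.

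Producing $\mathbf{u},\mathbf{v},\mathbf{w}$ is the real work. It uses:
\begin{itemize}
\item[(1)] the terms $l_{r}(s,x)$ and the bound $\kappa_{n,s}>J$;
\item[(2)] Kaye's Lemma 2.7, to place some $\mathbf{r}>\I(g)$ with $g(\mathbf{r})<\mathbf{r}$ below the cut where $a$ and $b$ agree;
\item[(3)] repeated use of the strength of $I$: to realize $p_{e}$, to get witnesses for $\Phi$ and $\Upsilon$, and to realize $q_{s}$.
\end{itemize}
On top of this come several reductions:
\begin{itemize}
\item[(1)] the up/down case split on how $g$ moves small elements above $\I(g)$;
\item[(2)] the case $b<a$, handled via inverses;
\item[(3)] using part (3) with the largest $\G_{(I)}$-invariant exponentially closed cut inside $\I(g)$, to conjugate the target into $\G_{(\I(g))}$.
\end{itemize}
None of this is supplied by ``follow Schmerl and Kaye plus back-and-forth''.

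\textbf{Two smaller points.}

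First, only the last clause of (4) assumes $I$ strong, yet you route (1) through strength ``as in Theorem 3.4''. The paper's proof of (1) needs no strength. It uses $a\notin\KK(\M;I)$ and closure of $I$ under exponentiation to get, for each $s\in I$, some $x>b$ agreeing with $a$ on all $S(\bm{\phi}_{r}(\cdot,i))$ with $\langle r,i\rangle<s$. Overspill over $I$ in $(\M;S)$ then yields $f\in\G_{(I)}$ with $f(a)>J$.

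Second, (3) needs no new argument. Since $\G_{(>J)}\subseteq\G_{(J)}\subseteq\G_{(I)}$ and $\G_{(I)}$ is closed in $\G$, closure in $\G_{(I)}$ coincides with closure in $\G$, so (3) is literally Theorem 2.4(1). Your re-derivation also has errors:
\begin{itemize}
\item[(1)] Membership in $\G_{(>J)}$ requires fixing pointwise an initial segment properly extending $J$. A $d>J$ with $\tp(\bar{c},d,i)=\tp(g(\bar{c}),d,i)$ only yields an $f$ with $f(d)=d$.
\item[(2)] $\G_{(>J)}\subseteq\G_{(J')}$ holds because $\I(f)$ is always closed under exponentiation, not because $J'$ is definable from $a$.
\item[(3)] $\G_{(J')}\subseteq\G_{(>J)}$ is trivial, since $a\in J'\setminus J$.
\end{itemize}
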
 
\begin{proof}[Sketch of proof]
Parts (2) and (3) of the theorem follow directly from Theorem 2.4. For part (1), suppose $J$ is invariant in $\G_{(I)}$ and there are $a,b\in M$ such that $J\cap\KK(\M;I)<a\in J<b<(M\setminus J)\cap\KK(\M;I)$. Moreover, let $S$ be an inductive satisfaction class for $\M$. Then, since $a\notin\KK(\M;I)$ and $I$ is closed under exponentiation, for every $s\in I$ it holds that:\\

$(\M;S)\models\exists x \ (x>b \ \wedge \forall \langle r,i\rangle<s \ (S(\bm{\phi}_{r}(a,i))\leftrightarrow S(\bm{\phi}_{r}(x,i)))).$\\

As a result, by using Overspill principle over $I$ in $(\M;S)$, there exists some $f\in\G_{(I)}$ such that $f(a)>J$, which contradicts the assumption that $J$ is invariant in $\G_{(I)}$.

In order to prove part (4), we need the following lemmas (part (4) of the theorem is a direct result of Lemma 4.5.2 below):

\newtheoremstyle{nonum}{}{}{\itshape}{}{\bfseries}{.}{ }{#1 }
\theoremstyle{nonum}
\newtheorem{lemm}{Lemma 4.5.1}
\newtheoremstyle{nonum}{}{}{\itshape}{}{\bfseries}{.}{ }{#1 }
\theoremstyle{nonum}
\begin{lemm}
	Suppose $g\in\G_{(I)}$, $I'=\I(g)$, and there are arbitrary small element $\textbf{x}>I'$ such that $g(\textbf{x})<\textbf{x}$. Moreover, let $h\in\G_{(I')}$ such that $h(a)=b>a$ for some $a>I'$. Then there exist $\textbf{u},\textbf{v},\textbf{w}\in M$ such that:
	\begin{center}
		$ \textbf{u}<\textbf{v}<\textbf{w}$, $g(\textbf{v})=\textbf{u}$, and\\ $\tp(\textbf{u},\textbf{v},i)=\tp(\textbf{u},\textbf{w},i)$ and $\tp(\textbf{v},\textbf{w},i)=\tp(a,b
		,i)$ for all $i\in I$.
	\end{center}
\end{lemm}
\begin{proof}[Sketch of proof]
The proof of Lemma 4.5.1 is analogous to the argument in \cite{sch1} with some modifications. Below, we outline the proof sketch and highlight the parts that require adjustment:

First, we define:
$$ A:=\{\langle r,i\rangle\in I: \ (\M;S)\models S(\bm{\phi}_{r}(a,i))\},$$
$$ B:=\{\langle r,i\rangle\in I: \ (\M;S)\models S(\bm{\phi}_{r}(a,b,i))\}.$$
For every $s\in I$, there exist $\alpha_{s},\beta_{s}\in I$ which code $A\cap \{\textbf{x}\in M: \ \textbf{x}<s\}$ and ${B\cap \{\textbf{x}\in M: \ \textbf{x}<s\}}$, respectively (note that, both functions $s\mapsto\alpha_{s}$ and $s\mapsto\beta_{s}$ are coded in $\M$). 

Then,  let $\psi(r,s,u,v)$ be the following $\mathbb{L}$-formula (where $\mathbb{L}:=\mathbb{L}_{A}\cup\{S\}$):
$$\exists w \overset{\varphi(r,s,u,v,w)}{\overbrace{\left(\begin{array}{c}
	u<v<w\ \wedge \\
\forall i<s \ \forall k\leq r \ (
\langle k,i\rangle\E\alpha_{s}\leftrightarrow S(\bm{\phi}_{k}(u,i))) \ \wedge \\
\forall i<s \ \forall k\leq r \ (
\langle k,i\rangle\E\beta_{s}\leftrightarrow S(\bm{\phi}_{k}(v,w,i))) \ \wedge \\
\forall i<s \ \forall k\leq r \ (S(\bm{\phi}_{k}(u,v,i))
\leftrightarrow S(\bm{\phi}_{k}(u,w,i)))
\end{array}\right)}}. $$

Now, our plan is to define  $\mathbb{L}_{A}$-terms $l_{r}(s,x)$ and   definable $\M$-finite subsets $E_{r,s}$  for all $r,s\in I$ such that, intuitively, the $x$-th element of $E_{r,s}$ is $l_{r}(s,x)$, and $l_{r}(s,x)$ is the least element $v$ of $\M$ such that: (1)  $(v,i)$ realizes 'almost' the same type as the type of $(a,i)$ in $\M$ for all $i<s$, (2) for every $y<x$ if $u$  is the $y$-th element of   $E_{r,s}$ then $\psi(r,s,u,v)$ holds, and (3) for every $i<s$, for each $r'< r$, and for every $y\leq x$ if $ u$ is the $y$-th element of  $E_{r',i}$ then  $\psi(m,i,u,v)$ holds. To be more precise, we inductively define  $l_{n}(s,x)$  as follows:
\begin{center}
$l_{0}(0,0):=\mu_{z} \ (\langle 0,0\rangle\E\alpha_{0}\leftrightarrow \bm{\phi}_{0}(z,0)) ,$ 

$l_{0}(0,x+1):=\mu_{z} \ (\langle 0,0\rangle\E\alpha_{0}\leftrightarrow \bm{\phi}_{0}(z,0) \ \wedge \ 
\forall y\leq x \ \psi(0,0,l_{0}(0,y),z))
;$ 

$l_{r+1}(s+1,0):=\mu_{z} \left(\begin{array}{c}
	\forall i\leq s+1 \ \forall k\leq r+1 \ (
	\langle k,i\rangle\E\alpha_{s+1}\leftrightarrow \bm{\phi}_{k}(z,i)) \ \wedge \\
	\forall i<s+1 \ \forall k\leq r \ \psi(k,i,l_{k}(i,0),z) 
\end{array}\right),$\\

$l_{r+1}(s+1,x+1):=\mu_{z} \left(\begin{array}{c}
	\forall i\leq s+1 \ \forall k\leq r+1 \ (
	\langle k,i\rangle\E\alpha_{s+1}\leftrightarrow \bm{\phi}_{k}(z,i)) \ \wedge \\
	\forall y\leq x \ \psi(r+1,s+1,l_{r+1}(s+1,y),z) \ \wedge \\
	\forall i<s \ \forall y\leq x+1 \ \forall k\leq r \ \psi(k,i,l_{k}(i,y),z) 
\end{array}\right).$
\end{center}
Then, for every  $r,s\in I$  , let $E_{r,s}$ be the $\M$-finite subset of $M$ whose $x$-th element is $l_{r}(s,x)$. Moreover, put $\kappa_{r,s}:=\mathrm{Card}(E_{n,s})$. Note that for every $n\in\NN$ and for  each $s\in I$, $l_{n}(s,x)$  is an $\mathbb{L}_{A}$-term. So similar to \cite{sho2} or \cite{k}, we can prove that:
\begin{center}
$(1): \ \ \ $ $\kappa_{n,s}>J$ for every $n\in \NN$ and each $s\in I$.
\end{center} 

Moreover, for every $n\in\NN$, we put:
$$j_{n}:=\max\{j:\ \forall \bar{x} \ \bigwedge_{k=0}^{n} \bm{\phi}_{k}(\bar{x},a)\leftrightarrow\bm{\phi}_{k}(\bar{x},b)\}, $$
$$J:=\inf\{j_{n}: \ n\in\omega\}. $$
By \cite[Lemma 2.7]{k}, we have $I'\subsetneqq J$. As a result:
\begin{center}
$(2): \ \ \ $ There exists some $\textbf{r}\in J\setminus I'$ such that $g(\textbf{r})<\textbf{r}$.
\end{center}  

Now, for every $s\in M$, we define the following recursive type:
$$p_{s}(x):=\{\forall i<s \ \forall z<\textbf{r} \ ((x)_{\langle n,i,z\rangle}=l_{n}(i,z)): \ n\in\omega\}. $$
We need to find some $e>I$ such that $p_{e}(x)$ is finitely satisfiable. For this purpose, define:
$$\Psi(j):=\max\{s: \ \exists x \ \forall k<j \ \forall i<s \ \forall z<\textbf{r} \ (x)_{\langle k,i,z\rangle}=l_{k}(i,z)\}. $$
Let $e>I$ be the witness of the strength of $I$ for  $\Psi$ in $\M$. It is easy to see that $p_{e}(x)$ is finitely satisfiable. As a result,   we can find some $c\in M$ such that $(c)_{\langle n,i,z\rangle}=l_{n}(i,z)$ for all $n\in\NN$, for every $i\in I$, and all $z< \textbf{r}$. 
Now, we define:
$$\Phi(j):=\max\{s: \ \forall i,i'<s \ \forall z<\textbf{r} \ (i\leq i'\leq s\rightarrow \forall k\leq j \ \psi(k,i,(c)_{\langle k,i',z\rangle},(c)_{\langle j,s,\textbf{r}\rangle}))\} .$$
Let $e'>I$ be the witness of the strength of $I$ for $\Phi $ in $\M$. Therefore, by  statement (1)  and by the way we defined $l_{n}(s,x)$, for every $n\in\NN$ it holds that:\\

$(3): \ \ \ $$\M\models\forall k\leq n \ \Phi(k)>e'.$\\

So, by using Overspill principle over $\NN$ in $\M$, from statement $(3)$, we infer that there exists some nonstandard  $n^{*}\in I$ such that:\\

$(4): \ \ \ $$\M\models\forall k\leq n^{*} \ \Phi(k)>e'.$\\

Moreover, let $c':=g(c)$. Again, we define:
$$\Upsilon(j)=:\max\{s: \ \forall i<s \ \forall z<\textbf{r} \ \forall k\leq j \ (c)_{\langle k,i,z\rangle}=(c')_{\langle k,i,z\rangle}\} .$$
Let $e''>I$ be the witness of the strength of $I$ for $\Upsilon $ in $\M$. Therefore, since $\textbf{r}\in J$,  for every $n\in\NN$ it holds that:\\

$(5): \ \ \ $$\M\models\forall k\leq n \ \Upsilon(k)>e''.$\\

So, by using Overspill principle over $\NN$ in $\M$, from statement $(5)$, we infer that there exists some nonstandard  $n^{**}\in I$ such that:\\

$(6): \ \ \ $$\M\models\forall k\leq n^{**} \ \Upsilon(k)>e''.$\\

Therefore, by choosing some nonstandard element $n^{+}\leq\min\{n^{*},n^{**}\}$ such that ${g(n^{+})\leq n^{+}}$, and choosing some $\textbf{s}>I$ such that $\textbf{s}<\Phi(n^{+})$, $\textbf{s}<\Upsilon(n^{+})$, and $g(\textbf{s})\leq\textbf{s}$, we put:
\begin{center}
${\textbf{v}:=(c)_{\langle n^{+},\textbf{s},\textbf{r}\rangle}}$ and $\textbf{u}:=g(\textbf{v})$.
\end{center}  
Statement (2), together with the choice of $\mathbf{s}$ and $n^{+}$, guarantees that $\mathbf{u} < \mathbf{v}$. In order to find the desired  $ \textbf{w}$, we define another recursive type for every $s\in M$:
$$q_{s}(w):=\{\forall i<s \ \varphi(n,i,\textbf{u},\textbf{v},w): \ n\in\omega\}. $$ 
Similar to the previous arguments, by using the strength of $I$ we can find some $s>I$ such that $ q_{s}(w)$ is finitely satisfiable, and
this finishes the proof of Lemma 4.5.1.
\end{proof}
\newtheoremstyle{nonum}{}{}{\itshape}{}{\bfseries}{.}{ }{#1 }
\theoremstyle{nonum}
\newtheorem{lemmm}{Lemma 4.5.2}
\newtheoremstyle{nonum}{}{}{\itshape}{}{\bfseries}{.}{ }{#1 }
\theoremstyle{nonum}
\begin{lemmm}
	Suppose $g,h\in\G_{(I)}$, $\I(h)\subseteq\I(g)$, $a,b\in M$, and $g(a)=b$. Then there exist some $f_{1},f_{2}\in\G_{(I)}$ such that: 
	\begin{center}
		$h^{-f_{1}}h^{f_{2}}(a)=b$ or 	$h^{f_{1}}h^{-f_{2}}(a)=b$.
	\end{center}
\end{lemmm}
\begin{proof}[Sketch of proof]
Lemma 4.5.2 follows from Lemma 4.5.1 in the following way (the following argument is analogous to the argument presented in the proof of \cite[Theorem 4.5]{sho2}): 

Suppose there are arbitrary small $\textbf{x}>I':=\I(h)$ such that $h(\textbf{x})<\textbf{x}$. we will show  that this leads the first case of the lemma (if instead there are arbitrary small $\textbf{x}>I':=\I(h)$ such that $h(\textbf{x})>\textbf{x}$, a similar argument leads to the second case of the lemma). Moreover, let $J'$ be the largest invariant cut in $\G_{(I)}$ which is closed under exponentiation and is contained in $I'$.

First, suppose $a<b$.  If   $I'=J'$, put $h':=h$ and $f:=id$. If $J'\subsetneqq I'$, by part (3) of this theorem
 the closure of $\G_{(>J')} $ in $\G_{(I)}$ is $\G_{(J')}$. Then there exist some $h'\in \G_{(>J')}$  and some $f\in\G_{(I)}$ such that $h'(a)=b$ and $h'^{f}\in\G_{(I')}$. Then, in both cases by applying Lemma 4.5.1 for $h$, $h'^{f} $, $f^{-1}(a)$, and $f^{-1}(b)$, we will find some $\textbf{u},\textbf{v},\textbf{w}\in M$ such that:
\begin{center}
	$ \textbf{u}<\textbf{v}<\textbf{w}$, $h(\textbf{v})=\textbf{u}$, and\\ $\tp(\textbf{u},\textbf{v},i)=\tp(\textbf{u},\textbf{w},i)$ and $\tp(\textbf{v},\textbf{w},i)=\tp(f^{-1}(a),f^{-1}(b),i)$ for all $i\in I$.
\end{center} 

Therefore, there exist some $\sigma,\rho\in\G_{(I)} $ such that $\sigma(\textbf{u},\textbf{v})=(\textbf{u},\textbf{w})$ and $\rho(\textbf{v},\textbf{w})=(f^{-1}(a),f^{-1}(b))$. Then, put $f_{1}:=\sigma^{-1}\rho^{-1}f^{-1}$ and $f_{2}:=\rho^{-1}f^{-1}$.

Moreover, if $b<a$, consider $h$, $h'^{(-f)}$, $f^{-1}(b)$, and $f^{-1}(a)$ in Lemma 4.5.1.
\end{proof}
\end{proof}
Before turning to the proof of Theorem 4.4, let us first consider the following corollary, which follows from Theorem 4.5:
\begin{cor}
suppose $\M$ and $\N$ are countable recursively saturated models of $\pa$ sharing a  cut $I$ which is strong in both and $I\prec\M$. Moreover, let $(\A(\M))_{(I)}$ be isomorphic to $((\A(\N))_{(I)}$---as topological groups. Then $I\prec\N$. In particular, we have:
\begin{itemize}
\item[(1)] The assumption of being isomorphic of $(\A(\M))_{(I)}$ and $(\A(\N))_{(I)}$ can be reduced to being isomorphic as two groups. 
\item[(2)] If $\ssy(\M)=\ssy(\N)$, then $\M$ is isomorphic to $\N$.
\end{itemize}
\end{cor}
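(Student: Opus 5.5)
The plan is to find a property of the topological group $\G_{(I)}$ that holds exactly when $I\prec\M$. Since topological isomorphism preserves it, $I\prec\N$ will follow. The candidate property is:
\begin{center}
($\ast$) the only closed normal subgroups of $\G_{(I)}$ are $\{id\}$ and $\G_{(I)}$ itself.
\end{center}

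First I reduce elementarity to a definability condition: for a cut $I$ of a model of $\pa$, $I\prec\M$ iff $\KK(\M;I)=I$. If $I\prec\M$ and $c$ is defined by $\varphi(x,\bar i)$ with $\bar i\in I$, then $\exists! x\,\varphi(x,\bar i)$ holds in $I$, so its witness lies in $I$ and $c\in I$. Conversely, if $\KK(\M;I)=I$, the Tarski--Vaught test holds, because a least witness $\mu_x\,\varphi(x,\bar i)$ is definable from $\bar i$.

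Next, assume $I$ is strong and $I\prec\M$; I show ($\ast$). Let $\mathrm N$ be a closed normal subgroup of $\G_{(I)}$. By Theorem 4.5(4), $\mathrm N=\G_{(\I(\mathrm N))}$, where $J:=\I(\mathrm N)\supseteq I$ is invariant in $\G_{(I)}$ and closed under exponentiation. If $I\subsetneqq J\subsetneqq M$, then Theorem 4.5(1) applies with $\KK(\M;I)=I$. On one hand $J\cap I=I$ is not cofinal in $J$. On the other hand $(M\setminus J)\cap I=\emptyset$. This contradicts invariance, so $J=I$ (giving $\mathrm N=\G_{(I)}$) or $J=M$ (giving $\mathrm N$ trivial).

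Conversely, suppose $I$ is strong but $I\not\prec\M$, and pick $c\in\KK(\M;I)\setminus I$. I set $J:=\sup\{2^c_n: n\in\omega\}$. This cut is closed under exponentiation. It is a proper cut, since by recursive saturation some element lies above all $2^c_n$. The elements $2^c_n$ lie in $\KK(\M;I)$ and are cofinal in $J$, so $J$ is invariant in $\G_{(I)}$ by Theorem 4.5(1), and $\G_{(J)}$ is a closed normal subgroup of $\G_{(I)}$ by Theorem 4.5(2). It remains to show $\G_{(J)}$ is proper and nontrivial.
\begin{itemize}
\item[(a)] It is proper: a Lascar $I$-generic automorphism exists by Theorem 3.6. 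By Lemma 4.1 it has $\I(g)=I\subsetneqq J$, so it moves some element of $J$ and lies outside $\G_{(J)}$.
\item[(b)] It is nontrivial: $J$ is not of the form $\inf\{\log^n(a):n\in\omega\}$, since $\log^n$ of elements above $J$ stay above $J$. By Theorem 4.5(3), the closure of $\G_{(>J)}$ is $\G_{(J)}$, and $\G_{(>J)}$ is nontrivial by the standard Kaye/Kotlarski construction of automorphisms of a countable recursively saturated model fixing a proper initial segment (here using Smory\'nski expandability and overspill as in Theorem 3.4).
\end{itemize}
So ($\ast$) fails. I expect (b) to be the main obstacle, since it needs a careful appeal to the existence of such automorphisms.

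Now I conclude. A topological isomorphism $\G_{(I)}^{\M}\cong\G_{(I)}^{\N}$ maps closed normal subgroups bijectively onto closed normal subgroups, so ($\ast$) transfers from $\M$ to $\N$. Since $I$ is strong in $\N$ as well, the previous two paragraphs applied to $\N$ give $I\prec\N$.

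For (1), both groups have the SIP by Theorem A. An abstract isomorphism therefore maps open subgroups, which are exactly the subgroups of index $<2^{\aleph_0}$, onto open subgroups. Hence it is a homeomorphism, because open subgroups form a neighbourhood base at the identity.

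For (2), $I\prec\M$ and $I\prec\N$ give $\mathrm{Th}(\M)=\mathrm{Th}(I)=\mathrm{Th}(\N)$. Combined with $\ssy(\M)=\ssy(\N)$, Theorem 2.1 (Smory\'nski) yields $\M\cong\N$.
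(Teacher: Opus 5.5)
Your proposal is correct and is essentially the paper's argument. The paper also uses Theorem 4.5 to see that $I\prec\M$ leaves $(\A(\M))_{(I)}$ with no closed normal subgroups other than the trivial one and the whole group, while $I\subsetneqq\KK(\N;I)$ produces a nontrivial proper one; it derives (1) from Theorem A and (2) from Theorem 2.1, and you supply the details it leaves implicit.

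One small simplification: in (b) you do not need Theorem 4.5(3), nor your brief argument that $J$ is not of the form $\inf\{\log^n(a):n\in\omega\}$ (as written that argument only gives $J\subseteq\inf\{\log^n(a):n\in\omega\}$ and would need an overspill step to rule out equality). Since $\G_{(>J)}\subseteq\G_{(J)}$ by definition, any nontrivial automorphism $g$ with $\I(g)\supsetneqq J$ already shows $\G_{(J)}\neq\{id\}$.
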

\begin{proof}
Suppose not; i.e $I$ is not an elementary submodel of $\N$. So, $I\subsetneqq\KK(\N;I)$. Thus, by Theorem 4.5, there exists some nontrivial closed normal subgroups of  $(\A(\N))_{(I)}$. But since $(\A(\N))_{(I)}\cong(\A(\M))_{(I)} $ and $I\prec\M$ this leads to a contradiction. Part (1) of the theorem is inferred from Theorem A, and part (2) is an implication of Theorem 2.1.
\end{proof} 

Now, we are ready to see the proof of Theorem 4.4:
\begin{proof}[Proof of Theorem 4.4]
	If $\emptyset\neq\mathrm{N}$ is open in $\G_{(I)}$, then it is also closed in $\G_{(I)}$. Then by Theorem 4.5, $\mathrm{N}=\G_{(\I(\mathrm{N}))}$.  As a result, $\G_{(\I(\mathrm{N}))} $ is open in $\G_{(I)}$, which implies that  $\G_{(I)}\subseteq\mathrm{N}$.

If $\mathrm{N}$ is not open in $\G_{(I)}$, then we will show that $\mathrm{N}\cap\LG_{I}=\emptyset$. Suppose not; i.e., there exists some $g\in\mathrm{N}\cap\LG_{I}$. Let $\K$ be some $I$-small elementary submodel of $\M$ such that $g\upharpoonright_{K}$ is $I$-e.c.  Then, by Theorem 3.6 and Lemma 2.8  there exists some Lascar $I$-generic automorphism $h\in \G_{(K)}\setminus\mathrm{N}$. But this is in contradiction with Theorem 3.5 and normality of $\mathrm{N}$.
\end{proof} 
\begin{q}
	Suppose $\M$ is a countable recursively saturated model of $\pa$, $\G:=\A(\M)$, and $I$ is a strong cut of $\M$. Moreover,	let $g\in\G_{(I)}\setminus\LG_{I}$ such that $\I(g)=I$ (such $g$ exists by Corollary 4.2). What is the normal subgroup generated by its conjugacy class in $\G$? 
\end{q}
 The next corollary concerns the homomorphic images of $(\A(\M))_{(I)}$; a result that will be important in investigating the action of $(\A(\M))_{(I)}$ on trees (see Question 4 at the end of the paper for more details).
\begin{cor}
	Suppose $\M$ is a countable and recursively saturated model of $\pa$, ${\G:=\A(\M)}$, and $I$ is a strong cut of $\M$. Then the infinite cyclic group $\mathbb{Z}$ is not a homomorphic image of $\G_{(I)}$.
\end{cor}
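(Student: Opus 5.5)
Suppose, towards a contradiction, that $\pi:\G_{(I)}\to\mathbb{Z}$ is a surjective homomorphism. Its kernel $\mathrm{N}:=\ker\pi$ is a normal subgroup of $\G_{(I)}$ of countable index, and it is proper because $\mathbb{Z}$ is nontrivial. The plan is to play the small index property (Theorem A) against the dichotomy of Theorem 4.4, and then use abelianness of $\mathbb{Z}$ to finish.

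First, since $\G_{(I)}$ is Polish and has cardinality $2^{\aleph_{0}}$, and $[\G_{(I)}:\mathrm{N}]=\aleph_{0}<2^{\aleph_{0}}$, Theorem A shows that $\mathrm{N}$ is open in $\G_{(I)}$. (Alternatively, Lemma 3.1 shows that $\mathrm{N}$ is nonmeagre, and by Theorem 4.4 that already forces $\mathrm{N}=\G_{(I)}$.) Next we apply the first part of the proof of Theorem 4.4: an open subgroup is closed, so Theorem 4.5(4) gives $\mathrm{N}=\G_{(\I(\mathrm{N}))}$. This subgroup is open and contains $\G_{(I)}$, so $\mathrm{N}=\G_{(I)}$. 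That contradicts $\pi$ being onto a nontrivial group.

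The cleanest route is to avoid the SIP and go directly through meagreness. A countable-index subgroup cannot be meagre, because countably many cosets of a meagre set cover only a meagre set, while $\G_{(I)}$ is a Baire space. So Theorem 4.4 forces $\mathrm{N}=\G_{(I)}$, and therefore $\pi$ is trivial, which is again a contradiction. The same argument in fact shows that $\G_{(I)}$ has no nontrivial countable quotient at all.

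The main obstacle is checking that the statement of Theorem 4.4 really covers arbitrary, possibly nonclosed, normal subgroups. This holds: its "in particular" clause states that every nontrivial (that is, proper) normal subgroup is meagre. Once that is granted, the only remaining work is the Baire-category observation that countable index is incompatible with meagreness.
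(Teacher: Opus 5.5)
Your final route is essentially the paper's proof: the kernel has countable index, so it is not meagre, and Theorem 4.4 then forces it to be all of $\G_{(I)}$, which contradicts surjectivity. The paper gets non-meagreness by citing Lemma 3.1 (a meagre subgroup has index $2^{\aleph_{0}}$), whereas your direct Baire-category argument (countably many meagre cosets cannot cover the Polish group $\G_{(I)}$) is more elementary and suffices here; the detour through the small index property is correct but unnecessary.
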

\begin{proof}
	Suppose not; i.e. there exists some homomorphism $\Phi$ such from $\G_{(I)}$ onto $\mathbb{Z}$. So ${[\G_{(I)}:\mathrm{ker}(\Phi)]=|\mathbb{Z}|}$.   As a result,  by Lemma 3.1, $\mathrm{ker}(\Phi) $ is not meagre in $\G_{(I)}$. Thus, from Theorem 4.4 we infer that $\mathrm{ker}(\Phi)=\G_{(I)}$, which  contradicts the assumption that $\Phi$ is onto.
\end{proof}

\section{The cofinality of $(\A(\M))_{(I)}$}
As mentioned in the Introduction, the cofinality of a group $\G$, denoted by $\mathrm{cf}(\G)$, is the least cardinal number $\kappa$ such that $\G$ is the union of an increasing chain of $\kappa$ many proper subgroups. In \cite{ks}, Kossak and Schmerl proved that \textit{for a countable recursively saturated model $\M$ of $\pa$ the standard cut is strong in $\M$ iff $\mathrm{cf}(\A(\M))>\aleph_{0}$.} In this this section, using similar arguments,  we generalize their result  to a given cut which is not $\omega$-coded from above:

Suppose $\M$ is a  model of $\pa$ and $I$ is a cut of $\M$. We say  $I$  is \textit{$\omega$-coded from above} if there  exists some $a\in M$ such that ${I=\inf\{(a)_{n}: \ n\in\omega\}}$. 
	It is easy to see that if $\M$ is recursively saturated and $I$ is not $\omega$-coded from above in $\M$, then for every $a\in M\setminus\mathrm{K}(\M;I)$, there exists some $b\in M $ such that $a\neq b$ and $\tp(a,i)=\tp(b,i)$ for all $i\in I$. In particular, $(\A(\M))_{(I)}\neq(\A(\M))_{(I\cup\{a\})} $. 

\begin{lem}
Suppose $\M$ is a countable and recursively saturated model of $\pa$, ${\G:=\A(\M)}$, and $I$ is a cut of $\M$ which is not $\omega$-coded from above. Then $I$ is strong in $\M$ iff for every open subgroup  $\mathrm{H}$ of $\G_{(I)}$ there exists some $g\in \G_{(I)}$ such that $\G_{(I)}=\langle \mathrm{H}\cup\{g\}\rangle$.
\end{lem}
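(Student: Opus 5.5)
The plan is to follow the Kossak--Schmerl argument for the standard cut from \cite{ks}. Throughout, $\G_{(I)}$ is read as the automorphism group of the expansion $\M^{*}$, and the tools of Section 3 (strength, Lemma 2.2, satisfaction classes, Overspill over $I$) replace arithmetic saturation. The hypothesis that $I$ is not $\omega$-coded from above is used in both directions. It guarantees that every $a\notin\KK(\M;I)$ has a distinct conjugate over $I$, so open subgroups of the form $\G_{(I\cup\{\bar a\})}$ are proper. It also guarantees that types over $I$ can be manipulated by recursive saturation as in Theorem 4.5(1).

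For ($\Rightarrow$), assume $I$ is strong and let $\mathrm{H}$ be open. Then $\mathrm{H}\supseteq\G_{(I\cup\{\bar a\})}\supseteq\G_{(K)}$, where $\K:=\KK(\M;I\cup\{\bar a\})=\{(\alpha)_i:\ i\in I\}$ is $I$-small by Lemma 2.2(1). So it suffices to find $g\in\G_{(I)}$ with $\G_{(I)}=\langle\G_{(K)}\cup\{g\}\rangle$.

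I will look for an $I$-small $\LL=\{(\gamma)_i:\ i\in I\}$, with $\tp(\gamma,i)=\tp(\alpha,i)$ for all $i\in I$, that is ``$I$-generic over $K$'' in the following sense. For every $\alpha'$ with $\tp(\alpha',i)=\tp(\alpha,i)$ for all $i\in I$, the pairs $(\alpha,\gamma)$ and $(\alpha',\gamma)$ should have the same type over $I$.

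Given such $\LL$, I take $g\in\G_{(I)}$ with $g(\alpha)=\gamma$, which exists by back-and-forth over $I$. Then $\G_{(L)}=g\G_{(K)}g^{-1}\subseteq\langle\G_{(K)}\cup\{g\}\rangle$. For arbitrary $f\in\G_{(I)}$, genericity gives $l\in\G_{(L)}$ with $l(\alpha)=f(\alpha)$. Hence $l^{-1}f\in\G_{(K)}$, and so $f=l\,(l^{-1}f)\in\langle\G_{(K)}\cup\{g\}\rangle$.

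I expect the construction of $\gamma$ to be the main obstacle. The requirement quantifies over all conjugates $\alpha'$, which form an uncountable family of types in general. I would attack it as in the proofs of Theorems 3.4 and 3.6. Fix a recursively saturated $(\M;S)$ and write down the recursive type
\[
p_s(y):=\{\forall i<s\ (\bm{\phi}_r(\alpha,y,i)\leftrightarrow \theta_r(y,i))\}_{r},
\]
where $\theta_r(y,i)$ is a formula saying that $\bm{\phi}_r(x,y,i)$ holds for every, equivalently some, $x$ with the same type over $I$ as $\alpha$ up to level $s$. I would then apply strength of $I$ to the definable function ``the largest $s$ up to which level $m$ is consistent''. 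This yields a witness $e>I$ such that $p_e$ is finitely satisfiable, and an Overspill argument over $\NN$ and $I$ realizes $p_e$. If the ``for every $x$'' formulation is too strong, I would fall back on the weaker, three-step version $f\in\G_{(K)}\G_{(L)}\G_{(K)}$. For that version, $\gamma$ only has to be chosen generic over each conjugate in a single back-and-forth, and this is closer to what the strength argument naturally produces.

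For ($\Leftarrow$), suppose $I$ is not strong. I will produce an open $\mathrm{H}=\G_{(I\cup\{a\})}$ and an increasing chain $\mathrm{H}\subseteq\mathrm{G}_0\subseteq\mathrm{G}_1\subseteq\cdots$ of proper subgroups whose union is $\G_{(I)}$. Since any $g$ then lies in some $\mathrm{G}_n$, we get $\langle\mathrm{H}\cup\{g\}\rangle\subseteq\mathrm{G}_n\neq\G_{(I)}$.

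To build the chain, take a coded $f$ witnessing non-strength, meaning that $\{i\in I: f(i)\in I\}$ is not separated from $\{i\in I: f(i)>I\}$ by any $e>I$. Following \cite{ks}, I let $a$ code $f$ together with a suitable sequence of parameters. I then define $\mathrm{G}_n$ as the set of $g\in\G_{(I)}$ whose action on the relevant $I$-indexed data of $a$ is controlled at ``depth $n$'', via the coded approximations of levels. Non-strength is exactly what makes every $g$ fall into some $\mathrm{G}_n$. Overspill, combined with the fact that $I$ is not $\omega$-coded from above, is what makes each $\mathrm{G}_n$ proper.
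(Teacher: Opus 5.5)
Your ($\Rightarrow$) direction has a genuine gap. Write $x\equiv_I y$ for $\tp(x,i)=\tp(y,i)$ for all $i\in I$.

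\textbf{The primary condition cannot hold.} Your ``$I$-generic over $K$'' condition fails whenever $\mathrm{H}$ is proper. Since $\gamma\equiv_I\alpha$, you may take $\alpha'=\gamma$. This gives $(\alpha,\gamma)\equiv_I(\gamma,\gamma)$, and the formula $x=y$ then forces $\alpha=\gamma$. Consequently every $I$-conjugate of $\alpha$ equals $\alpha$, so $\alpha\in\KK(\M;I)$ by the remark before the lemma, and $\mathrm{H}=\G_{(I)}$. In group terms: taking $f:=g$ would require some $l\in\G_{(I\cup\{\gamma\})}$ with $l(\alpha)=\gamma$. So no decomposition $\G_{(I)}=\G_{(L)}\G_{(K)}$ with $L$ an $I$-conjugate of $K$ exists, and no choice of $\theta_r$ rescues $p_s(y)$; the ``every'' and ``some'' readings already disagree on $x=y$.

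\textbf{The fallback lacks a construction.} Your fallback $\G_{(I)}=\G_{(K)}\G_{(L)}\G_{(K)}$ has the right shape. Unwound, it is exactly what the paper establishes: for every $c\equiv_I a$ there is $d$ with $(a,b)\equiv_I(a,d)\equiv_I(c,d)$. Then $f=l\,(l^{-1}hl)\,l^{-1}\,(h^{-1}f)$, where $h(a,d)=(c,d)$ and $l(a,b)=(a,d)$. But you give no construction of such a $b$. ``Generic over each conjugate in a single back-and-forth'' misses the real difficulty: $b$ is fixed before $c$ is given and must serve every $I$-conjugate $c$ at once.

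\textbf{The missing idea is the choice of $b$.} The paper takes $b\equiv_I a$ with $\tp(b/I\cup\{a\})$ finitely satisfiable in $\KK(\M;I)=\{(\delta)_j: j\in I\}$. That is, every $\bm{\phi}_r(a,y,i)$ with $i\in I$ that holds for all $y\in\KK(\M;I)$ also holds for $y=b$. Strength is applied to $\Phi(\langle r,i\rangle)=\max\{s:\forall j<s\ S(\bm{\phi}_r(a,(\delta)_j,i))\}$. This yields $e>I$ turning ``holds on all of $\KK(\M;I)$'' into ``$\Phi>e$'', which can be written into a recursive type whose finite satisfiability is again checked via strength. Now, $I$-definable elements cannot separate $a$ from an $I$-conjugate $c$. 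So if the type saying that $(a,b)$, $(a,x)$, $(c,x)$ agree on all formulas with parameters below $s$ were inconsistent, you would get a coded condition (with parameter $\nu\in I$). That condition would be satisfied by $(a,x)$ for every $x\in\KK(\M;I)$, hence by $(a,b)$, which is a contradiction.

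\textbf{The ($\Leftarrow$) direction.} Your chain strategy differs from the paper's direct argument, which shows $\langle\G_{(I\cup\{a\})}\cup\{g\}\rangle\le\G_{(I\cup\{(a)_{i_0}\})}$ for the given $g$. Your strategy is viable and would even give countable cofinality outright, but ``controlled at depth $n$'' is not yet a definition. Concretely, after the paper's modification of $a$, choose $i_0,i_1,\ldots\in I$ with each $(a)_{i_n}\notin\KK(\M;I)$, decreasing, and coinitial in $M\setminus I$. Let $\mathrm{G}_n$ be the pointwise stabilizer in $\G_{(I)}$ of $\{(a)_{i_m}: m\ge n\}$. The statement $\forall i<s\,((a)_i<s\rightarrow(a)_i=(g(a))_i)$ holds for all $s\in I$. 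Overspill applied to it, together with coinitiality, shows that each $g$ fixes a tail, so $g$ lies in some $\mathrm{G}_n$. Properness of $\mathrm{G}_n$ comes from $(a)_{i_n}\notin\KK(\M;I)$ plus non-$\omega$-codedness, not from Overspill as you state.
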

\begin{proof}
 Let $S$ be an inductive satisfaction class for $\M$ such that $(\M;S)$ is also recursively saturated. First, suppose $I$ is strong in $\M$, and $\mathrm{H}$ is an open subgroup  of $\G_{(I)}$. Thus, there exists some $a\in M$ such that $\G_{(I\cup\{a\})}\leq \mathrm{H}$. It suffices to find some $b\in M$ such that $\tp(a,i)=\tp(b,i)$ for all $i\in I$ and $\G_{(I)}=\langle\G_{(I\cup\{a\})}\cup\G_{(I\cup\{b\})}\rangle$ (since then we can find some  $g\in \G_{(I)}$ such that $g(b)=a$. So $g^{-1}\G_{(I\cup\{a\})}g=\G_{(I\cup\{b\})}$, which implies that  $\G_{(I)}=\langle \mathrm{H}\cup\{g\}\rangle$).\\
  In order to find such $b$, first let $\delta\in M$ such that 
$\mathrm{K}(\M;I)=\{(\delta)_{i}: \ i\in I\}$. Then, we
define:
$$\Phi( \langle r,i\rangle):=\max\{s: \ \forall j<s \  S(\bm{\phi}_{r}(a,(\delta)_{j},i))\}. $$
By using the strength of $I$ in $\M$, there exists some $e>I$ such that   $\Phi(\langle r,i\rangle)\in I$ iff $\langle r,i\rangle<e$ for all $\langle r,i\rangle\in I$. Then, for every $s\in M$ let:
\begin{align*}
p_{s}(x):=&\{\forall i<s \ \phi(a,i)\leftrightarrow\phi(x,i): \ \phi \text{ is an }\mathbb{L}_{A}\text{-formula}\}\cup\\&
\{ \forall i<s \ \Phi(\langle\ulcorner\phi\urcorner,i\rangle)>e\rightarrow   \phi(a,x,i)):\ \phi \text{ is an }\mathbb{L}_{A}\text{-formula}\}.
\end{align*}
 Our aim is to find some $s>I$ such that $p_{s}(x)$ is finitely satisfiable. For this purpose, we define:
$$\Psi(r):=\max\left\lbrace s: \ \exists x  \left(\begin{array}{c}
\forall k<r \ \forall i<s \ S(\bm{\phi}_{r}(a,i))\leftrightarrow S(\bm{\phi}_{r}(x,i)))\ \wedge \\ \forall k<r \ \forall i<s \  (\Phi(\langle r,i\rangle)>e\rightarrow S(\bm{\phi}_{r}(a,x,i)))
\end{array}\right)\right\rbrace . $$
Again, let $s_{0}>I$ which witnesses the strength of $I$ in $\M$ for $\Psi$. Therefore, $p_{s_{0}}(x)$ is finitely satisfiable: since otherwise, there exists some  $n\in\omega$ such that $\Psi(n)<s_{0}$. Let $i_{0}:=\Psi(n)+1\in I$. So it holds that:\\

$(1): \ \ \ \M\models\forall x 
  \left(\begin{array}{c} \bigwedge_{k=0}^{n} \forall i<i_{0} (\bm{\phi}_{n}(a,i)\leftrightarrow\bm{\phi}_{n}(x,i))\rightarrow \\
\bigvee_{k= 0}^{n}\exists i<i_{0} \ (\Phi(k,i)>e \ \wedge \ \neg\bm{\phi}_{k}(a,x,i))
\end{array}\right)$.\\

Then, let $\alpha\in I$ be the code of the set $\{\langle k,i\rangle\in I: \ \M\models k\leq n \ \wedge \ i<i_{0} \ \wedge \ \bm{\phi}_{n}(a,i)\}$. Moreover, let $\beta:=\mu_{x} \bigwedge_{k=0}^{n} (\forall i<i_{0} (\langle k,i\rangle\E\alpha\leftrightarrow\bm{\phi}_{n}(x,i))\in \mathrm{K}(\M;I)$.  As a result, statement (1) implies that:\\

$(2): \ \ \ \M\models 
\bigvee_{k=0}^{n}  
	\exists i<i_{0} \ (\Phi(k,i)>e \ \wedge \ \neg\bm{\phi}_{k}(a,\beta,i))
$.\\

But statement (2) is in contradiction by the definition of $\Phi$.

Therefore, let $b\in M$ realizes $p_{s_{0}}(x)$. In order to see $\G_{(I)}=\langle\G_{(I\cup\{a\})}\cup\G_{(I\cup\{b\})}\rangle$, let $f\in\G_{(I)}$ be arbitrary and $f(a)=c$. Again, for every $s\in M$, we define:
$$q_{s}(x):=\{\forall i<s \ \phi(a,b,i)\leftrightarrow\phi(a,x,i)\leftrightarrow\phi(c,x,i): \ \phi\text{ is an }\mathbb{L}_{A}\text{-formula}\}. $$
Then, put:
$$\Omega(r):=\max\{s: \ \exists x \ \forall k<r \ \forall i<s \ (S(\bm{\phi}_{k}(a,b,i))\leftrightarrow S(\bm{\phi}_{k}(a,x,i))\leftrightarrow S(\bm{\phi}_{k}(c,x,i)))\} .$$
Again, let $s_{1}>I$ be the witness of strength of $I$ in $\M$ for $\Omega$. So $p_{s_{1}}(x)$ is finitely satisfiable: since if $\Omega(n)<s_{1}$ for some $n\in \omega$, then let $i_{1}:=\Omega(n)+1\in I$. As a result, it holds that:\\

$(3): \ \ \ \M\models\forall x \ \exists i<i_{1} \ \neg\bigwedge_{k=0}^{n}(\bm{\phi}_{k}(a,b,i)\leftrightarrow\bm{\phi}_{k}(a,x,i)\leftrightarrow\bm{\phi}_{k}(c,x,i))$.\\

Then, let $\nu\in I$ be the code of the set $\{\langle k,i\rangle\in I: \ \M\models k\leq n \ \wedge \ i<i_{1} \ \wedge \ \bm{\phi}_{n}(a,b,i)\}$.   Since $\tp(a,i)=\tp(c,i)$ for all $i\in I$, from statement (3) we infer that for every $\textbf{x}\in K(\M;I)$ it holds that:

$(4): \ \ \ \M\models \overset{\varphi(a,\textbf{x},i_{1},\nu,)}{\overbrace{\exists i<i_{1}  \neg\bigwedge_{k=0}^{n}(\langle k,i\rangle\E\nu\leftrightarrow\bm{\phi}_{k}(a,\textbf{x},i))}}$.\\

Thus,  by the way we chose $b$, statement (4) implies that $\M\models\varphi(a,b,i_{1},\nu)$, which is a contradiction.

 Finally, let $d\in M$ realizes $p_{s_{1}}(x)$ in $\M$, and $h,l\in\G_{(I)}$ such that $h(a,d)=(c,d)$ and $l(a,b)=(a,d)$. So we have: $$f=hh^{-1}f=\underset{\in \G_{(I\cup\{a\})}}{\underbrace{l}}\overset{\in \G_{(I\cup\{b\})}}{\overbrace{l^{-1}hl}}\underset{\in \G_{(I\cup\{a\})}}{\underbrace{l^{-1}}}\overset{\in \G_{(I\cup\{a\})}}{\overbrace{h^{-1}f}}.$$

For proving the right-to-left direction of the lemma, we assume that $I$ is not strong in $\M$; i.e. there exists some $a\in M$ such that $\{(a)_{i}:\ i\in I\}$ is downward cofinal in  ${M\setminus I}$. Without loss of generality, we can assume that $\{(a)_{i}:\ i\in I\}\cap (M\setminus\mathrm{K}(\M;I)) $ is downward cofinal in $M\setminus I$ (since otherwise, we can consider $\{(a')_{i}: \ i\in I\}$, where ${(a')_{i}:=\min(M\setminus\{\textbf{t}_{r}(j)\leq(a)_{i}: \ \langle r,j\rangle<(a)_{i})\}})$). Now, by the  assumption on the  right side of the lemma, there exists some $g\in\G_{(I)}$ such that $\G_{(I)}=\langle\G_{(I\cup\{a\})}\cup\{g\}\rangle$. Let $g(a)=b$. As a result, for every $s\in I$ it holds that:\\

$ (\ast):\ \ \  \M\models\forall i<s \ ((a)_{i}<s\rightarrow (a)_{i}=(b)_{i}) $.\\

So, by using overspill over $I$ in statement $(\ast)$, there exists some $\textbf{s}>I$ such that:\\

$(\ast\ast): \ \ \ \M\models\forall i<\textbf{s} \ ((a)_{i}<\textbf{s}\rightarrow (a)_{i}=(b)_{i}) $.\\

  Now, let  $i_{0}\in I$ such that $I<(a)_{i_{0}}<\textbf{s}$ and $(a)_{i_{0}}\notin\mathrm{K}(\M;I)$. So by $(\ast\ast)$, we have $\G_{(I)}=\langle\G_{(I\cup\{a\})}\cup\{g\}\rangle\leq\G_{(I\cup\{(a)_{i_{0}}\})}$, which is a contradiction (since $I$ is not $\omega$-coded from above).
\end{proof}
\begin{rem}
We can refine the proof of the right-to-left direction of the lemma to obtain the following generalization: \textit{If for every open subgroup $\mathrm{H}$ of $\G_{(I)}$ there exists a finite number of elements of $\G_{(I)}$, say $g_{1},...,g_{n}$ such that $\G_{(I)}=\langle\mathrm{H}\cup\{g_{1},...,g_{n}\}\rangle$, then $I$ is strong in $\M$.} 
\end{rem}
\begin{theorem}[or \textbf{Theorem C}]
Suppose $\M$ is a countable and recursively saturated model of $\pa$, $\G:=\A(\M)$, and $I$ is a cut of $\M$  which is not $\omega$-coded from above. Then $I$ is strong in $\M$ iff $\mathrm{cf}((\A(\M))_{(I)})>\aleph_{0}$.
\end{theorem}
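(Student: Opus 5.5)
We prove the two directions separately, following Kossak--Schmerl \cite{ks}; Lemma 5.1 carries most of the work.

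\emph{Strong $\Rightarrow$ uncountable cofinality.} Assume $I$ is strong and, for a contradiction, that $\G_{(I)}=\bigcup_{n\in\omega}\mathrm{H}_{n}$ with $\mathrm{H}_{0}\leq\mathrm{H}_{1}\leq\cdots$ an increasing chain of proper subgroups. By the Baire category theorem some $\mathrm{H}_{n}$ is non-meagre in the Polish group $\G_{(I)}$. We then need some $\mathrm{H}_{m}$ to be open. There are two routes.

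\begin{itemize}
\item[(a)] By Pettis' theorem, $\mathrm{H}_{n}$ would be open if it had the Baire property; but an arbitrary subgroup need not.
\item[(b)] Instead we use the small index property (Theorem A). Each $\mathrm{H}_{n}$ is proper and the union is countable, so we cannot bound the index of $\mathrm{H}_{n}$ directly. Following \cite{ks}, we pass to a \emph{generic} sequence. By Corollary 3.8 and Lemma 3.2, for every $k$ we choose a tuple $(g_{0},\dots,g_{k})\in\mathcal{X}$. Countably many open dense conditions are met, so a comeagre set of countable sequences of elements of $\G_{(I)}$ is covered. Some $\mathrm{H}_{n}$ is then non-meagre in a way that makes it contain a Lascar $I$-generic tuple together with all its extensions.
\end{itemize}

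We follow route (b) and then argue as in Theorem 3.3. If no $\mathrm{H}_{n}$ is open, we use Lemma 3.2(2) repeatedly inside each coset to build a binary tree of conjugating maps $h_{s}$. This yields $2^{\aleph_{0}}$ distinct cosets of some $\mathrm{H}_{n}$. We then show this is incompatible with the chain covering $\G_{(I)}$: each limit $h_{\sigma}$ lies in some $\mathrm{H}_{m}$, and a pigeonhole/fusion argument over $\sigma$ gives the contradiction. This step is the main obstacle; the cleanest fix may be to run the fusion directly against the chain $(\mathrm{H}_{m})$, handling one $m$ at stage $m$.

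Once some $\mathrm{H}_{m}$ is open, Lemma 5.1 gives $g\in\G_{(I)}$ with $\G_{(I)}=\langle\mathrm{H}_{m}\cup\{g\}\rangle$. Since $g\in\mathrm{H}_{k}$ for some $k\ge m$, we get $\G_{(I)}=\mathrm{H}_{k}$, contradicting properness.

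\emph{Uncountable cofinality $\Rightarrow$ strong.} We prove the contrapositive. Suppose $I$ is not strong. By Lemma 5.1 there is an open subgroup $\mathrm{H}\supseteq\G_{(I\cup\{a\})}$ such that $\langle\mathrm{H}\cup\{g\}\rangle\neq\G_{(I)}$ for every $g$. By Remark 5.2 this holds even for finitely many added elements. Since $\M$ is countable, enumerate $\G_{(I)}$ modulo $\mathrm{H}$ via the countably many cosets of $\G_{(I\cup\{a\})}$; choose representatives $f_{0},f_{1},\dots$ of all of these cosets. Put
$$\mathrm{H}_{n}:=\langle\mathrm{H}\cup\{f_{0},\dots,f_{n}\}\rangle.$$
Each $\mathrm{H}_{n}$ is proper by Remark 5.2. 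The sequence is increasing, and its union contains every coset, hence equals $\G_{(I)}$. So $\mathrm{cf}(\G_{(I)})=\aleph_{0}$. The hypothesis that $I$ is not $\omega$-coded from above is used only through Lemma 5.1.
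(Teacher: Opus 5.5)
Your second direction is correct and is the paper's argument in contrapositive form: Remark 2 gives an open $\mathrm{H}$ over which $\G_{(I)}$ is not finitely generated, and the countably many cosets of $\mathrm{H}$ give the chain.

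The gap is in the first direction, exactly where you flag it. Nothing in your plan shows that some $\mathrm{H}_m$ is open.

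\begin{itemize}
\item Producing $2^{\aleph_0}$ cosets of some $\mathrm{H}_n$ contradicts nothing. No bound on $[\G_{(I)}:\mathrm{H}_n]$ is available, and Theorem A only speaks about small index.
\item The claim that some $\mathrm{H}_n$ ``contains a Lascar $I$-generic tuple together with all its extensions'' is unjustified.
\item Copying the paper's tree will not close the gap either. The paper uses Lemma 5.1 to see that no $\mathrm{H}_n$ is open, picks $h_{s^{\smallfrown}1}\in h_s\G_{(M_s)}\setminus h_{s^{\smallfrown}0}\mathrm{H}_n$, and pigeonholes. That condition does not survive to the limits: $h_\sigma^{-1}h_\rho=a^{-1}\bigl(h_{s^{\smallfrown}0}^{-1}h_{s^{\smallfrown}1}\bigr)b$ with $a,b\in\G_{(M_{s^{\smallfrown}0})}$ uncontrolled.
\item In fact no argument using only non-openness of the $\mathrm{H}_n$ can work. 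The group $(\mathbb{Z}/2\mathbb{Z})^{\omega}$ is a non-archimedean Polish group. Being an $\mathbb{F}_{2}$-vector space of dimension $2^{\aleph_0}$, it is the union of a strictly increasing countable chain of proper subgroups. None of these is open, since an open subgroup of a compact group has finite index.
\end{itemize}

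The missing idea is to freeze \emph{conjugates} rather than cosets, as in the proof of Theorem 3.3, using the Lascar generic system of Corollary 3.8. Here is one way to run it.

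\begin{itemize}
\item Suppose no $\mathrm{H}_n$ is open, and fix an enumeration $(a_k)$ of $M$.
\item By condition (3), choose $g_0,g_1,\dots$ with $(g_0,\dots,g_k)\in\mathcal{X}$ for all $k$. Say $g_k\in\mathrm{H}_{n_k}$, and put $m_k:=\max(n_k,k)$.
\item Given $h_k$, take $K_k\in\mathcal{A}$ from condition (7) for $(g_0^{h_k},\dots,g_k^{h_k})$, containing $K_{k-1}$, $a_k$ and $h_k^{-1}(a_k)$.
\item By Lemma 3.2(2) (Lemma 2.8 if $k=0$), pick $f'\in g_k^{h_k}\G_{(K_k)}\setminus\mathrm{H}_{m_k}$ with $(g_0^{h_k},\dots,g_{k-1}^{h_k},f')\in\mathcal{X}$.
\item By condition (9) (Theorem 3.5), get $c_k\in\G_{(K_k)}$ with $g_j^{h_kc_k}=g_j^{h_k}$ for $j<k$ and $g_k^{h_kc_k}=f'$. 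Set $h_{k+1}:=h_kc_k$.
\end{itemize}

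Then $h:=\lim h_k\in\G_{(I)}$ exists, and $g_k^{h}=g_k^{h_{k+1}}\notin\mathrm{H}_{m_k}$ for every $k$. But $h\in\mathrm{H}_m$ for some $m$. For $k\geq m$, both $h$ and $g_k$ lie in $\mathrm{H}_{m_k}$, so $g_k^{h}\in\mathrm{H}_{m_k}$, a contradiction. Hence some $\mathrm{H}_m$ is open, and your final appeal to Lemma 5.1 finishes the proof.
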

\begin{proof}
	First suppose $I $ is strong in $\M$ and $\G_{(I)}=\bigcup_{n\in\omega}\mathrm{H}_{n}$ such that $\{\mathrm{H}_{n}: \ n\in\omega\}$ is an increasing sequence of proper subgroups of $\G_{(I)}$. So by the previous lemma, none of $\mathrm{H}_{n}$s is open in $\G_{(I)}$. Now, we will inductively build sequences $\langle \M_{s}: \ s\in 2^{<\omega}\rangle$ of  $I$-small submodels of $\M$ and $\langle h_{s}: \ s\in 2^{<\omega} \rangle$ of elements of $\G_{(I)}$ such that:
	\begin{itemize}
	\item[(1)] $M=\bigcup_{n\in\omega} M_{\sigma\upharpoonright_{n}}$ for every $\sigma\in 2^{\omega}$.
	\item[(2)] For every $s,t\in 2^{<\omega}$ if $s\sqsubset_{e}t$, then $M_{s}\subseteq M_{t}$.
	\item[(3)] For every $s,t,t'\in 2^{<\omega}$ if $s\sqsubset_{e}t$ and $s\sqsubset_{e}t'$, $h_{t}\upharpoonright_{M_{s}}=h_{t'}\upharpoonright_{M_{s}}$.
	\item[(4)] For every $n\in\omega$ and for all $s\in 2^{<\omega}$, if $\mathrm{len}(s)=n$ then $h_{s\frown 1}\notin h_{s\frown 0}\mathrm{H}_{n}$.
	\end{itemize}
	For constructing these sequences, we first fix some enumeration $\{a_{n}: \ n\in\omega\}$ of $M$. For the first step of the induction, let $\M_{\emptyset}$ be an $I$-small submodel of $\M$ containing $a_{0}$, and put $h_{\emptyset}=h_{0}=id$. Since $\mathrm{H}_{0}$ is not open in $\G_{(I)}$, then there exists some $h_{1}\in h_{0}\G_{(M_{\emptyset})}\setminus h_{0}\mathrm{H}_{0}$. So this finishes the first step of induction. Suppose $n\in\omega$ is given and we have built the sequences for  $s\in 2^{<\omega} $ such that $\mathrm{len}(s)= n$. First, let $\M_{s\frown 0}=\M_{s\frown 1}$ be an $I $-small submodel of $M$ containing $a_{n+1}$ and $M_{s}$, and let $h_{s\frown 0}=h_{s}$. Again, since $\mathrm{H}_{n}$ is not open in $\G_{(I)}$, there exists some $h_{s\frown 1}\in h_{s}\G_{(M_{s})}\setminus h_{s\frown 0}\mathrm{H}_{n}$. So we are done with the instruction of required sequences.

	Now, for every $\sigma\in 2^{\omega}$, let $h_{\sigma}:=\lim\limits_{n\rightarrow\infty} h_{\sigma\upharpoonright_{n}}	$ (by conditions (1) and (2) of our construction this limit exists). Moreover, by conditions (3) and (4), $h_{\sigma}\neq h_{\rho}$ for every distinct $\sigma,\rho\in 2^{\omega}$. As a result, since $\G_{(I)}=\bigcup_{n\in\omega}\mathrm{H_{n}}$, there exists some $n\in \omega$ which contains uncountably many of $h_{\sigma}$s where $\sigma\in 2^{\omega}$. Therefore, there exist some $ \sigma,\rho\in 2^{\omega}$ and some $m\geq n$ such that $h_{\sigma},h_{\rho}\in \mathrm{H}_{n}$ $\sigma\upharpoonright_{m}=\rho\upharpoonright_{m}$ and $\sigma(m)\neq\rho(m)$. However,  by conditions (3) and (4), it holds that $h_{\sigma}\notin h_{\rho}\mathrm{H}_{m}$ or $h_{\rho}\notin h_{\sigma}\mathrm{H}_{m}$, which is a contradiction (since $\mathrm{H}_{n}\leq\mathrm{H}_{m}$).

	For proving the other direction, suppose $\mathrm{cf}((\A(\M))_{(I)})>\aleph_{0}$. Let $\mathrm{H}$ be an open subset of $\G_{(I)}$. By Remark 2, it suffices to prove that  $\G_{(I)}$ is finitely generated over $\mathrm{H}$. Since $\mathrm{H}$ is open in $\G_{(I)}$, we have $[\G_{(I)}:\mathrm{H}]\leq\aleph_{0}$ (since $\G_{(I)}$ is Polish).  Let  $\{g_{n}: \ n\in \omega\}$ be a sequence of elements of $\G_{(I)}$ such that $\G_{(I)}=\bigcup_{n\in\omega}g_{n}\mathrm{H}$. As a result, $\G_{(I)}=\bigcup_{n\in\omega}\langle \mathrm{H}\cup\{g_{0},...,g_{n}\}\rangle$. Therefore, since $\mathrm{cf}((\A(\M))_{(I)})>\aleph_{0}$, there exists some $ n\in \omega$ such that $\G_{(I)}=\langle \mathrm{H}\cup\{g_{0},...,g_{n}\}\rangle$. 
\end{proof}

\begin{cor}
Suppose $\M$ and $\N$  are countable and recursively saturated model of $\pa$,  $I$ is strong cut of $\M$, and  $J\subseteq_{e}\N$ which is not $\omega$-coded from above. If   $(\A(\M))_{(I)}\cong(\A(\N))_{(J)} $ (as groups), then $J$ is strong in $\N$. Moreover, if $I\prec_{e}\M$, then $J\prec_{e}\N$.
\end{cor}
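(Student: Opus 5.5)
The plan is to obtain the first claim from Theorem C, and the second from Theorem 4.5 together with the Galois correspondence for closed normal subgroups, using Theorem A to make the isomorphism topological.

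For the first claim, fix a group isomorphism $\Theta:(\A(\M))_{(I)}\to(\A(\N))_{(J)}$. Cofinality is a purely group-theoretic invariant: a chain of proper subgroups covering one group is carried by $\Theta$ to such a chain covering the other. Since $I$ is strong in $\M$, and a strong cut of a recursively saturated model is not $\omega$-coded from above (a coded sequence $(a)_n$ whose infimum is $I$ would contradict strength), Theorem C applied to $\M$ gives $\mathrm{cf}((\A(\M))_{(I)})>\aleph_0$. Hence $\mathrm{cf}((\A(\N))_{(J)})>\aleph_0$. Because $J$ is not $\omega$-coded from above by hypothesis, the converse direction of Theorem C, applied in $\N$, yields that $J$ is strong in $\N$.

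For the second claim, first upgrade $\Theta$ to a homeomorphism. By Theorem A, both $(\A(\M))_{(I)}$ and $(\A(\N))_{(J)}$ have the SIP. I will then argue in the standard way that the open subgroups are exactly those of index $<2^{\aleph_0}$, so that $\Theta$ preserves open subgroups and is therefore a topological isomorphism. One direction is immediate: in a Polish group an open subgroup has countable index. So open subgroups are precisely the subgroups of small index, and $\Theta$ preserves them. Consequently $\Theta$ also preserves closed normal subgroups.

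Next, assume $I\prec_e\M$. I expect the intended reading is $I\prec\M$, as in Corollary 4.6, so that $\KK(\M;I)=I$. I would show that $(\A(\M))_{(I)}$ has no nontrivial closed normal subgroup. By Theorem 4.5(4), any closed normal $\mathrm{N}$ equals $\G_{(\I(\mathrm{N}))}$, where $\I(\mathrm{N})$ is a cut closed under exponentiation and invariant in $\G_{(I)}$. By Theorem 4.5(1), that invariance requires $\KK(\M;I)=I$ to be cofinal in the cut or downward cofinal above it. This forces $\I(\mathrm{N})=I$ or $\I(\mathrm{N})=M$, so $\mathrm{N}$ is trivial. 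Transferring along $\Theta$, $(\A(\N))_{(J)}$ has no nontrivial closed normal subgroups either.

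Now suppose $J\not\prec\N$, so $J\subsetneqq\KK(\N;J)$. Pick $c\in\KK(\N;J)\setminus J$, and let $J'$ be the least cut closed under exponentiation containing $J\cup\{c\}$. Then $\KK(\N;J)\cap J'$ is cofinal in $J'$, so by Theorem 4.5(1) the cut $J'$ is invariant in $\G_{(J)}$. By Theorem 4.5(2), $\G_{(J')}$ is a closed normal subgroup of $\G_{(J)}$. It is proper and nontrivial because $J$ is not $\omega$-coded from above, so some automorphism moves an element above $J$. It is also nontrivial because $J'\neq M$ and $\N$ is recursively saturated. This contradicts the previous paragraph. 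The main obstacle is this last step: checking that $J'$ is a proper cut and that $\G_{(J')}\neq\G_{(J)}$, which requires the non-$\omega$-coded hypothesis and possibly a careful choice of $c$ (for instance, taking $J'$ to be the cut generated by a suitable $2^{c}_n$-sequence).
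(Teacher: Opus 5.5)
Your route is the paper's: the first claim comes from Theorem C, and the second from rerunning the proof of Corollary 4.6 for two different cuts. Theorem A makes $\Theta$ a homeomorphism, and Theorem 4.5 shows that $(\A(\M))_{(I)}$ has no nontrivial closed normal subgroup when $\KK(\M;I)=I$, while $(\A(\N))_{(J)}$ would have one if $J\not\prec\N$. Your remark that a strong cut cannot be $\omega$-coded from above is exactly what is needed to apply Theorem C on the $\M$ side; the paper leaves this implicit.

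The only step that does not go through as written is the one you flag. Knowing that some automorphism in $(\A(\N))_{(J)}$ moves an element above $J$ does not give $(\A(\N))_{(J')}\neq(\A(\N))_{(J)}$, because that element may lie above $J'$. You need a moved element inside $J'\setminus J$.

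Use the strength of $J$ from your first part, exactly as in the proof of Corollary 4.2. By Lemma 2.2(1), $\KK(\N;J)=\{(\delta)_i: i\in J\}$ for some $\delta$. By Theorem 2.3 there is then $f\in\A(\N)$ with $\fix(f)=\KK(\N;J)$, so $f$ fixes $J$ pointwise. Strength applied to $i\mapsto(\delta)_i$ gives $e>J$ such that no element of $\KK(\N;J)$ lies strictly between $J$ and $e$. Any $a\notin J$ with $a<\min(c,e)$ therefore lies in $J'\setminus J$ and is moved by $f$, so $f\notin(\A(\N))_{(J')}$. No special choice of $c$ is needed.

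The other nontriviality, $(\A(\N))_{(J')}\neq\{\mathrm{id}\}$, is the standard fact (Smory\'{n}ski) that every proper cut closed under exponentiation in a countable recursively saturated model is $\I(g)$ for some automorphism $g$. Properness of $J'$ is immediate from recursive saturation.
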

\begin{proof}
The first part of the corollary is a direct consequence of Theorem 5.2. The second part is deduced from Corollary 4.6.
\end{proof}

We conclude the paper with the following questions:
\begin{q}
As mentioned in the Introduction, Lascar introduced his notion of generic automorphisms based on Truss's notion of a generic automorphism; that is, an automorphism whose class of conjugacy is comeagre. More generally, several results established in this paper, such as Theorems A and B, hold when $\G$ is a Polish group possessing a comeagre conjugacy class. 

Now, suppose $\M$ is a countable recursively saturated model of $\pa$, $\G:=\A(\M)$, and $I$ is a cut of $\M$.  We ask under what conditions the subgroup $\G_{(I)}$ has an element $g$ such that $[g]^{\G_{(I)}}$ is comeagre in $\G_{(I)}$. In \cite{sch}, Schmerl proved the following result in the case $I$ is equal to the standard cut:
\begin{theorem}[Schmerl \cite{sch}]
Suppose $\M$ is a countable recursively saturated model of $\pa$ in which $\NN$ is strong. 
\begin{itemize}
\item[(1)] If $\M\models\mathrm{Th}(\NN)$, then there exists an automorphism of $\M$ whose conjugacy class is comeagre in $\A(\M)$.
\item[(2)] If $\M\nvDash\mathrm{Th}(\NN)$, then $\M$ has an automorphism  whose conjugacy class is comeagre in $\A(\M)$  iff $\M\models\mathrm{H}(n,4)$ for some $n\in\NN$ (where in general, $ \mathrm{H}(x,y)$ is the statement stating "\textit{For any two digraphs $D_{1}$ and $D_{2}$ who have an $x$-coloring, their product digraph $D_{1}\times D_{2}$ has a $y$-coloring}"). 
\end{itemize}
\end{theorem}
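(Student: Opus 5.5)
The plan is to reduce the existence of a comeagre conjugacy class to an amalgamation property for e.c. automorphisms of small submodels, using the Lascar generic machinery for $I=\NN$, and then to analyse that property arithmetically.

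\textbf{Step 1 (reduction).} Since $\NN$ is strong, Theorem 2.7 (equivalently Theorem 3.6 with $I=\NN$) makes $\LG$ comeagre in $\G$. Theorem 3.5 with $I=\NN$ shows that two Lascar generics agreeing on a small $\K$ where the restriction is e.c. are conjugate. Hence, inside $\LG$, every conjugacy class is relatively open. A comeagre class therefore exists iff conjugation is topologically transitive on $\G$. Concretely, this means a joint embedding property: for any small $\K_1,\K_2\prec_s\M$ and e.c. $g_1\in\A_{\NN}^{\M}(\K_1)$, $g_2\in\A_{\NN}^{\M}(\K_2)$, there are $h\in\G$ and a single Lascar generic $f$ with $g_1\subseteq f$ and $g_2\subseteq f^{h}$. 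This is the Kechris--Rosendal form of Truss's criterion. Both implications follow from the Baire category theorem and the density statements of Theorems 3.4 and 3.6.

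\textbf{Step 2 (arithmetic translation).} I would translate the joint embedding property into a statement about types. An e.c. $g$ on $\K=\{(\alpha)_i:i\in\NN\}$ is determined up to the relevant equivalence by the complete type $\tp(\alpha,g(\alpha))$. Joint embedding of $\tp(\alpha,g_1\alpha)$ and $\tp(\beta,g_2\beta)$ then amounts to finite satisfiability of a recursive-in-$\ssy(\M)$ type in variables $x,y,x',y'$. This type says that $(x,y)$ realises the first type, $(x',y')$ realises the second type up to conjugacy, and that the map $x\mapsto y$ is coherent with $x'\mapsto y'$ on all definable terms. By arithmetic saturation it is realisable iff each finite fragment is consistent with $\mathrm{Th}(\M)$. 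Each finite fragment is a first-order statement about two definable partial functions (the "digraphs" $x\to g_1(x)$, $x\to g_2(x)$) admitting a common extension. I expect this reduces to the colourability of the product digraph. The local obstruction to merging two fixed-point-free definable dynamics is that each admits a definable finite colouring separating $x$ from $g(x)$, while their common extension must colour the product.

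\textbf{Step 3 (cases).} For (1), if $\M\models\mathrm{Th}(\NN)$, every such finite fragment is a true arithmetical sentence about finite digraphs. Truth in $\NN$ holds by the compactness argument that finite pieces can always be merged, and it transfers to $\M$, so joint embedding holds. For (2), I would show that joint embedding holds iff for some standard $n$ the product of $n$-colourable digraphs is $4$-colourable in $\M$, that is, $\M\models\mathrm{H}(n,4)$. Here $4$ is the bound that arises from the standard fact that definable fixed-point-free maps in $\pa$ admit $3$-colourings, plus one colour for fixed points. Conversely, failure of every $\mathrm{H}(n,4)$ lets one build, via overspill, two e.c. automorphisms whose types cannot be jointly realised.

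The main obstacle is Step 2/3(2): extracting exactly the combinatorial principle $\mathrm{H}(n,4)$ from the consistency of the amalgamation types. I would first try to mimic the counting arguments from the proof of Lemma 4.5.1, coding colourings by elements of $\ssy(\M)$, and I expect most of the work to lie there.
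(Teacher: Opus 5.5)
The paper gives no proof of this statement; it only cites Schmerl and remarks that Hajnal's theorem is used essentially. So I am judging your sketch on its own. Your Step 1 is sound. By Theorems 2.7 and 3.5 (with $I=\NN$), conjugacy classes of Lascar generics are relatively open in the comeagre set $\LG$. Hence a comeagre class exists iff joint embedding holds: for all $a_1\equiv b_1$ and $a_2\equiv b_2$ there is $(c,d)$ with $\tp(c,d)=\tp(a_2,b_2)$ and $\tp(a_1,c)=\tp(b_1,d)$. The genuine gap is in Steps 2--3. They contain no argument, and the heuristic in Step 2 points the wrong way. If $\theta\in\tp(a,b)$ with $\tp(a)=\tp(b)$, the $\emptyset$-definable digraph $D_\theta=\{(x,y):\M\models\theta(x,y)\}$ has \emph{no} $k$-colouring in $\M$ for any standard $k$. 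Otherwise the leftmost branch of the tree of colourings (a definable K\"onig argument) would be a parameter-free $k$-colouring separating $a$ from $b$. So nothing ``separates $x$ from $g(x)$''. Instead, an unsatisfiable fragment of your type yields $\theta_1\in\tp(a_1,b_1)$, $\theta_2\in\tp(a_2,b_2)$ and formulas $\phi_1,\dots,\phi_k$ such that $(x,x')\mapsto(\phi_j(x,x'))_{j\le k}$ is a proper $2^k$-colouring of $D_{\theta_1}\times D_{\theta_2}$. Conversely, recursive saturation turns any such pair of digraphs into a failure of joint embedding. The statement you actually need to analyse is therefore: \emph{no two $\emptyset$-definable digraphs, neither $n$-colourable for any standard $n$, have a product that is $m$-colourable for some standard $m$}.

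Neither part of Step 3 proves this. For (1), ``finite pieces can always be merged by compactness'' is not an argument, and compactness cannot supply one. The finitary analogue is exactly the principle $\mathrm{H}$ that part (2) is about. (I read $\mathrm{H}(x,y)$ as: if neither factor is $x$-colourable then the product is not $y$-colourable; the literal wording in the statement is garbled.) What $\M\models\mathrm{Th}(\NN)$ buys is that $D_{\theta_1},D_{\theta_2}$ become arithmetic digraphs on $\NN$ of genuinely infinite chromatic number. You then need Hajnal's infinitary theorem that their product is not finitely colourable. Its proof is short: given a proper $k$-colouring $c$ of $D_1\times D_2$, take an ultrafilter $\mathcal{U}$ on $V(D_1)$ avoiding the ideal of finitely colourable sets, and colour $y$ by the unique $i$ with $\{x:c(x,y)=i\}\in\mathcal{U}$; this $k$-colours $D_2$. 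For (2), the $4$ has nothing to do with $3$-colourings of fixed-point-free maps. The criterion needs $\M\models\mathrm{H}(n,m)$ for \emph{every} standard $m$, because $2^k$ is arbitrary. Passing from a single $\mathrm{H}(n,4)$ to all $m$ is the Poljak--R\"odl arc-digraph bootstrap, formalized in $\pa$. From $\delta(D_1\times D_2)\cong\delta D_1\times\delta D_2$ and $\chi(\delta D)\le k\iff\chi(D)\le\binom{k}{\lfloor k/2\rfloor}$ one gets $\mathrm{H}(n,m)\Rightarrow\mathrm{H}(\binom{n}{\lfloor n/2\rfloor},\binom{m}{\lfloor m/2\rfloor})$, and $\binom{m}{\lfloor m/2\rfloor}>m$ exactly when $m\ge 4$. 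Your converse ``via overspill'' also omits where $\M\not\models\mathrm{Th}(\NN)$ enters. For the product's $4$-colouring to separate \emph{types}, the witnesses to $\neg\mathrm{H}(n^*,4)$ must be parameter-free, so $n^*$ must be a nonstandard $\emptyset$-definable element. Take the maximum of the definable set $\{n:\neg\mathrm{H}(n,4)\}$ if it is bounded, or else any nonstandard element of $\KK(\M)$. Such an element exists precisely because $\M\not\equiv\NN$. With parameters the argument must break, as part (1) shows.
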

Now, it is natural to investigate whether the same results hold for nonstandard strong cuts. In his prove, Schmerl makes essential use of a result by A. Hajnal which appears in \cite{haj}\footnote{Hajnal's result states that \textit{If $D_{1}$ and $D_{2}$ are two digraphs such that $\chi(D_{1}),\chi(D_{2})>n $ for all $n\in\NN$, then $\chi(D_{1}\times D_{2})$ is also larger than  $n $ for all $n\in\NN$}.}. Understanding how much of this result can be recovered for nonstandard cuts may allow one to adapt the techniques developed in this paper for strong cuts and thereby generalize Theorem 5.4 above.
\end{q}

\begin{q}
In \cite{ser},  J. P. Serre's introduced  property $(\mathrm{FA})$ as follows: we say that a group $\G$ \textit{acts without inversion} on a tree $T$ if $\G\leq\mathrm{Aut}(T)$ (as an abstract group) and for no $g\in\G $ there do not exist two adjacent vertices $t,s\in T$ such that $g.t=s$ and $g.s=t$. $\G$ is said to have \textit{property $(\mathrm{FA})$} if whenever $\G$ acts without inversion on a tree $T$, then  there exists some $t\in T$ such that $g.t=t$ for all $g\in \G$. \\
Serre in \cite{ser} showed  that \textit{any group  $\G$ has property $\mathrm{FA}$ iff the following conditions hold: (1) the infinite cyclic group $\mathbb{Z}$ is not a homomorphic image of $\G$, (2) the cofinality of $\G$ is uncountable, and (3) $\G$ is not a non-trivial free product with amalgamation; that is, whenever $\G$ is a free product with amalgamation, say $\G=\G_{1}\ast_{A}\G_{2}$, then $\G=\G_{i}$ for some $i=1,2$}.

In view of Corollary 4.7 and Theorem C, conditions (1) and (2) above hold for $(\A(\M))_{(I)}$, when $I$ is a strong cut of the countable recursively saturated model $\M$ of $\pa$. This raises the question of whether property  $(\mathrm{FA})$ also holds for $(\A(\M))_{(I)} $.

\end{q}

\end{document}